\documentclass[11pt, oneside]{amsart}

\usepackage{tabularx}
\usepackage{amssymb} \usepackage{amsfonts} \usepackage{amsmath}
\usepackage{amsthm} \usepackage{epsfig, subfig}
\usepackage{ amscd, amsxtra, latexsym}
\usepackage[all]{xy}
\usepackage{caption}
\usepackage{enumerate}
\usepackage{color}

\addtolength{\captionmargin}{1cm}

\newtheorem{lemma}{Lemma}[section]
\newtheorem{thm}[lemma]{Theorem}
\newtheorem{prop}[lemma]{Proposition}
\newtheorem{cor}[lemma]{Corollary}
\newtheorem{conj}[lemma]{Conjecture}

\theoremstyle{definition}
\newtheorem{defn}[lemma]{Definition}

\newtheorem{quest}[lemma]{Question}

\newtheorem{rem}[lemma]{Remark}

\theoremstyle{definition}

\definecolor{darkgreen}{cmyk}{1,0,1,.2}

\newcommand{\g} {\ensuremath {\gamma}}

\newcommand{\N}{\ensuremath {\mathbb{N}}}
\newcommand{\R} {\ensuremath {\mathbb{R}}}

\newcommand{\Z} {\ensuremath {\mathbb{Z}}}

\newcommand{\matP} {\ensuremath {\mathbb{P}}}

\newcommand{\calC} {\ensuremath {\mathcal{C}}}

\newcommand{\calP} {\ensuremath {\mathcal{P}}}

\newcommand{\calM} {\ensuremath {\mathcal{M}}}
\newcommand{\calH} {\ensuremath {\mathcal{H}}}

\newcommand{\calA} {\ensuremath {\mathcal{A}}}

\newcommand{\calI} {\ensuremath {\mathcal{I}}}

\newcommand{\calJ} {\ensuremath {\mathcal{J}}}

\newcommand{\calPS} {\ensuremath {\mathcal{PS}}}

\newcommand{\tilM}{\ensuremath{{\widetilde{M}}}}
\newcommand{\tilN}{\ensuremath{{\widetilde{N}}}}

\usepackage{hyperref}

\title{Contracting elements and random walks}
\author[A. Sisto]{Alessandro Sisto}
\address{ETH, Z\"{u}rich, Switzerland}
\email{sisto@math.ethz.ch}

\subjclass[2010]{Primary 20F65; Secondary 60G50}

\begin{document}

\begin{abstract}
We define a new notion of contracting element of a group and we show that contracting elements coincide with
hyperbolic elements in relatively hyperbolic groups, pseudo-Anosovs in mapping class groups, rank one isometries
in groups acting properly on proper $CAT(0)$ spaces, elements acting hyperbolically on the Bass-Serre tree in graph
manifold groups. We also define a related notion of weakly contracting element, and show that those coincide with
hyperbolic elements in groups acting acylindrically on hyperbolic spaces and with iwips in $Out(F_n)$, $n\geq 3$. We show that each weakly contracting element is contained in a hyperbolically embedded elementary subgroup, which allows us to answer a problem in \cite{DGO}.
We prove that any simple random walk in a non-elementary finitely generated subgroup containing
a (weakly) contracting element ends up in a non-(weakly-)contracting element with exponentially decaying probability.
\end{abstract}

\maketitle

\section{Introduction}
A Morse element in a group $G$ is an element $h$ such that $H=\langle h \rangle$ is undistorted in $G$ and any quasi-geodesic with endpoints on $H$ stays within bounded distance from $H$, and the bound depends on the quasi-isometry constants only. Examples of Morse elements include infinite order elements in hyperbolic groups \cite{Gro-hyp}, hyperbolic elements (of infinite order) in relatively hyperbolic groups \cite{DS-treegr}, pseudo-Anosovs in mapping class groups \cite{Be-asgeommcg}, etc. See \cite{DMS-div} for further details and examples. Indeed, in all mentioned cases a stronger property than being Morse holds. Namely, all elements are \emph{contracting} with respect to a suitable collection, called \emph{path system}, of quasi-geodesic paths in the respective groups. Roughly speaking, the main property that a contracting element $g$ satisfies is the existence of a map $\pi_g$ from the group onto $\langle g\rangle$ such that if two points $x,y$ have far away projections then all special paths from $x$ to $y$ pass close to $\pi_g(x)$ and $\pi_g(y)$. (This definition is less general than the one we will give, which is stated in terms of group actions on a metric space.) Related properties have been considered in \cite{BF,Be-asgeommcg,A-K,Si-proj}. These are the examples we can provide, see Section \ref{eg}.

\begin{thm}\label{examples}
 Let $G$ be a relatively hyperbolic group (resp. mapping class group, group acting properly by isometries on a proper $CAT(0)$ space, graph manifold group). Then there exists a path system for $G$ such that $g\in G$ is contracting if and only if $g$ is hyperbolic (resp. is pseudo-Anosov, acts as a rank one isometry, acts hyperbolically on the Bass-Serre tree).
\end{thm}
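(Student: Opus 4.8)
The plan is to treat the four classes separately, in each case exhibiting an explicit path system and proving that the elements contracting with respect to it are exactly the asserted ones. In every case the path system consists of uniform quasi-geodesics that join any pair of points and are compatible with the ambient coarse geometry, and the two implications share a common shape. The implication ``$g$ contracting $\Rightarrow$ $g$ hyperbolic-type'' is the soft one: a non-hyperbolic-type element is, up to a finite power and conjugacy, contained in or coarsely parallel to a ``flat'' piece — a peripheral coset, a reducing-multicurve stabilizer, a flat half-plane, or a Seifert vertex group — inside which there are pairs of points with arbitrarily far-apart $\pi_g$-images joined by special paths avoiding any prescribed neighbourhood of $\langle g\rangle$, contradicting the contracting inequality; finite-order elements are excluded already because $\langle g\rangle$ must be undistorted. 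The reverse implication is the substantive one: one must produce the projection $\pi_g$ onto $\langle g\rangle$ and verify contraction from the known ``strongly hyperbolic'' behaviour of the axis.

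For relatively hyperbolic $G$ I take the path system coming from the geometry of the associated cusped space / combinatorial horoballs — concretely, quasi-geodesics of the Cayley graph with no long backtracking through peripheral cosets, together with the freedom to shortcut inside a coset as a horoball geodesic would. A hyperbolic $g$ has an axis whose image in the coned-off Cayley graph is a genuine quasi-geodesic meeting each peripheral coset boundedly, and the deep-points and distance-formula estimates of Drutu--Sapir and Osin then supply $\pi_g$ and the contracting inequality; conversely a parabolic element sits inside a peripheral coset, where the horoball-type freedom of special paths rules out any contracting projection. For mapping class groups I use the hierarchy paths of Masur--Minsky as path system: uniform quasi-geodesics between any two markings. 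A pseudo-Anosov $g$ has an invariant bi-infinite hierarchy path that is a quasi-geodesic of positive translation length in the curve complex, and the Bounded Geodesic Image theorem together with the Behrstock inequality force any hierarchy path with far-apart $\pi_g$-endpoints to fellow-travel it. A reducible $g$ has a power fixing a multicurve, hence commutes with a Dehn twist (or a complementary mapping class), producing a quasi-flat through $\langle g\rangle$ and so failing contraction.

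For a proper isometric action on a proper $CAT(0)$ space $X$ I pull the $CAT(0)$ geodesics of $X$ back to $G$ through an orbit map: a rank one $g$ has a contracting axis in $X$ by definition, and using properness of the action one transports nearest-point projection to obtain $\pi_g$ and contraction in $G$; a non-rank-one $g$ either fails to act axially — so $\langle g\rangle$ is not even Morse — or has an axis bounding a flat half-plane, an evident flat obstruction. For a graph manifold group I use the graph-of-spaces structure and take the Kapovich--Leeb type paths that are geodesic in each vertex space and cross the Bass--Serre tree without backtracking; an element acting hyperbolically on the tree has an axis crossing infinitely many edge spaces, and since the fibre directions of consecutive Seifert pieces along it are non-commensurable, it behaves like a geodesic in a tree of negatively curved pieces, yielding contraction, while an element fixing a vertex lies in a Seifert vertex group, which contains quasi-flats (the fibre crossed with a geodesic in the base) and hence is not contracting.

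The main obstacle is the mapping class group case: promoting hierarchy paths to a bona fide path system with uniform constants, and squeezing the contracting inequality out of it, requires the full Masur--Minsky subsurface-projection apparatus — the distance formula, Bounded Geodesic Image, the Behrstock inequality, tightness and the large-links argument — and care is needed because hierarchy paths are only coarsely canonical. A secondary subtlety lurks in the $CAT(0)$ case: since contraction quantifies over \emph{all} special paths, one must genuinely forbid paths that ``cut the corner'' past $\pi_g$, which is precisely where properness together with the flat strip and flat half-plane theorems does the work, and where transporting a coarse projection from $X$ to $G$ — the orbit map being far from a quasi-isometry when the action is not cocompact — needs a careful argument.
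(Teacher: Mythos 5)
Your broad plan — a different path system for each class, with the soft direction coming from flats/undistorted considerations and the hard direction from the ambient projection machinery — matches the paper's. But there is a structural misconception running through your write-up, concentrated in the $CAT(0)$ and relatively hyperbolic cases, and it matters.

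In the paper's definition a path system on a group $G$ is a pair $(X,\calPS)$ where $G$ acts properly on a metric space $X$ and $\calPS$ is a collection of uniform quasi-geodesics in $X$; the axis of a contracting element and the projection $\pi_A$ both live in $X$, not in $G$. There is therefore no ``pulling back to $G$'' and no need to worry that the orbit map is far from a quasi-isometry when the action is not cocompact. For a proper action on a proper $CAT(0)$ space the paper takes $X$ itself with its $CAT(0)$ geodesics as $\calPS$; the contraction of a rank-one axis is then exactly \cite[Theorem 5.4]{BF} combined with Lemma~\ref{4.24}, and the only thing one needs from properness is that the orbit of $x_0$ is a quasi-geodesic in $X$, which holds because it stays at bounded distance from a genuine axis. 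Your paragraph spends effort ``transporting nearest-point projection to $G$'' and worrying about corner-cutting paths in $G$, neither of which is a real difficulty once the definitions are set up correctly. Similarly for relatively hyperbolic $G$: the paper does not build a bespoke path system out of quasi-geodesics in the Cayley graph with controlled backtracking; it simply takes $X=Bow(G)$ (the Bowditch/cusped space) and $\calPS$ its geodesics. The argument then collapses to two observations: every geodesic in a hyperbolic space is contracting (Lemma~\ref{qconvcontr}), and a hyperbolic element of $G$ acts hyperbolically on $Bow(G)$ so its orbit is a bi-infinite quasi-geodesic, hence within bounded Hausdorff distance of a geodesic, hence contracting. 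Your proposed Cayley-graph path system would additionally require you to verify the path-system axioms (subpaths, connectivity, uniform constants) and to re-derive contraction from Dru\c{t}u–Sapir/Osin deep-component estimates — all avoidable.

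For mapping class groups your outline is essentially the paper's: hierarchy paths as $\calPS(\calM(S))$, with the projection built via the main geodesic in $C(S)$, the distance formula, and the large-domains / Bounded Geodesic Image bookkeeping, and the non-pA direction via non-Morse (infinite index in the centralizer after a power). For graph manifolds the paper first replaces $\widetilde M$ by the universal cover of a bi-Lipschitz-equivalent flip graph manifold (Kapovich–Leeb) and then uses a specific, already-constructed family of special paths with a strong coherence property on intermediate vertex spaces; this coherence — that two special paths crossing the same deep vertex space agree there — is what actually drives the contraction argument, and your description (geodesic in each vertex space, no backtracking, non-commensurable fibres) gestures at this but doesn't isolate it. So: the MCG and graph-manifold halves are in the spirit of the paper; the $CAT(0)$ and relatively hyperbolic halves rest on the wrong picture of where the path system lives and would need to be redone.
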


Notice that not all graph manifolds are $CAT(0)$ \cite{L,BK}.

We also define a more general notion, that of being \emph{weakly contracting}, which will be sufficient for our applications. This concept is defined using the notion of weak path system described in Definition \ref{def:wps}.

\begin{thm}[Proposition \ref{acyl}, Proposition \ref{outfn}]
\label{examples2}
 Let $G$ be a group acting acylindrically on a hyperbolic space $X$, and let $\calPS$ be the collection of all geodesics of $X$. Then $g\in G$ is weakly contracting for the weak path system $(X,\calPS)$ if and only if it acts hyperbolically on $X$.
\par
There exists a weak path system for $Out(F_n)$, $n\geq 3$, such that $g\in Out(F_n)$ is weakly contracting if and only if it is fully irreducible.
\end{thm}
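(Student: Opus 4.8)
Both statements are established independently, but by a common template: produce a hyperbolic space $X$ carrying an isometric $G$-action, let $\calPS$ consist of (coarsely) the geodesics of $X$, and then argue that the ``contracting projection'' clauses in the definition of a weakly contracting element are a formal consequence of standard $\delta$-hyperbolic geometry, while the remaining ``weakness''/elementary-subgroup clauses come from a structural property of the particular action --- acylindricity in the first part, weak proper discontinuity of iwips in the second. In each part one direction (weakly contracting $\Rightarrow$ the dynamical condition) is soft: if $g$ is weakly contracting then, essentially by definition, the orbit $\langle g\rangle x_0$ is a quasi-geodesic shadowing a quasi-axis of $g$ in $X$, so $g$ has positive stable translation length and hence acts loxodromically; on a hyperbolic space ``loxodromic'' is the same as ``acts hyperbolically'', and for $Out(F_n)$ acting on the free factor complex it is the same as ``fully irreducible'' (see below).

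For Proposition \ref{acyl}, let $g$ act hyperbolically, i.e.\ loxodromically, on the hyperbolic space $X$. Then $\langle g\rangle x_0$ is a quasi-geodesic and lies at finite Hausdorff distance from the bi-infinite geodesic $\gamma$ joining the two fixed points $g^{\pm\infty}\in\partial X$; I would take $\pi_g$ to be the nearest-point projection $X\to\gamma$ composed with the quasi-isometry $\gamma\to\langle g\rangle x_0$. The only geometric input needed is the classical lemma that in a $\delta$-hyperbolic space, if the nearest-point projections of $x$ and $y$ onto a geodesic $\gamma$ are more than, say, $10\delta$ apart, then every geodesic $[x,y]$ passes within $O(\delta)$ of both projections; conjugating $g$ replaces $\gamma$ by $h\gamma$ and $\pi_g$ by $h\pi_g h^{-1}$, so the resulting constants are uniform over the conjugacy class, as required. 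What remains is to verify the ``weak'' clauses, which concern the coarse stabilizer $E(g)=\{h\in G:\ d_{\mathrm{Haus}}(h\langle g\rangle x_0,\ \langle g\rangle x_0)<\infty\}$; here acylindricity is exactly what is needed, via the standard fact that in an acylindrical action the coarse stabilizer of a quasi-axis is virtually cyclic, so that $E(g)$ is virtually $\Z$ and contains $\langle g\rangle$ with finite index.

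For Proposition \ref{outfn} I would take $X$ to be the free factor complex $\mathcal{FF}_n$, which is hyperbolic by Bestvina--Feighn, with $\calPS$ its geodesics and $Out(F_n)$ acting in the usual way, the orbit of a basepoint vertex playing the role of a quasi-axis. The dictionary ``loxodromic on $\mathcal{FF}_n$ $\iff$ fully irreducible'' is used in both directions: if $g$ is not fully irreducible then some power of $g$ fixes the conjugacy class of a proper free factor, hence fixes a vertex of $\mathcal{FF}_n$, so $\langle g\rangle$ has bounded orbit and $g$ is not loxodromic; conversely, fully irreducibles act loxodromically with positive translation length (Bestvina--Feighn, Handel--Mosher). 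Granting this, ``weakly contracting $\Rightarrow$ fully irreducible'' is the soft argument above. For ``fully irreducible $\Rightarrow$ weakly contracting'', the contracting-projection clauses follow from the same $\delta$-hyperbolic lemma applied to a quasi-axis of the loxodromic element $g$, and the substitute for acylindricity is the theorem that iwips act as WPD elements on $\mathcal{FF}_n$ (Bestvina--Feighn), which again forces $E(g)$ to be virtually cyclic and lets one verify the ``weak'' part of the definition.

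The genuinely $\delta$-hyperbolic geometry here is routine; the real work --- and the main obstacle --- is bookkeeping: matching ``acylindrical'' (resp.\ ``iwips are WPD on $\mathcal{FF}_n$'') to the precise, somewhat elaborate list of conditions defining a weak path system and a weakly contracting element, in particular obtaining uniform constants across the conjugacy class of $g$ and pinning down the coarse stabilizer $E(g)$. For the $Out(F_n)$ statement there is the additional preliminary of assembling, with references, the equivalence between loxodromicity on $\mathcal{FF}_n$ and full irreducibility, including the treatment of reducible elements whose invariant proper free factor is merely periodic rather than fixed.
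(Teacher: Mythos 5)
Your proposal is correct and follows essentially the same route as the paper: the paper's proof of the acylindrical case is just the observation that geodesics (hence quasi-geodesics) in a hyperbolic space are contracting (Lemma \ref{qconvcontr}) together with unwinding the definitions, and the $Out(F_n)$ case is the Bestvina--Feighn theorem that the free factor complex is hyperbolic and iwips are precisely the loxodromic/WPD elements. One small imprecision worth flagging: what the definition of ``weakly contracting'' actually requires, beyond the contracting axis, is the WPD property of Definition \ref{def:wps}, not directly that $E(g)$ be virtually cyclic; the latter ($E(g)$ and its properties, Corollary \ref{e(g)}) is a downstream consequence proved in Section~\ref{hypemb:sec}. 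For the acylindrical case, the correct ``remaining check'' is that a loxodromic element of an acylindrical action is WPD, which is standard; for $Out(F_n)$ you correctly invoke WPD directly from \cite{BF2}.
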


The disadvantage of the weak version of contractivity is that it is not as straightforward to show that weakly contracting elements are Morse, even though this is true \cite{Si-hypembqconv}. More importantly, even though we will not go into this here, weakly contracting elements give worse lower bounds on the divergence function.
\par
We will treat all examples mentioned in Theorems \ref{examples} and \ref{examples2} from a common perspective. As it turns out, this approach is closely related to the approach relying on the notion of hyperbolically embedded subgroup developed in \cite{DGO}.

\begin{thm}[Theorem \ref{hypemb}]\label{hypembintro}
 Each weakly contracting element $g$ (for some action) is contained in a hyperbolically embedded elementary subgroup $E(g)$. Viceversa, every infinite order element contained in a virtually cyclic embedded subgroup is weakly contracting for an appropriate action.
\end{thm}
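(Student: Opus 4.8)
The plan is to derive the first assertion from the projection-complex machinery of Bestvina--Bromberg--Fujiwara \cite{BF} together with the criterion of \cite{DGO} that a loxodromic element satisfying weak proper discontinuity has a hyperbolically embedded virtually cyclic subgroup attached to it, and to obtain the converse by using the action of $G$ on the relative Cayley graph as a witnessing action. So suppose first that $g$ is weakly contracting for a weak path system $(X,\calPS)$, with quasi-axis $A_g=\langle g\rangle\cdot o$ and associated projection $\pi_g$, and set $E(g)=\{h\in G:d_{\mathrm{Haus}}(hA_g,A_g)<\infty\}$. The first point to record is that $E(g)$ is virtually cyclic and contains $g$: weak contractivity makes $A_g$ a Morse quasi-geodesic (cf.\ \cite{Si-hypembqconv}), so $E(g)$ acts properly and coboundedly on the quasi-line $A_g$, which forces it to be virtually cyclic.

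For the forward direction, the core step is to show that the family of distinct $G$-translates $\mathbb{Y}=\{hA_g:hE(g)\in G/E(g)\}$ satisfies the projection axioms of \cite{BF}, where the projection of $hA_g$ to $A_g$ is taken to be $\pi_g(hA_g)$. The only axiom that is not formal is that $\mathrm{diam}(\pi_g(hA_g))$ is bounded uniformly over $h\notin E(g)$, and this is exactly where weak contractivity enters: if that projection were long, then $hA_g$, being an unbounded quasi-geodesic approximated by $\calPS$-paths, would be forced into a bounded neighbourhood of $A_g$ near both ends of its projection and then out of it, so that $hA_g$ would coarsely coincide with $A_g$, i.e.\ $h\in E(g)$. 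Granting the axioms, \cite{BF} produces a hyperbolic quasi-tree of metric spaces $\calC_K(\mathbb{Y})$ on which $G$ acts isometrically and which contains an isometrically embedded copy of $A_g$, so $g$ acts loxodromically on $\calC_K(\mathbb{Y})$. To finish one checks weak proper discontinuity: an element moving a long subsegment of the $g$-axis by a bounded amount must, by the structure of the quasi-tree of metric spaces, coarsely stabilise $A_g$ and hence lie in $E(g)$, and since $E(g)$ is virtually cyclic only finitely many such elements move $o$ boundedly. By \cite{DGO}, the maximal elementary subgroup attached to $g$ is then hyperbolically embedded, and it is commensurable with $E(g)$ and contains $g$; this is the desired $E(g)$.

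For the converse, let $H\le G$ be virtually cyclic and hyperbolically embedded, say $H\hookrightarrow_h(G,X)$, and let $g\in H$ have infinite order. By \cite{DGO} the relative Cayley graph $\Gamma:=\Gamma(G,X\sqcup H)$ is hyperbolic, $G$ acts on it by isometries, $H$ is undistorted in $\Gamma$, so $g$ acts loxodromically with weak proper discontinuity, and the maximal elementary subgroup $E_G(g)$ is virtually cyclic and hyperbolically embedded. Take $\calPS$ to be the collection of all geodesics of $\Gamma$; then $(\Gamma,\calPS)$ is a weak path system — geodesics of a geodesic hyperbolic space always form one, as in the acylindrical and $Out(F_n)$ examples underlying Theorem \ref{examples2} — and $g$ is weakly contracting for it: the orbit $\langle g\rangle\cdot o$ is a quasi-geodesic in a hyperbolic space, hence Morse, nearest-point projection onto it is coarsely well defined, contracting, and $\langle g\rangle$-equivariant, and the elementary subgroup associated to it is the virtually cyclic group $E_G(g)$. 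All of these constants are uniform, so Definition \ref{def:wps} and the definition of weak contractivity are satisfied.

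I expect the main obstacle to lie in the forward direction, specifically in verifying the projection axioms of \cite{BF} from the weak form of contractivity: bounding $\mathrm{diam}(\pi_g(hA_g))$ uniformly, and coping with the fact that in the weak setting $X$ need not be hyperbolic and $\calPS$ need not join every pair of points, so that one must argue with the quasi-axes and their $\calPS$-approximations rather than with actual geodesics. Establishing weak proper discontinuity (equivalently, the properness of the relative metric of \cite{DGO}) is a secondary technical point, and it is contingent on first having pinned down that $E(g)$ is genuinely virtually cyclic.
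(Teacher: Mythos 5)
Your overall scaffolding matches the paper: both build the Bestvina--Bromberg--Fujiwara projection complex over the $G$-translates of an axis (equivalently, the cosets of $E(g)$), verify the projection axioms, and then invoke machinery from \cite{DGO} to conclude. Your choice to conclude via the WPD-loxodromic criterion of \cite{DGO} rather than via the relatively-hyperbolic characterisation of Theorem \ref{char} is a harmless variant, and your converse (relative Cayley graph $\Gamma(G,X\sqcup H)$ with $\calPS$ all geodesics) is the same as the paper's.

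There is, however, a genuine gap in the forward direction, and it is exactly the place where the \emph{weak} form of contractivity bites. You assert that $E(g)$ is virtually cyclic because ``weak contractivity makes $A_g$ a Morse quasi-geodesic, so $E(g)$ acts properly and coboundedly on the quasi-line $A_g$.'' In the weak path system setting the $G$-action on $X$ is not assumed proper, so being Morse gives you nothing about properness of the $E(g)$-action on $A_g$: without a discreteness input, the set of $h$ with $d_{\mathrm{Haus}}(hA_g,A_g)<\infty$ could be huge. The same problem infects your verification of Axiom $(0)$: you say that if $\pi_g(hA_g)$ were long then $hA_g$ would have to coarsely coincide with $A_g$, but in a non-proper action a translate $hA_g$ can fellow-travel $A_g$ over an arbitrarily long window and still diverge at the ends, so the projection can be long for $h\notin E(g)$ unless you can rule out infinitely many such $h$. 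Both of these are resolved in the paper by the WPD hypothesis, via Lemma \ref{swpd} and the counting arguments of Lemmas \ref{boundorcobound} and \ref{farcosets}: WPD is the substitute for properness of the action, and it is what bounds the number of translates whose projection to $A$ is long. You flag WPD as ``a secondary technical point, contingent on first having pinned down that $E(g)$ is genuinely virtually cyclic,'' but the logical dependency runs the other way — the WPD property of $g$ in $X$ is what you must use \emph{first}, to establish that $E(g)$ is virtually cyclic and that Axiom $(0)$ holds, before you can set up the BBF complex at all. (Two smaller omissions worth noting: Axioms $(3)$ and $(4)$ are not formal either — $(3)$ is Lemma \ref{dichot} and $(4)$ uses a disjoint-subpaths counting argument — and BBF requires the $\calC(Y)$ to be geodesic, which is why the paper takes $\calC(Y)$ to be a copy of the Cayley graph of $E(g)$ rather than the quasi-line $A_g$ with its subspace metric.)
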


Osin further showed \cite{Os-acyl} that the notion of containing a (nondegenerate) hyperbolically embedded subgroup is equivalent to other notions that had been previously defined \cite{BeFu-wpd,Ha-isomhyp}.

The subgroup $E(g)$ is defined in a natural way in terms of projections, see Corollary \ref{e(g)}. Notice that in view of Theorem \ref{examples} we obtain new examples of hyperbolically embedded subgroups, i.e. $E(g)$ where $g$ acts as a rank one isometry on a proper $CAT(0)$ space, answering a question of Dahmani-Guirardel-Osin.
\par
Our main result is that (weakly) contracting elements are generic, from the random walks viewpoint. Let $G$ be a group and $S\subseteq G$ a finite subset. Let $W_n(S)$ be the set of words of length $n$ in the elements of $S$ and their inverses. A \emph{simple random walk} supported on $\langle S\rangle$ is a sequence of random variables $\{X_n\}$ taking values in $G$ and with laws $\mu_n$ so that for each $g\in G$ and $n\in \N$ we have
$$\mu_n(\{g\})=\frac{|\{w\in W_n(S):w \text{\ represents\ }g\}|}{|W_n(S)|}.$$

\begin{thm}\label{randomwalksintro}
  Let $G$ be a group equipped with a path system (resp. weak path system) and let $H<G$ be a non-elementary finitely generated subgroup containing a contracting (resp. weakly contracting) element. Then the probability that a simple random walk supported on $H$ gives rise to a non-contracting (resp. non-weakly-contracting) element decays exponentially in the length of the random walk.
\end{thm}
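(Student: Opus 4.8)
\emph{Reductions.} The plan is to first assume $\langle S\rangle=H$, so that the increments $X_1,X_1^{-1}X_2,\dots$ are i.i.d.\ uniform on $T:=S\cup S^{-1}$; then a sample of $X_n$ is a concatenation of $n$ independent uniform letters of $T$, blocks occupying disjoint ranges of indices are independent and uniform, and $X_m^{-1}$ has the same law as $X_m$. The goal becomes $\mu_n(\{g: g\ \text{not (weakly) contracting}\})\le C\lambda^{n}$ for some $C>0$, $\lambda\in(0,1)$; ``contracting'' refers throughout to the (weak) path system of the hypothesis.

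\emph{A separated Schottky family.} Next I would extract structure from the given (weakly) contracting $h\in H$: since $h\in E(h)$ (Corollary \ref{e(g)}) and $H$ is non-elementary, there is $k\in H\setminus E(h)$, and then $h$ and $khk^{-1}$ are independent (weakly) contracting elements — their projections see each other's orbits only boundedly. A ping-pong/combination argument for contracting elements, in the spirit of \cite{A-K,Si-proj} and within the present framework, should then produce a free subgroup $\langle a,b\rangle\le H$, a (weak) path system, and elements $p_1,\dots,p_K\in\langle a,b\rangle$ (suitably long and separated, e.g.\ large powers of well-chosen Schottky words) which are (weakly) contracting with \emph{uniform} projection constants and whose quasi-axes $A_1,\dots,A_K$ are pairwise so well separated that a Behrstock-type inequality holds: for every $g\in H$ and $i\ne j$, one of $\pi_{A_i}(g),\pi_{A_j}(g)$ lies within a uniform constant of the (essentially common) basepoint. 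Hence for each $g$ at most one index $i$ has $\pi_{A_i}(g)$ far from the basepoint.

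\emph{The local-to-global criterion (crux).} I would then prove: there is $R$ so that whenever $w\in H$ can be written $w=\alpha\,p_i\,\beta$ and the element $u:=\beta\alpha$ (for which $w$ is conjugate to $p_i u$) has $\pi_{A_i}(u)$ and $\pi_{A_i}(u^{-1})$ both within $R$ of the basepoint of $A_i$, then $w$ is (weakly) contracting; one builds $\pi_w$ from the $\langle w\rangle$-translates of $\pi_{p_i}$, the hypotheses ensuring that the bi-infinite periodic path associated with $w$ does not backtrack at the junctions between consecutive $p_i$-pieces. This is a Morse-type no-backtracking statement for (weak) path systems (for the weak version one also leans on \cite{Si-hypembqconv}). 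Combining with the previous step — whose separation we arrange to dominate $R$ — the Behrstock inequality applied to both $u$ and $u^{-1}$ shows that for \emph{every} $u\in H$ at least $K-2$ of the indices satisfy the criterion's hypothesis; so if $w$ contains $p_i$ as a sub-block at a position whose complementary conjugate $u$ has $i$ among its $\ge K-2$ ``good'' indices, then $w$ is (weakly) contracting.

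\emph{Probabilistic estimate and main obstacle.} Fix $\ell$ so that a uniform length-$\ell$ word contains some $p_i$ as a factor with probability $\ge\epsilon_0>0$, and cut the length-$n$ word into $\lfloor n/\ell\rfloor$ consecutive length-$\ell$ blocks. By independence and a Chernoff bound, with probability $1-e^{-\Theta(n)}$ there are $\Theta(n)$ blocks containing such a factor. For such a block, the pattern $i(t)$ it exhibits is independent of the complementary conjugate $u^{(t)}$ except through the other pattern blocks (which form only an $O(\epsilon_0)$-fraction), and, given $u^{(t)}$, the criterion can fail for $i(t)$ only if $i(t)$ lies in a set of size $\le 2$; a large-deviation/pivoting argument along the pattern blocks then bounds the probability that \emph{none} of them is good by $(1-c)^{\Theta(n)}$ for some $c>0$. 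Combining the two exponential bounds yields $\mu_n(\text{non-(weakly-)contracting})\le C\lambda^{n}$; the weak case is the same with ``weak path system'' throughout. I expect the crux to be the local-to-global criterion of the third step — a no-backtracking/Morse estimate for (weak) path systems; once it is in place, together with the separation of Schottky axes, the remainder is bookkeeping with i.i.d.\ letters. (In the weakly contracting case one can alternatively bypass the second and third steps via Theorem \ref{hypembintro}, reducing to the known exponential genericity of loxodromic elements for random walks on groups acting non-elementarily on hyperbolic spaces.)
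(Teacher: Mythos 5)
Your plan captures the right strategy — find occurrences of a contracting pattern inside the random word and then apply a small-projections criterion (your ``local-to-global criterion'' is essentially Lemma~\ref{hg}, and your ``Behrstock-type inequality'' is essentially Lemma~\ref{dichot}). But the probabilistic crux is not actually proved, and the heuristic you state for it is misleading. You claim that the pattern index $i(t)$ of block $t$ is ``independent of the complementary conjugate $u^{(t)}$ except through the other pattern blocks (an $O(\epsilon_0)$-fraction).'' In fact $u^{(t)}$ is a function of \emph{all} letters of the word outside the $p_{i(t)}$-factor — i.e.\ of every other block, not just the pattern blocks — and the ``bad index'' set for $u^{(t)}$ consequently depends on the indices $i(s)$, $s\ne t$, and on the bulk of the word. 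The events $\{i(t)\text{ is bad for }u^{(t)}\}$ are therefore genuinely correlated across $t$ (the $u^{(t)}$ are all near-cyclic-permutations of each other), and ``a large-deviation/pivoting argument bounds the probability that none of them is good by $(1-c)^{\Theta(n)}$'' is precisely the statement that needs a proof, not a corollary of a naive union of blocks. The paper handles exactly this via the Claim in Lemma~\ref{45}: a careful sequential conditioning argument over indices ordered cyclically from a fixed $\hat{i}$, splitting the bad event into four sub-events that can be estimated in terms of a fresh subword. That inductive conditioning is the technical heart of the theorem and is missing from the proposal. You also omit the requirement that the complementary piece lie outside $E(g)$ — the paper needs Kesten's theorem and the quasi-isometric embedding of $E(g)$ to rule out an exponentially small bad set of words with a long $E(g)$-subword (Lemma~\ref{avoideg}); without this, Lemma~\ref{hg} does not apply.

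There are also two structural points worth flagging. First, the paper actually works with a \emph{pair} of occurrences (Lemma~\ref{hghg} and the ``good pair'' in Proposition~\ref{jpmneqempty}) rather than a single occurrence as you propose; the reason is that with the cyclic-word conditioning one can only control the forward-projections $\calJ^+$ and backward-projections $\calJ^-$ relative to a \emph{fixed} cut $\hat{i}$, and one then needs a pair $(\hat{i}_1,\hat{i}_2)$ each good in both directions relative to the other. If you insist on the single-occurrence criterion you must show that some index is simultaneously in $\calJ^+$ and $\calJ^-$, which your present sketch does not establish. Second, your alternative route in the weak case (pass to the BBF quasi-tree via Theorem~\ref{hypembintro} and invoke genericity of loxodromics for acylindrical actions) produces an element that is weakly contracting for the \emph{new} action, not for the original weak path system $(X,\calPS)$ in the hypothesis, so it proves a different statement; in particular the WPD property in the original action, which is part of the definition of weakly contracting, is not delivered by loxodromicity in the auxiliary quasi-tree.
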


Roughly speaking, the theorem states that the probability that a long word written down choosing randomly generators of $H$ represents a non-contracting element is very small, and in fact exponentially decaying in the length of the word.
\par
For mapping class groups this was known already \cite{Ma-expdecay}, and related results can be found in \cite{Ri1, Ri2, Ma1, MS}. We emphasize that we can give a self-contained proof of the theorem above modulo a classical result of Kesten.

Finally, we point out two sample applications of the theorem to give examples of how it can be used to show the existence of contracting elements with additional properties.

\begin{cor}
Let $G_1,G_2$ be a non-elementary group supporting the path system (resp. weak path system) $(X,\calPS)$ and containing a contracting (resp. weakly contracting) element.
 \begin{itemize}
  \item For any isomorphism $\phi:G_1\to G_2$ there exists a (weakly) contracting $g\in G$ so that $\phi(g)$ is also (weakly) contracting.
  \item If $H$ is an amenable group and $\phi:G_1\to H$ is any homomorphism, for any $h\in \phi(G_1)$ there exists a (weakly) contracting $g\in \phi^{-1}(h)$.
 \end{itemize}
\end{cor}

\subsection*{Outline} Section \ref{defcontr} contains the definitions we will use throughout. Section \ref{eg} is devoted to the proof of Theorems \ref{examples} and \ref{examples2}. In Section \ref{hypemb:sec} we will show Theorem \ref{hypembintro}. Section \ref{constr:sec} contains our main technical tool (Lemma \ref{hg}), which gives us a way of constructing many contracting elements once we are given just one of them. Finally in Section \ref{mainproof} we will show our main result. In the Appendix we will sketch the proof of a ``Weak Tits Alternative'' not relying on Theorem \ref{hypembintro} and \cite{DGO}.

\subsection*{Acknowledgements} The author would like to thank Cornelia Dru\c{t}u and Denis Osin for helpful conversations and suggestions and Ric Wade for clarifications on $Out(F_n)$. The author was partially funded by the EPSRC grant "Geometric and analytic aspects of infinite groups".

\section{Definitions and first properties}
\label{defcontr}
Let $X$ be a metric space.
\begin{defn}
 A \emph{path system} $\calPS$ in $X$ is a collection of $(\mu,\mu)$-quasi-geodesics in $X$, for some $\mu$, such that
\begin{enumerate}
 \item any subpath of a path in $\calPS$ is in $\calPS$,
 \item all pairs of points in $X$ can be connected by a path in $\calPS$.
\end{enumerate}

Elements of $\calPS$ will be called \emph{$\calPS$-special paths}, or simply special paths if there is no ambiguity on $\calPS$.

\end{defn}

Fix a path system $\calPS$ on the metric space $X$.

\begin{defn}\label{maindef}
 A subset $A\subseteq X$ will be called $\calPS$-\emph{contracting} with constant $C$ if there exists $\pi_A=\pi:X\to A$ such that
\begin{enumerate}
 \item $d(\pi(x),x)\leq C$ for each $x\in A$,
 \item for each $x,y\in X$, if $d(\pi(x),\pi(y))\geq C$ then for any $\calPS$-special path $\delta$ from $x$ to $y$ we have $d(\delta,\pi(x)),d(\delta,\pi(y))\leq C$.
\end{enumerate}
The map $\pi$ will be called \emph{$\calPS$-projection} on $A$ with constant $C$.
\end{defn}

We point out a relation with properties that appeared in the literature in several contexts, see \cite{BF,A-K, Si-proj}.

\begin{lemma}\emph{\cite[Lemma 4.24]{Si-proj}}
\label{4.24}
 Let $X$ be a geodesic metric space and $A\subseteq X$. Denote by $\calPS$ the collection of all geodesics in $X$. Suppose that the map $\pi:X\to A$ satisfies the following properties for some $C$:
\begin{itemize}
 \item $d(x,\pi(x))\leq d(x,A)+C$ for each $x\in X$,
 \item $diam(\pi(B_d(x)))\leq C$ for each $x\in X$, where $d=d(x,A)$.
\end{itemize}
Then $\pi$ is a $\calPS$-projection with constant depending on $C$ only.
\end{lemma}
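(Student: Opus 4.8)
The plan is to verify the two clauses of Definition~\ref{maindef} for $\pi$, with a new constant $C'$ of the form $C'=25C$. The first clause is immediate: if $x\in A$ then $d(x,A)=0$, so the first hypothesis gives $d(x,\pi(x))\le C\le C'$. All the effort goes into the second clause, so I fix $x,y\in X$ with $d(\pi(x),\pi(y))\ge C'$ and a geodesic $\delta\colon[0,L]\to X$ from $x$ to $y$, parametrised by arclength; write $a=\pi(x)$, $b=\pi(y)$, and $f(t)=d(\delta(t),A)$, a $1$-Lipschitz function. By the symmetry $x\leftrightarrow y$ it is enough to produce a point of $\delta$ within $C'$ of $a$.

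The engine of the proof is the following consequence of the second hypothesis: since $\delta(t)$ lies in $B_{f(s)}(\delta(s))$ as soon as $|t-s|<f(s)$,
$$d\bigl(\pi(\delta(s)),\pi(\delta(t))\bigr)\le C\qquad\text{whenever }|t-s|<f(s).$$
Chaining this along a subinterval of $[0,L]$ on which $f\ge\theta$ (with steps of length $\theta/2$) bounds the displacement of $\pi\circ\delta$ there by $2C\cdot(\text{length})/\theta$, up to an additive $O(C)$; applied with $s=0$ it says in particular that the whole ``first ball'' $B_{f(0)}(x)$ projects into the $C$-neighbourhood of $a$, and symmetrically near $y$. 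The first hypothesis enters only in the opposite regime: if $f(t)\le\rho$ then $d(\delta(t),\pi(\delta(t)))\le\rho+C$, so a point of $\delta$ that is close to $A$ and whose projection is close to $a$ is itself close to $a$.

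With this in hand I would argue in two stages. First, $\delta$ must enter the $4C$-neighbourhood of $A$: if not, then $f\ge 4C$ throughout and the chaining bound is effective everywhere, so combining the chaining estimate from both endpoints (via the two ``first balls'') with the triangle inequality $d(x,\delta(t_0))\le d(x,a)+d(a,\pi(\delta(t_0)))+d(\pi(\delta(t_0)),\delta(t_0))$, applied at a point $\delta(t_0)$ of minimal height, first forces $L$ to be $O(\min f)$ and then forces $d(a,b)$ to be $O(C)$ --- contradicting $d(a,b)\ge C'$. Second, let $u_*$ be the first time $\delta(u_*)$ lies within $4C$ of $A$. If $u_*\le f(0)$, then $\delta(u_*)$ lies in the closure of the first ball, so its projection is within $2C$ of $a$, and the first hypothesis puts $\delta(u_*)$ within $O(C)$ of $a$. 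If $u_*>f(0)$ --- the geodesic meanders away from $A$ before returning --- I chain from $\delta(f(0))$ to $\delta(u_*)$ across the region where $f\ge 4C$ to bound $d(\pi(\delta(u_*)),a)$, and then the triangle inequality $d(\delta(u_*),a)\ge u_*-d(x,a)$ forces $u_*-f(0)=O(C)$, hence again $d(\pi(\delta(u_*)),a)=O(C)$ and $\delta(u_*)$ is within $O(C)$ of $a$. Tracking constants, $C'=25C$ is comfortable.

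The hard part is precisely the two triangle-inequality computations above: showing that a geodesic cannot stay uniformly far from $A$ while its endpoints project far apart, and controlling the drift of the projection as the geodesic descends toward $A$. Morally both say that a geodesic fellow-travelling $A$ at bounded positive height for a long time would violate the diameter bound in the second hypothesis, and the triangle inequality is the device that turns this soft statement into the quantitative estimates one needs; the remainder is routine chaining.
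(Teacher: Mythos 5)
The paper does not prove this lemma; it cites \cite[Lemma 4.24]{Si-proj}, so there is no internal proof to compare against. Your argument is the standard one for statements of this type (chain the diameter hypothesis along $\delta$, exploit the first ball $B_{d(x,A)}(x)$, close with a triangle inequality), and it is essentially correct, but one intermediate assertion in Stage~1 is false as stated. You claim that when $f\ge 4C$ throughout, the triangle inequality at a point $\delta(t_0)$ of minimal height $\rho=\min f$ ``forces $L$ to be $O(\min f)$.'' This cannot be right: $L\ge f(0)+f(L)-2\rho$ by Lipschitzness, and $f(0)=d(x,A)$, $f(L)=d(y,A)$ are unconstrained by $\rho$, so $L$ can be arbitrarily large compared with $\rho$.

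What the triangle inequality actually bounds is the length of $\delta$ \emph{outside} the two first balls. Writing $m=t_0-f(0)$ and chaining on $[f(0),t_0]$ with step $\rho/2$ (together with the first-ball estimate $d(a,\pi(\delta(f(0))))\le 2C$) gives $d(a,\pi(\delta(t_0)))\le 3C+2Cm/\rho$; your triangle inequality then yields $m(1-2C/\rho)\le\rho+5C$, hence $m\le 2\rho+10C$, and symmetrically $(L-t_0)-f(L)\le 2\rho+10C$. So it is $L-f(0)-f(L)$, not $L$, that is $O(\rho)$. Substituting $m\le 2\rho+10C$ back and using $\rho\ge 4C$ gives $d(a,\pi(\delta(t_0)))\le 12C$ and, by symmetry, $d(b,\pi(\delta(t_0)))\le 12C$, whence $d(a,b)\le 24C$ --- the contradiction you wanted, reached without ever bounding $L$ itself. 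Stage~2 is correct as written (in the worse sub-case it gives $u_*-f(0)\le 18C$ and $d(\delta(u_*),a)\le 17C$), so with this repair your choice $C'=25C$ does suffice.
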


The following lemma will be used several times.

\begin{lemma}\label{loccnst}
 Let $\pi$ be a $\calPS$-projection with constant $C$ on $A\subseteq X$. Then
\begin{enumerate}
 \item whenever $\delta$ is a special path we have $diam(\pi(\delta))\leq diam(\delta\cap N_C(A))+2C$ and more specifically $diam(\pi(\delta))\leq C$ if $\delta\cap N_C(A)=\emptyset$.
\par
Also, there exists $k=k(\calPS)$ such that
 \item for each $x,y\in X$, $d(\pi(x),\pi(y))\leq kd(x,y)+k$,
 \item for each $x\in X$ we have $diam(\pi(B_r(x)))\leq C$ for $r=\dfrac{d(x,A)}{k}-k$,
 \item for each $x\in X$, $d(x,\pi(x))\leq kd(x,A)+k$.
\end{enumerate}
\end{lemma}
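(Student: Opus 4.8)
The plan is to establish the four items roughly in the stated order, extracting the constant $k$ in the course of proving item (1). First I would prove item (1). Let $\delta$ be a $\calPS$-special path, say a $(\mu,\mu)$-quasi-geodesic, and suppose first that $\delta \cap N_C(A) = \emptyset$. For any two points $x,y$ on $\delta$, the subpath of $\delta$ between them is again in $\calPS$ by axiom (1) of a path system, so if we had $d(\pi(x),\pi(y)) \geq C$, property (2) of Definition \ref{maindef} would force that subpath to come within $C$ of $A$, a contradiction; hence $d(\pi(x),\pi(y)) < C$ for all $x,y \in \delta$, giving $\mathrm{diam}(\pi(\delta)) \leq C$. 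For the general case, write $\delta \cap N_C(A)$ as contained in a subpath $\delta'$ of $\delta$ whose endpoints $p,q$ lie in $N_C(A)$ (or handle the endpoints of $\delta$ directly); the two complementary subpaths $\delta_1, \delta_2$ miss $N_C(A)$ except possibly at their $\delta'$-endpoints, so the previous argument (applied after shrinking slightly, or noting $d(\pi(p),p)$-type bounds) gives $\mathrm{diam}(\pi(\delta_i)) \leq C$, while $\mathrm{diam}(\pi(p) \cup \pi(q)) \leq \mathrm{diam}(\delta \cap N_C(A)) + $ a bounded error coming from the fact that $p,q$ are within $C$ of $A$ and projections of nearby points of $A$ are close. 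Assembling these pieces yields $\mathrm{diam}(\pi(\delta)) \leq \mathrm{diam}(\delta \cap N_C(A)) + 2C$; the bookkeeping on the additive constant is the only mildly delicate point here.

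Next, item (2): given $x,y \in X$, connect them by a special path $\delta \in \calPS$ using axiom (2) of the path system. Since $\delta$ is a $(\mu,\mu)$-quasi-geodesic, its length is at most $\mu\, d(x,y) + \mu$ (after reparametrizing), so $\delta \subseteq N_r(\{x\})$ where $r \sim \mu d(x,y) + \mu$. By item (1), $\mathrm{diam}(\pi(\delta)) \leq \mathrm{diam}(\delta) + 2C \leq \mu d(x,y) + \mu + 2C$, and since $\pi(x),\pi(y) \in \pi(\delta)$ this gives $d(\pi(x),\pi(y)) \leq \mu d(x,y) + (\mu + 2C)$. Setting $k$ to be the maximum of $\mu$, $\mu + 2C$, and whatever is needed below, we get the claim; note $k$ depends only on $\calPS$ (through $\mu$ and $C$), as required. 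Item (4) follows in the same spirit: take a special path $\delta$ from $x$ to a nearest point $a \in A$ (or a point with $d(x,a) \le d(x,A)+1$); its length is $\le \mu d(x,A) + \mu'$, and $d(x,\pi(x)) \le d(x,\pi(a)) \le \mathrm{diam}(\delta) + d(a,\pi(a))$, where $d(a,\pi(a)) \le C$ by property (1) of Definition \ref{maindef}, giving $d(x,\pi(x)) \leq k\, d(x,A) + k$ after enlarging $k$.

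Finally, item (3) is the contrapositive-flavored consequence of item (2): if $x,y \in B_r(x_0)$ — I would instead read it as $B_r(x) \subseteq X$ for a single basepoint $x$, following the statement — then for $w \in B_r(x)$ we estimate $d(\pi(x),\pi(w)) \leq k\, d(x,w) + k \leq k r + k$. Choosing $r = d(x,A)/k - k$ makes this at most $d(x,A) \cdot \tfrac{k}{k} = \ldots$; more carefully, one wants $kr + k$ compared against $C$, so the right bookkeeping is to observe that if $\mathrm{diam}(\pi(B_r(x))) > C$ then there are two points in $B_r(x)$ with far-apart projections, and tracing a special path between them that stays in a controlled neighborhood of $x$ (radius $\sim \mu(2r) + \mu$), item (1) combined with property (2) of Definition \ref{maindef} forces this path — hence a point within $\sim 2\mu r + \mu$ of $x$ — to lie within $C$ of $A$, contradicting $d(x,A) > kr + k$ for suitably chosen $k$. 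So for $r = d(x,A)/k - k$ the diameter bound holds. The main obstacle throughout is not any single hard idea but keeping the additive and multiplicative constants consistent — in particular making sure the single constant $k = k(\calPS)$ simultaneously works for items (2), (3), (4), which just amounts to taking $k$ large enough at the end.
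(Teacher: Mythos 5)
Your overall plan parallels the paper's: all four items come down to property (2) of Definition \ref{maindef} together with the quasi-geodesic constants of special paths, with a single $k=k(\calPS)$ taken large at the end. Items (2) and (3) are fine as sketched (modulo the usual bookkeeping that $\mathrm{diam}(\delta)$ is $O(\mu^2 d(x,y))$ rather than $O(\mu\, d(x,y))$). However, two of your steps do not close as written.

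In item (4) you assert $d(x,\pi(x)) \le d(x,\pi(a))$. This would hold if $\pi$ were a closest-point projection, but Definition \ref{maindef} does not say that, and the inequality is false in general. The fix is easy: either invoke item (2), which you have already proved, to bound $d(\pi(x),\pi(a))\le k\,d(x,a)+k$ and then use the triangle inequality; or argue directly as the paper does, via property (2): either $d(\pi(x),\pi(a))<C$, or the special path $\gamma$ from $x$ to $a$ contains a point $x'$ with $d(x',\pi(x))\le C$, whence $d(x,\pi(x))\le d(x,x')+C\le \mu^2 d(x,A)+O(1)$. Secondly, the general case of item (1) via a three-piece decomposition of $\delta$ relies on "projections of nearby points of $A$ are close," i.e.\ a bound on $d(p,\pi(p))$ for $p\in N_C(A)\setminus A$. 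Property (1) only covers $p\in A$, so this needs an extra argument (a special path from $p$ to a nearby point of $A$ together with property (2)). The decomposition is unnecessary: for arbitrary $x,y$ on $\delta$, if $d(\pi(x),\pi(y))\ge C$ then property (2) applied to the subpath from $x$ to $y$ gives $x',y'\in\delta\cap N_C(A)$ with $d(x',\pi(x)),\,d(y',\pi(y))\le C$, so $d(\pi(x),\pi(y))\le \mathrm{diam}(\delta\cap N_C(A))+2C$ outright, with the case $\delta\cap N_C(A)=\emptyset$ falling out immediately. That is the paper's one-line proof of item (1).
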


\begin{proof}
 Item $1)$ is clear from the second projection property.
\par
Item $2)$ follows from special paths being quasi-geodesics and the fact that either $d(\pi(x),\pi(y))\leq C$ or any special path from $x$ to $y$ passes $C$-close to $\pi(x),\pi(y)$.
\par
Item $3)$ holds in view of item $1)$ because, for $c$ large enough, for each $y\in B_r(x)$ there is a special path connecting $x$ to $y$ and not intersecting $N_C(A)$.
\par
In order to show item $4)$, consider a special path $\g$ from $x$ to some $y\in A$ with $d(x,y)\leq d(x,A)+1$. If $d(y,\pi(x))\leq C$, we are done. Otherwise, $\g$ contains a point $x'$ such that $d(x',\pi(x))\leq C$. In particular,
$$d(x,\pi(x))\leq d(x,x')+C\leq \mu d(x,y)+\mu+C\leq \mu d(x,A)+\mu+C+1,$$
and we are done for $c$ large enough.
\end{proof}

\begin{lemma}\label{dichot}
 If $k$ is as in the previous lemma, $A,B\subseteq X$ are $\calPS$-contracting subsets with constant $C$ and $x\in X$, then
$$\min\{d(\pi_A(x),\pi_A(B)),d(\pi_B(x),\pi_B(A))\}\leq C'.$$
where $C'=kC+k+C$.
\end{lemma}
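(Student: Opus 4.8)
The plan is to argue by contradiction: suppose that both $d(\pi_A(x),\pi_A(B))>C'$ and $d(\pi_B(x),\pi_B(A))>C'$, and derive a contradiction from the contracting properties of $A$ and $B$. The key point is to pick a good auxiliary point, namely a point $b\in B$ realizing (up to a bounded error) the distance $d(\pi_B(x),\pi_B(A))$ is not quite what I want; instead I would choose $b\in B$ so that $\pi_B(x)$ and $b=\pi_B(a)$ for some $a\in A$ are far apart, and simultaneously track the projections to $A$. Concretely, fix any $a\in A$ and set $b=\pi_B(a)$, so $d(b,\pi_B(A))\le 0$ trivially, which forces $d(\pi_B(x),b)=d(\pi_B(x),\pi_B(a))>C'\ge C$ by the first assumption applied with this $a$. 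Then the contracting property of $B$ says that any $\calPS$-special path $\delta$ from $x$ to $a$ passes $C$-close to both $\pi_B(x)$ and $b$; in particular $\delta$ comes $C$-close to $b\in B$, hence $\delta\cap N_C(B)\ne\emptyset$.

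Next I would look at the same special path $\delta$ from $x$ to $a$ from the point of view of $A$. Since $a\in A$ we have $d(\pi_A(a),a)\le C$, so $\pi_A(a)$ is within $C$ of $B$-independent data; more usefully, I want to compare $\pi_A(x)$ and $\pi_A(a)$. If $d(\pi_A(x),\pi_A(a))\ge C$, then $\delta$ passes $C$-close to $\pi_A(x)$; combined with the previous paragraph, $\delta$ contains a point $p$ with $d(p,\pi_A(x))\le C$ and a point $q$ with $d(q,b)\le C$, $b\in B$. Now apply Lemma \ref{loccnst}(2): $d(\pi_A(p),\pi_A(q))\le k\,d(p,q)+k$, but that still involves $d(p,q)$ which is unbounded, so this is the wrong estimate. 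Instead the right tool is Lemma \ref{loccnst}(1): since $q$ lies on the special path $\delta$ and $q$ is $C$-close to $B$, and $\pi_A(q)$ is close to $\pi_A(b)\in\pi_A(B)$ — here I use Lemma \ref{loccnst}(2) once more but now only on the short hop from $q$ to $b$, giving $d(\pi_A(q),\pi_A(b))\le kC+k$. Meanwhile $\pi_A(p)$ is close to $\pi_A(x)$: again by Lemma \ref{loccnst}(2) on the hop from $p$ to (a point near) $x$...

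The cleanest route, and the one I would actually write up, avoids $p$ and $x$ entirely. Take $b=\pi_B(a)$ as above with $d(\pi_B(x),b)>C$, so every special path $\delta$ from $x$ to $a$ meets $N_C(B)$, say at a point $q$ with $d(q,b')\le C$ for some $b'\in B$. If also $d(\pi_A(x),\pi_A(a))\ge C$ then $\delta$ meets $N_C(\pi_A(x))$, but more to the point, the subpath of $\delta$ from that point to $a$ is still $\calPS$-special and has one endpoint in $A$, so by Lemma \ref{loccnst}(1) applied to $A$ the diameter of $\pi_A(\delta)$ is controlled; in particular $d(\pi_A(x),\pi_A(a))$ and $d(\pi_A(q),\pi_A(a))$ are both within the diameter bound, hence $d(\pi_A(x),\pi_A(q))\le \mathrm{diam}(\pi_A(\delta))+2C$. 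Then $d(\pi_A(x),\pi_A(B))\le d(\pi_A(x),\pi_A(b'))\le d(\pi_A(x),\pi_A(q))+k\,d(q,b')+k\le \mathrm{diam}(\pi_A(\delta))+2C+kC+k$. The alternative is $d(\pi_A(x),\pi_A(a))<C$, but then since $a\in A$, $d(\pi_A(a),a)\le C$ gives $d(\pi_A(x),a)\le 2C$ and hence $d(\pi_A(x),\pi_A(B))\le d(\pi_A(x),\pi_A(b'))\le 2C + k\cdot 2C + k$ after routing through $a$ and $b'$...

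I expect the main obstacle to be bounding $\mathrm{diam}(\pi_A(\delta))$ cleanly: the special path $\delta$ goes from $x$ (far from $A$, potentially) to $a\in A$, so a priori $\delta$ spends a lot of time near $A$ and $\mathrm{diam}(\pi_A(\delta))$ could be large. The resolution is that we do not need $\delta$ at all on the whole; we only need the portion of $\delta$ \emph{near $B$}, and the observation that $\pi_A$ restricted to $B\cup(\text{bounded neighborhood of a point of }B)$ moves things by at most $O(kC)$ via Lemma \ref{loccnst}(2). Thus the honest shortest argument is: pick $a\in A$, let $b=\pi_B(a)$; the assumption $d(\pi_B(x),\pi_B(B))$... rather, if $d(\pi_B(x),\pi_B(A)) > C' \ge C$, then $d(\pi_B(x),b)>C$, so a special path from $x$ to $a$ passes $C$-close to $b$, at a point $q$; symmetrically if $d(\pi_A(x),\pi_A(B))>C'\ge C$ then a special path from $x$ to $b$ passes $C$-close to $\pi_A(x)$ — and now I would run the Lemma \ref{loccnst}(2) estimate $d(\pi_A(x),\pi_A(b))\le k\,d(x,b)+k$ is again too weak, so the genuine key lemma must be the finite-diameter statement Lemma \ref{loccnst}(1) applied to a path with an endpoint \emph{on} $A$, which forces $\mathrm{diam}(\pi_A(\cdot))\le C$ on the part of the path outside $N_C(A)$. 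Assembling: $d(\pi_A(x),\pi_A(B))\le d(\pi_A(x),\pi_A(b)) \le C + k\cdot 0 + k$ along the endpoint-controlled special path, contradicting $>C' = kC+k+C$ for $k\ge 1$. I would state it in this last form, tracking the constant to land exactly on $C' = kC+k+C$.
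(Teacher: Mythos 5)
Your proposal circles around the right toolbox---argue by contradiction, use Lemma~\ref{loccnst}(1) to bound $\mathrm{diam}(\pi_A)$ of a special path away from $N_C(A)$, and Lemma~\ref{loccnst}(2) to ``hop'' a distance of order $kC+k$ from a point of $N_C(B)$ into $\pi_A(B)$---but it never assembles these into a correct argument. The crucial missing idea is the one that makes the paper's proof close: work with a \emph{single} special path $\delta$ from $x$ to a point of $B$, and compare the \emph{order} in which $\delta$ first enters $N_C(A)$ versus $N_C(B)$. If $\delta$ enters $N_C(A)$ at a point $p$ before ever entering $N_C(B)$, then the prefix of $\delta$ up to $p$ avoids $N_C(B)$, so by Lemma~\ref{loccnst}(1) its $\pi_B$-image has diameter $\leq C$, giving $d(\pi_B(x),\pi_B(p))\leq C$; then $p\in N_C(A)$ and Lemma~\ref{loccnst}(2) yield $d(\pi_B(p),\pi_B(A))\leq kC+k$, hence $d(\pi_B(x),\pi_B(A))\leq C'$. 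If instead $\delta$ meets $N_C(B)$ at some $q$ before $p$, the symmetric argument with $A$ and $B$ swapped bounds $d(\pi_A(x),\pi_A(B))$, which contradicts the standing assumption. Your write-up instead produces \emph{two different} special paths ($x$ to $a$, and $x$ to $b$), which cannot be played against each other, and the step you yourself flag as problematic---bounding $d(\pi_A(x),\pi_A(q))$---is exactly the gap: you never establish that $q$ is hit \emph{before} the path enters $N_C(A)$, which is what licenses the $\leq C$ bound on that middle term via Lemma~\ref{loccnst}(1). Finally, the closing display ``$d(\pi_A(x),\pi_A(b))\leq C+k\cdot 0+k$'' is unjustified: the special path from $x$ to $b$ may enter $N_C(A)$, so Lemma~\ref{loccnst}(1) gives no $\leq C$ diameter bound on its $\pi_A$-image, and the ``$k\cdot 0$'' term has no source.
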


\begin{proof}
 If $\max\{d(\pi_A(x),\pi_A(B)),d(\pi_B(x),\pi_B(A))\}\leq kC+k+C$ there is nothing to prove, so, up to swapping $A$ and $B$, suppose that $d(\pi_A(x),\pi_A(B))> kC+k+C$. Let $\delta$ be a special path from $x$ to some $x'\in B$, and let $p$ be the first point in $\delta\cap N_C(A)$. If we show that there is no point $q\in\delta\cap N_C(B)$ before $p$, we are done by Lemma~\ref{loccnst}$-(4)-(1)$. In fact, if there was such a point $q$, again by Lemma~\ref{loccnst}$-(1)-(4)$ we would have
$$d(\pi_A(x),\pi_A(B))\leq d(\pi_A(x),\pi_A(q))+d(\pi_A(q),\pi_A(B))\leq C+kC+k.$$
\end{proof}

We are ready for our main definitions.

\begin{defn}
 Let $G$ be a group. A \emph{path system} $(X,\calPS)$ on the group $G$ is a \emph{proper} action of $G$ on the metric space $X$ preserving the path system $\calPS$ on $X$.
\par
An infinite order element $g$ of $G$ will be called $\calPS$-contracting if for some (and hence any) $x_0\in X$ and any integer $n$
\begin{enumerate}
 \item the orbit of $x_0$ is a quasi-geodesic,
 \item there exists a $g$-invariant contracting set $A\ni x_0$, called \emph{axis}, so that $g$ acts coboundedly on $A$.
\end{enumerate}
\end{defn}

The reason why we do not just take $A$ to be the orbit is that we will be interested in contracting elements with translation length much larger than the contraction constant of their axes, and we will achieve this by taking powers while keeping the same axis.

Slightly extending a notion from \cite{BeFu-wpd} we say that, given a group $G$ acting on the metric space $X$, the element $g\in G$ is \emph{WPD} if the orbits of $\langle g \rangle $ are Morse quasi-geodesics and for every $R\geq 0$ and $x\in X$ there exists $N=N(R,x)$ such that
$$|\{h\in G: d(x,h(x))\leq R, d(hg^N x, g^N x)\leq R\}|<+\infty.$$
\par
We will need the following consequence of the WPD property.

\begin{lemma}\emph{(cf. \cite[Proposition 6$-(2)$]{BeFu-wpd})}
\label{swpd}
 Suppose that $G$ acts on $X$ and $g\in G$ is WPD. Then for each $x\in X$, $R\geq 0$ there exists $L$ so that
$$\{h\in G:d(x,hx)\leq R, diam(N_R(h\langle g \rangle x)\cap \langle g \rangle x)\geq L\}$$
is a finite set.
\end{lemma}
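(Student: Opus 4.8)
The plan is to use that the orbit $\langle g\rangle x$ is a Morse quasigeodesic together with the WPD property of $g$. Two facts underlie the argument. Writing $\iota(n)=g^nx$: being a Morse quasigeodesic, $\iota$ is \emph{efficient}, i.e. $d(x,g^nx)=\tau|n|+O(1)$ for some $\tau=\tau(g)>0$; and for every $h\in G$ one has $d(h\iota(s),h\iota(t))=d(g^sx,g^tx)$, so the translated orbit $h\iota$ has \emph{the same} speed function as $\iota$. Together with the stability of Morse quasigeodesics these give a coherent fellow--travelling statement: if a subsegment $\iota([a,b])$ stays $M_1$--close to the Morse quasigeodesic $h\iota$ and $\sigma(s)$ is a closest--point parameter on $h\iota$, then there are $\eta=\eta(h)\in\{\pm1\}$ and a constant $C$ (depending on $M_1$ and the Morse/quasigeodesic data only) with $\sigma(s)=\eta s+\sigma(0)+O(C)$ for all $s\in[a,b]$. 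Efficiency and the coincidence of speed functions are exactly what force the slope here to be $\pm1$, and stability forbids $\sigma$ from doubling back.

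Now fix $h$ in the set in the statement, with $L$ to be chosen. Since $d(x,hx)\le R$ we have $x\in N_R(h\langle g\rangle x)\cap\langle g\rangle x$, so this set is nonempty; having diameter $\ge L$ it contains $\iota(m),\iota(n)$ with $n-m\ge L_1$, where $L_1\to\infty$ as $L\to\infty$. With $M_-=\min(0,m)$ and $M_+=\max(0,n)$, the points $\iota(M_-),\iota(0),\iota(M_+)$ are all $R$--close to $h\iota$, so $\iota([M_-,M_+])$ is $M_1$--close to $h\iota$ and the coherent fellow--travelling statement applies on $I_h:=[M_-,M_+]$, an interval containing $0$ of length $\ge L_1$. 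Taking $s=0$ one sees that $c_h:=\sigma(0)$ satisfies $|c_h|\le C'$ for a constant $C'$ independent of $h$, because $h\iota(c_h)$ is $M_1$--close to $x$, which is $R$--close to $h\iota(0)$, and $\iota$ is efficient. Hence $h\iota(\eta s+c_h)$ is within a bounded distance (independent of $h$) of $\iota(s)$ for all $s\in I_h$, with $\eta=\eta(h)\in\{\pm1\}$ and $|c_h|\le C'$.

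Suppose first $\eta(h)=+1$. Then $k:=hg^{c_h}$ satisfies $d(k\iota(s),\iota(s))\le R'$ for all $s\in I_h$, with $R'$ independent of $h$. Choose $L$ so that $L_1>2N(R',x)$, where $N(R',x)$ is the WPD constant of $g$. Since $I_h\ni 0$ has length $>2N(R',x)$, one of $\pm N(R',x)$ lies in $I_h$; conjugating $k$ by the appropriate power of $g$ (of size $\le N(R',x)$) therefore puts it in $\{j:\,d(x,jx)\le R',\ d(jg^Nx,g^Nx)\le R'\}$ with $N=N(R',x)$, a finite set by WPD. So $k$ lies in a fixed finite set, and since $c_h$ takes boundedly many values so does $h$. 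If $\eta(h)=-1$, then $g^{-c_h}h$ coarsely equals the reflection $\iota(s)\mapsto\iota(-s)$ on $\iota(I_h)$; composing two such elements --- after inverting one of them if necessary, so that their long ``reflection windows'' lie on the same side of $x$ --- gives an element that coarsely equals a bounded translation on a long orbit segment containing $x$, i.e. an element of the $\eta=+1$ type already treated. Fixing one $\eta=-1$ element of our set and varying the other then traps all $\eta=-1$ members in a fixed finite set as well. Thus, for $L$ chosen as above, the set in the statement is finite.

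The crux is the coherent fellow--travelling step: one needs the reparametrisation matching the two orbit quasigeodesics to have slope exactly $\pm1$, bounded offset, and no doubling back, which is where efficiency of Morse quasigeodesics and the equality of speed functions enter; this is the natural place to isolate a lemma. The orientation--reversing case is a secondary nuisance: the bookkeeping --- in particular the case distinction by which side of $x$ the long overlap lies on, which dictates how two reflections should be composed --- adds nothing conceptual and could be confined to a remark.
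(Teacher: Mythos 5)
Your proof is correct and follows essentially the same route as the paper's: both split $h$ into an orientation-preserving case and an orientation-reversing case using the Morse property of $\langle g\rangle x$, apply WPD directly in the first case (after a bounded conjugation and absorbing the bounded offset $c_h$), and reduce the second case to the first by composing two reflection-type elements. The only meaningful difference is how the dichotomy is established: the paper uses a single endpoint estimate ($|d(g^{N'}x,x)-d(g^{N}x,x)|\le 2R'$ forces $g^{N'}x$ to lie near $g^{N}x$ or $g^{-N}x$), while you prove the stronger global statement that the fellow-travelling reparametrization has slope $\pm 1$ with bounded offset, via the efficiency $d(x,g^nx)=\tau|n|+O(1)$ of Morse quasigeodesics; both are implementations of the same underlying idea.
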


The meaning of the lemma is illustrated in Figure \ref{swpd:pic}. Roughly speaking, the definition of WPD gives us that there are finitely many elements $h$ so that for some large $N$ we have the situation depicted for $N'=N$. The lemma gives us the same condition for all large enough $N$ and without the requirement $N'=N$.
\begin{figure}[ht]
\hspace{-.5cm}
\begin{minipage}[bl]{.45\textwidth}
\includegraphics[width=\textwidth]{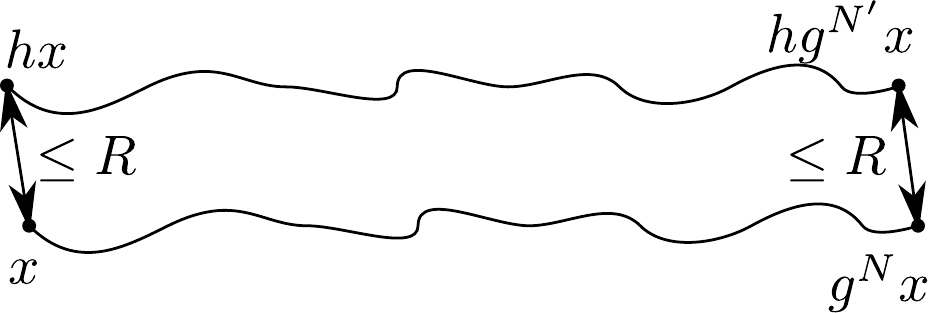}
\caption{}\label{swpd:pic}
\end{minipage}
\begin{minipage}[br]{.55\textwidth}
\includegraphics[width=\textwidth]{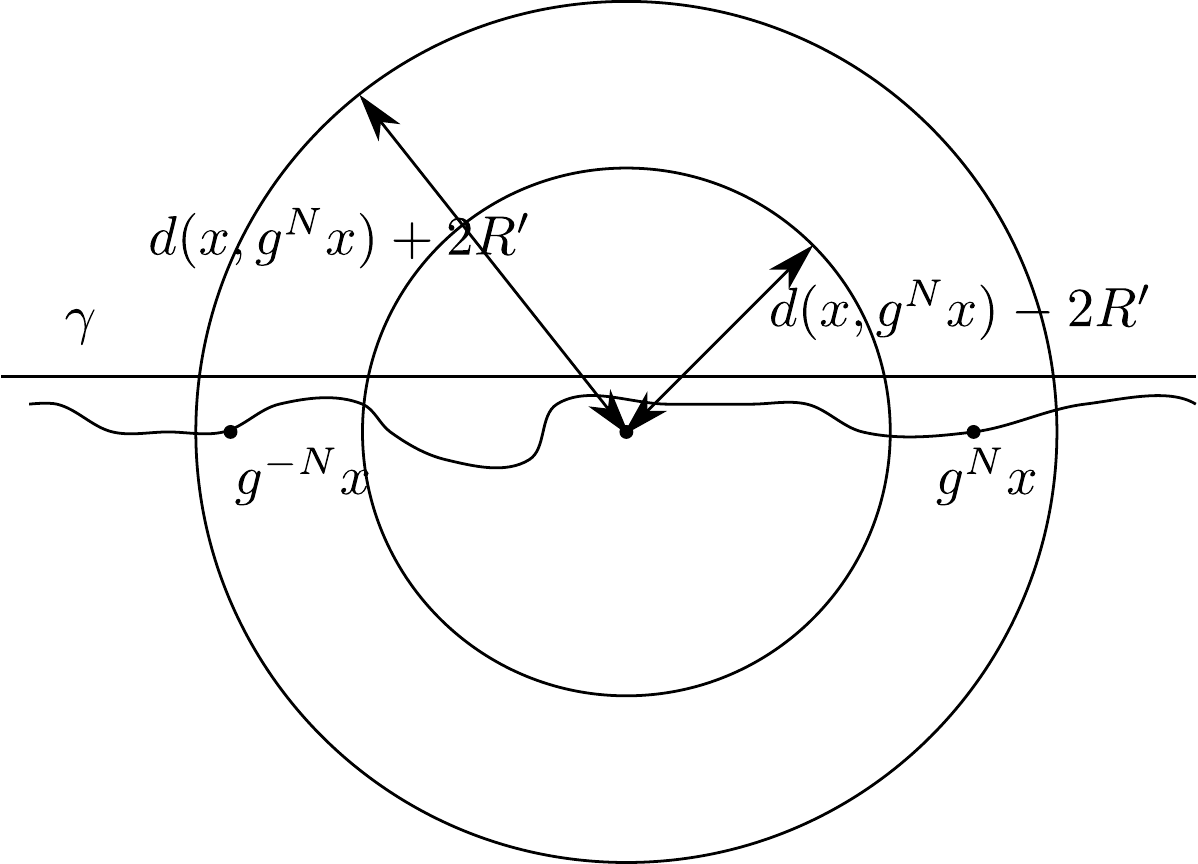}
\caption{}\label{swpd2}
\end{minipage}
\end{figure}

\begin{proof}
Fix $x,R,g$. First of all notice that, as $\langle g \rangle x$ is Morse, there exists $R'\geq R$ depending on $R$ (and the data we fixed) so that if $d(x,hx)\leq R, d(g^{N}x,h\langle g \rangle x)\leq R$ then for each $0\leq n\leq N$ we have $d(g^nx,h\langle g \rangle x)\leq R'$. We claim that it suffices to show that if
$$d(x,hx)\leq R, d(g^{N}x,hg^{N'}x)\leq R'\ \ (*)$$
for some $N,N'$ then either $d(g^{N}x,hg^{N}x)\leq C(R)$ or $d(g^{N}x,hg^{-N}x)\leq C(R)$, for some constant $C(R)\geq R$. In fact, if $h,h'$ are as in the second case then $hh'$ is easily seen to satisfy $d(hh'x,x)\leq 2R$, $d(g^{N}x,hh'g^{N}x)\leq 2C(R)$. So, the cardinality of the set as in the statement is at most the cardinality of
$$\{h\in G: d(x,hx)\leq 2R, d(hg^N x, g^N x)\leq 2C(R)\},$$
where for $L$ large enough we can choose $N=N(C(R))$. By definition of $N(C(R))$ this set is finite, as required.
\par
Now, suppose that $(*)$ holds. Then we have
$$|d(g^{N'}x,x)-d(g^Nx,x)|\leq 2R'.$$
So, we just need to show that if $N'$ satisfies this condition then $|N-N'|\leq K(R)$. In order to show this notice that, as the orbits of $g$ are Morse quasi-geodesics, there exists a geodesic $\gamma$ so that $d(g^N, \gamma),d(g^{N'},\gamma),d(x,\gamma)\leq D$ (where $D$ does not depend on $N, N'$), see Figure \ref{swpd2}. In particular, if $N'$ satisfies the condition then it is contained in a ball of radius, say, $2R'+10D$ around $g^{N}x$ or in a ball of the same radius around $g^{-N}x$. The existence of $K(R)$ then follows from the orbit of $g$ being a quasi-geodesic.
\end{proof}

\begin{defn}\label{def:wps}
 Let $G$ be a group. A \emph{weak path system} $(X,\calPS)$ on the group $G$ is any action of $G$ on the metric space $X$ which preserves the path system $\calPS$ on $X$.
\par
An infinite order element $g$ of $G$ will be called weakly $\calPS$-contracting if it is WPD and some (hence any) orbit of $g$ is contained in a contracting $g$-invariant set $A$ on which $g$ acts coboundedly.
%
%
\end{defn}

Notice that we weakened the requirements for the action, by dropping the properness assumption, and ``compensated'' for this by requiring that the action is only proper ``in the direction'' of $g$.


\begin{lemma}\label{morse}
\hskip 0cm
\begin{enumerate}
 \item Special paths which are $\calPS$-contracting with constant $C$ are Morse with constants depending on $C$ only.
 \item Contracting elements are Morse.
\end{enumerate}
\end{lemma}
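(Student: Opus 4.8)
The plan is to establish $(1)$ by the standard argument showing that a $\calPS$-contracting quasi-geodesic is Morse, and then to deduce $(2)$ from $(1)$ by observing that the $\langle g\rangle$-orbit of the basepoint is a quasi-geodesic which is itself $\calPS$-contracting, being at finite Hausdorff distance from the axis.

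For $(1)$, I would fix a special path $\alpha$ that is $\calPS$-contracting with constant $C$ and $\calPS$-projection $\pi=\pi_\alpha$. Since $\alpha$ is a $(\mu,\mu)$-quasi-geodesic, showing it is Morse amounts to bounding $\sup_t d(q(t),\alpha)$ for an arbitrary $(\lambda,\epsilon)$-quasi-geodesic $q\colon[a,b]\to X$ with $q(a),q(b)\in\alpha$, by a constant depending on $\lambda,\epsilon,C$ and $\mu$ only. First I would replace $q$ by the concatenation $\delta_1\cdots\delta_m$ of $\calPS$-special paths joining the consecutive points $q(n)$, $n\in\Z\cap[a,b]$; by the path system axioms and the fact that special paths are uniform quasi-geodesics, this new path is continuous, lies within bounded Hausdorff distance of $q$, is a $(\lambda',\epsilon')$-quasi-geodesic with $\lambda',\epsilon'$ controlled by $\lambda,\epsilon,\mu$, and each $\delta_j$ has diameter at most some $L_0=L_0(\lambda,\epsilon,\mu)$. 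So I may assume $q$ has this form; since $q$ then has bounded image, $D:=\sup_t d(q(t),\alpha)$ is finite, and I want to bound it.

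Pick $t_0$ with $d(q(t_0),\alpha)>D-1$ and let $[s_0,u_0]\ni t_0$ be the maximal subinterval on which $q$ stays at distance more than $2L_0+C$ from $\alpha$; at its endpoints $q$ is at distance close to $2L_0+C$ from $\alpha$, and since $q$ reaches distance $\simeq D$ from $\alpha$ at $t_0$, this interval has length $\gtrsim D/\lambda$. For a $\delta_j$ both of whose endpoints are deeper than $2L_0+C$, the whole of $\delta_j$ avoids $N_C(\alpha)$, so $\mathrm{diam}(\pi(\delta_j))\le C$ by Lemma~\ref{loccnst}$(1)$; in particular consecutive projections $\pi q(n)$ move by at most $C$ along the deep excursion. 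The key refinement, which I would carry out using Lemma~\ref{loccnst}$(3)$ with the scale adapted to the current depth $\rho$ of $q$ (at depth $\rho$ one can afford steps of size of order $\rho/k$, $k=k(\calPS)$, still keeping the projection of each step of diameter $\le C$), is that the total displacement of $\pi\circ q$ over $[s_0,u_0]$ grows only logarithmically in $D$. Combined with the quasi-geodesic inequality — which, together with the triangle inequality along $\alpha$ and the estimates of Lemma~\ref{loccnst}$(2),(4)$, forces the displacement of $\pi\circ q$ over $[s_0,u_0]$ to be $\gtrsim D/\lambda^2$ up to additive constants — this would give an inequality of the shape $D\le A\lambda^2 kC\log D+A'$ with $A,A'$ depending only on $\lambda,\epsilon,C,\mu$, hence a bound on $D$. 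The step I expect to be the main obstacle is exactly this adaptive-scale bookkeeping producing the logarithm; it is, however, a by-now standard manipulation for contracting (equivalently, strongly contracting) quasi-geodesics, cf.\ \cite{Si-proj}.

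Finally, for $(2)$, let $g$ be $\calPS$-contracting, with axis $A$ and basepoint $x_0\in A$. The orbit $O=\langle g\rangle x_0$ is contained in $A$ by $g$-invariance, is a quasi-geodesic by the definition of a contracting element, and is cobounded in $A$, so $O$ and $A$ are at finite Hausdorff distance; composing $\pi_A$ with a nearest-point map $A\to O$ then exhibits $O$ as a $\calPS$-contracting set, with constant depending only on the data. The argument for $(1)$ used nothing about $\alpha$ except that it is a $\calPS$-contracting quasi-geodesic, so it applies verbatim to $O$ and shows $O$ is a Morse quasi-geodesic. Since $\langle g\rangle$ is undistorted (again by the definition of contracting element, via the orbit map), this is exactly the statement that $g$ is a Morse element, which is $(2)$.
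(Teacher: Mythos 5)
Your part (1) strategy and the paper's share the same overall shape --- chop the competitor quasi-geodesic into short arcs, show that arcs far from $\alpha$ have projection of diameter at most $C$, and derive a contradiction --- but the paper works at a \emph{fixed} scale rather than an adaptive one, and this matters. Your logarithmic bound on the displacement of $\pi\circ q$ over the deep excursion does not hold as stated: the adaptive-scale step count is $\sim k\log D$ only if the excursion goes monotonically out to depth $D$ and straight back, and nothing prevents $q$ from lingering near depth $D$ for a long time $T$, during which each jump still has image size at most about $D/k$, so the number of jumps is $\gtrsim Tk/(\lambda D)$, which is unbounded in $T$ and has nothing to do with $\log D$. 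One can still close the argument (the lower bound on the displacement also grows with $T$, and a per-unit-$T$ slope comparison works once $D>kC$), but the clean inequality $D\lesssim \lambda^2 kC\log D$ does not come out and the bookkeeping is more delicate than you suggest. The paper avoids all of this by fixing a step size $r$ once and for all so that for consecutive points $h_i,h_{i+1}$ the quasi-geodesic lower bound per step exceeds $\rho C+1$, while (for deep points) the lifted projection moves by at most $\rho C$ per step; over a maximal deep chain of $n$ points this yields $d(h_0,h_n)>n\rho C+n-c$ against the triangle-inequality bound $d(h_0,h_n)\le n\rho C+\rho+d(h_0,\pi'(h_0))+d(h_n,\pi'(h_n))$, whose last two terms are uniformly bounded because the endpoints of a maximal deep chain sit near the axis. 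Hence $n$, and with it the depth, is bounded --- no logarithm, no adaptive scale.

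For part (2) you reduce to (1), while the paper proves (2) directly and remarks that (1) follows from the same, slightly simpler, argument. Your reduction glosses over the passage between $G$ and $X$: Morse for $g$ concerns quasi-geodesics in $G$ with endpoints on $\langle g\rangle$, and pushing such a path to $X$ via $h\mapsto hx_0$ need not produce a quasi-geodesic in $X$, since the action is only assumed proper, not cobounded, so the orbit map is a coarse embedding but not a priori a QI embedding on all of $G$. The paper keeps the quasi-geodesic inequality in $G$ and uses the $X$-projection only to control the lifted map $\pi'\colon G\to\langle g\rangle$; the conversion constant $\rho$ there is legitimate because the orbit map \emph{restricted to $\langle g\rangle$} is a QI embedding (the orbit is a quasi-geodesic in $X$ and the orbit map is Lipschitz). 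If you take the (1)-then-(2) route you still need this lifting; ``applies verbatim to $O$'' is not quite enough.
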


\begin{proof}
The first part can be proven in the same (actually, slightly simpler) way as the second part, so we will spell out the proof for the second part only.
\par
 Let $G, X, \calPS, x_0$ be as in the definition above, and let $g$ be a contracting element. The fact that $\langle g \rangle x_0$ is a quasi-geodesic easily implies that $\langle g\rangle$ is a quasi-geodesic in $G$.
\par
In order to show that $g$ is Morse we now need to show the following. Let $\alpha$ be a $(\mu,c)$-quasi-geodesic in $G$ connecting $1$ to $g^n$ and let $\g$ be a special path from $x_0$ to $g^nx_0$. Then for each $h\in \alpha$ we have $d(\gamma x_0,hx_0)\leq K$, where $K$ depends on $g,\mu,c$ and the action of $G$. A constant depending on the said data will be referred to as universal.
\par
Let $\pi$ be a projection on $\g$ with constant $C$. Increase $C$ suitably and define $\pi':G\to\langle g\rangle$ in such a way that $d(\pi'(x)x_0,\pi(x))$ is bounded by $C$.
Let $\rho$ be a universal constant such that
$$d(\pi'(h),\pi'(h'))\leq \rho d(\pi(hx_0),\pi(h'x_0))+\rho$$
for each $h,h'\in G$. Let $r\in\N$ be the least integer so that $d(\alpha(j),\alpha(j+r))> \mu^2(\rho C+1)+c$ for each integer $j$ such that $j,j+r$ are in the domain of $\alpha$. Whenever $h=\alpha(j),h'=\alpha(j+r)$ we will say that $h,h'$ are consecutive. Notice that there is a universal bound $L$ on $d(hx_0,h'x_0)$ whenever $h,h'\in \alpha$ are consecutive.
Suppose that $h_0,\dots, h_n\in \alpha$ is a maximal chain of consecutive points such that $d(hx_0,\g)\geq kL+k^2$, for $k$ as in Lemma \ref{loccnst}. Notice that we can bound the distance of $h_0,h_n$ from $\g$ again by a universal constant. We wish to show that $n$ cannot be arbitrarily large. In fact, by Lemma \ref{loccnst}$-(3)$ we have $d(\pi(h_i(x_0)),\pi(h_{i+1}(x_0)))\leq C$, and hence $d(\pi(h_0(x_0)),\pi(h_{n}(x_0)))\leq nC$ so that $d(\pi'(h_0),\pi'(h_n))\leq n\rho C+\rho$. For $n$ large enough and in view of $d(h_i,h_{i+1})> \mu^2(\rho C+1)+c$ we get
$$d(h_0,h_n)\geq nr/\mu-c\geq\sum (d(h_i,h_{i+1})-c)/\mu^2-c>$$
$$n\rho C+\rho+d(h_0,\pi'(h_0))+d(h_n,\pi'(h_n))\geq d(h_0,h_n),$$
which is a contradiction.
We can bound $d(p,\g)$ in terms of $n,L,r$, so we are done.
\end{proof}


\section{Examples}\label{eg}

\subsection{Relatively hyperbolic group}


In this subsection we study contracting elements in relatively hyperbolic groups.

We will use the following well-known results, a proof of which is given for the convenience of the reader. As usual, we denote (almost) closest-point projections on $P$ by $\pi_{P}$.
\begin{lemma}
 \label{qconvcontr}
Suppose that $X$ is hyperbolic and $P\subseteq X$ is quasi-convex. Then there exists $C$, depending on the quasi-convexity constants only, so that for each $x,y\in X$ if $d(\pi_P(x), \pi_P(y))\geq C$ then $d([x,y],\pi_P(x)), d([x,y],\pi_P(y))\leq C$ for each geodesic $[x,y]$ from $x$ to $y$.
\end{lemma}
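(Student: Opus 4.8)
The plan is to prove this via the standard thin-triangles argument in a $\delta$-hyperbolic space, using the fact that quasi-convex sets behave, for the purpose of nearest-point projections, much like geodesics. First I would fix the hyperbolicity constant $\delta$ and the quasi-convexity constant $Q$ of $P$ (so every geodesic with endpoints in $P$ stays in $N_Q(P)$), and I would set up some auxiliary constant $C$ to be chosen at the end, large compared to $\delta$ and $Q$. Given $x,y\in X$, write $p=\pi_P(x)$, $q=\pi_P(y)$, and fix geodesics $[x,y]$, $[x,p]$, $[p,q]$, $[q,y]$.

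The key steps, in order: (1) Establish the ``projection stays close to the geodesic'' lemma in the quasi-convex setting — i.e. for any $z\in X$ and any $w\in P$, the concatenation $[z,\pi_P(z)]\cup[\pi_P(z),w]$ is a uniform quasi-geodesic; equivalently the Gromov product $(z\mid w)_{\pi_P(z)}$ is bounded by a constant depending only on $\delta, Q$. This follows because $[\pi_P(z),w]\subseteq N_Q(P)$, so if $\pi_P(z)$ were far from a geodesic $[z,w]$, the nearest-point projection of $z$ onto $[\pi_P(z),w]$ would give a point of $N_Q(P)$ strictly closer to $z$, contradicting (almost-)minimality of $\pi_P(z)$. (2) Apply this with $z=x$, $w=q$: the path $[x,p]\cup[p,q]$ is a uniform quasi-geodesic from $x$ to $q$; similarly $[x,p]\cup[p,q]\cup[q,y]$ is a uniform quasi-geodesic from $x$ to $y$ once $d(p,q)\geq C$ is large enough to absorb the Gromov-product defects at $p$ and at $q$ (this is where the hypothesis $d(\pi_P(x),\pi_P(y))\geq C$ is used: a concatenation of two quasi-geodesics meeting at bounded Gromov product, with a long enough middle segment, is again a quasi-geodesic). (3) By the Morse/stability lemma for quasi-geodesics in hyperbolic spaces, the genuine geodesic $[x,y]$ lies within Hausdorff distance $D=D(\delta,Q)$ of this concatenated quasi-geodesic; in particular $[x,y]$ passes within $D$ of $p$ and within $D$ of $q$. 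Taking $C$ larger than $D$ (and larger than all the constants produced along the way) gives the claimed bound $d([x,y],\pi_P(x)), d([x,y],\pi_P(y))\leq C$.

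I expect the main obstacle to be step (1) together with making step (2) quantitatively clean: one must be careful that ``nearest-point projection onto a quasi-convex set'' is only coarsely well-defined, so all the intermediate inequalities carry additive errors, and one needs the hypothesis bound $C$ to be chosen \emph{after} all these errors are named. Concretely, the delicate point is verifying that $[x,p]\cup[p,q]\cup[q,y]$ is a $(\lambda,\epsilon)$-quasi-geodesic with $\lambda,\epsilon$ depending only on $\delta$ and $Q$: this requires that the three pieces are uniform quasi-geodesics (from step (1) and from $P$ being a path image inside $N_Q(P)$), that consecutive pieces meet with uniformly bounded Gromov product, and — crucially — that the middle piece $[p,q]$ is long enough, which is exactly what $d(p,q)\geq C$ buys. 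Once that local-to-global statement for quasi-geodesics is in hand (a standard lemma, e.g. via the ``no backtracking'' criterion), the rest is a direct application of stability of quasi-geodesics, and the dependence of the final constant on the quasi-convexity data only is automatic since every constant introduced depended only on $\delta$ and $Q$.
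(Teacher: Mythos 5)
Your proposal is correct, and it shares with the paper's proof the one genuinely non-trivial input, namely your step (1): for $z\in X$ and $w\in P$, the Gromov product $(z\mid w)_{\pi_P(z)}$ is bounded in terms of $\delta$ and the quasi-convexity constant (equivalently, the geodesic $[z,w]$ passes uniformly close to $\pi_P(z)$; the paper phrases this as ``points on $[\pi(x),\pi(y)]$ farther than $K$ from the endpoints are not $2\delta$-close to $[x,\pi(x)]\cup[y,\pi(y)]$''). Where the two arguments diverge is in how they exploit this fact. The paper stays entirely at the level of thin polygons: it considers the geodesic quadrangle on $x,\pi(x),\pi(y),y$, notes that $[\pi(x),\pi(y)]$ lies in the $2\delta$-neighbourhood of the other three sides, and uses the Gromov product bound to rule out the sides $[x,\pi(x)]$ and $[y,\pi(y)]$ for a point on $[\pi(x),\pi(y)]$ at controlled distance from $\pi(x)$, forcing that point onto $N_{2\delta}([x,y])$. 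Your route instead upgrades the Gromov product bound to the statement that the broken path $[x,\pi(x)]\cup[\pi(x),\pi(y)]\cup[\pi(y),y]$ is a uniform quasi-geodesic once the middle piece is long enough, and then invokes stability of quasi-geodesics (Morse lemma). Both are standard and correct; the paper's version is more elementary in that it avoids the local-to-global criterion for concatenations and the Morse lemma, while yours is arguably more robust and closer in spirit to the concatenation technique (Lemma \ref{metriconcat}) that the paper uses later for non-geodesic path systems. One small caution if you were to write your version out in full: since $\pi_P$ is only coarsely defined, the constants in step (1) and in the ``long enough middle segment'' threshold must be fixed before $C$ is chosen, as you already flag, and the middle segment $[p,q]$ only lies in $N_Q(P)$ rather than in $P$, so the Gromov product estimate at $q$ should be applied with the $N_Q$-ambiguity absorbed into the constants.
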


\begin{proof}
Set $\pi=\pi_P$ and let $\delta$ be a hyperbolicity constant for $X$.
As quadrangles in $X$ are $2\delta$-thin, we have that $[\pi(x),\pi(y)]$ is contained in the $2\delta$-neighbourhood of $[x,y]\cup[x,\pi(x)]\cup [y,\pi(y)]$. Due to our choice of $\pi$, points $[\pi(x),\pi(y)]$ farther than $K$ from the endpoints are not $2\delta$-close to $[x,\pi(x)]\cup [y,\pi(y)]$, where $K$ depends on the quasi-convexity constants of $P$. Hence, for $d(\pi(x),\pi(y))>2K+1$, there is a point on $[x,y]$ which is $2\delta$-close to the point on $[\pi(x),\pi(y)]$ at distance $K+1$ from $\pi(x)$, and so we have $d(\alpha,\pi(x))\leq K+2\delta+1$.
\end{proof}

We will say that an infinite order element of a relatively hyperbolic group is \emph{hyperbolic} if it not conjugate into any $H_i$.

\begin{prop}\label{rhcontr}
Let $G$ be a relatively hyperbolic group and let $\calPS$ be the collection of the geodesics in $Bow(G)$. Then an element of $G$ is contracting for the path system $(Bow(G),\calPS)$ if and only if it is hyperbolic.
\end{prop}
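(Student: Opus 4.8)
The plan is to check the two conditions in the definition of a $\calPS$-contracting element against the standard description of the $G$-action on $Bow(G)$. Recall that $Bow(G)$ is a hyperbolic space on which $G$ acts so that $(Bow(G),\calPS)$ is a path system, $\calPS$ being the collection of all geodesics of $Bow(G)$. The one external input is the well-known characterization of hyperbolic elements of a relatively hyperbolic group in terms of this action: an infinite order $g\in G$ is hyperbolic (not conjugate into any $H_i$) if and only if it acts as a hyperbolic (loxodromic) isometry of $Bow(G)$, equivalently --- since a self-isometry of a hyperbolic space is loxodromic exactly when it has quasi-geodesic orbits --- if and only if $\langle g\rangle x_0$ is a quasi-geodesic in $Bow(G)$ for some, hence any, $x_0$. (One implication holds because elements conjugate into the $H_i$ act elliptically on $Bow(G)$; the other uses that this action has no parabolic elements, e.g.\ because it is acylindrical.)

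Assume first that $g$ is hyperbolic. Then $\langle g\rangle x_0$ is a quasi-geodesic, which is the first requirement for $g$ to be $\calPS$-contracting. For the second, fix a geodesic $[x_0,gx_0]$ of $Bow(G)$ and set $A=\bigcup_{n\in\Z}g^n[x_0,gx_0]$. This set is $g$-invariant, contains $x_0$, lies within finite Hausdorff distance of $\langle g\rangle x_0$ and is hence a quasi-geodesic, and $g$ acts coboundedly on it because the $\langle g\rangle$-translates of the compact arc $[x_0,gx_0]$ cover $A$. Since $Bow(G)$ is hyperbolic, $A$ is quasi-convex with constants depending only on the hyperbolicity constant of $Bow(G)$ and on the quasi-geodesic constants of $\langle g\rangle x_0$. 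Applying Lemma~\ref{qconvcontr} with $P=A$, the (almost) closest-point projection $\pi_A\colon Bow(G)\to A$ satisfies property $(2)$ of Definition~\ref{maindef} with a constant $C$ depending only on those quasi-convexity constants, and property $(1)$ holds too since $\pi_A$ is an almost closest-point projection. Therefore $A$ is a $\calPS$-contracting $g$-invariant axis on which $g$ acts coboundedly, so $g$ is $\calPS$-contracting.

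Conversely, suppose $g$ is $\calPS$-contracting. Then $g$ has infinite order and, for any basepoint $x_0$, the orbit $\langle g\rangle x_0$ is a quasi-geodesic in $Bow(G)$; by the equivalence recalled above, $g$ is hyperbolic. This establishes both implications.

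The only substantial ingredient is the classical identification of the hyperbolic elements of $G$ with the loxodromic isometries of $Bow(G)$; granting it, the contracting structure for a hyperbolic $g$ is produced directly by Lemma~\ref{qconvcontr}. The one point to keep an eye on is uniformity of constants: one must ensure that the quasi-convexity constants of $A$ --- and hence its contraction constant --- are controlled purely by the (fixed) hyperbolicity constant of $Bow(G)$ together with the quasi-geodesic constants of $\langle g\rangle x_0$, so that $A$ is a contracting set in the precise sense of Definition~\ref{maindef}.
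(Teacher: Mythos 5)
Your proof follows essentially the same route as the paper: both hinge on the facts that (i) hyperbolic elements of $G$ act loxodromically on $Bow(G)$ while non-hyperbolic ones do not have quasi-geodesic orbits, and (ii) any quasi-geodesic in a hyperbolic space is contracting via Lemma~\ref{qconvcontr}. Your explicit construction of the axis $A=\bigcup_n g^n[x_0,gx_0]$ is a slightly more concrete packaging of what the paper does.

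However, the parenthetical justification you give for the classical fact (i) contains substantive errors. Infinite order elements conjugate into a peripheral $H_i$ act \emph{parabolically} on $Bow(G)$, not elliptically: starting from a basepoint in the Cayley graph, the orbit of such an element has distance growing roughly like $\log n$ in the cusped metric, so it is unbounded yet far from a quasi-geodesic. In particular the action on $Bow(G)$ \emph{does} have parabolic elements and is therefore \emph{not} acylindrical (acylindrical actions on hyperbolic spaces admit only elliptic and loxodromic elements); the acylindrical action of a relatively hyperbolic group lives on a different space, such as the coned-off Cayley graph. The actual content needed is that every element acting parabolically on $Bow(G)$ fixes the ideal point of some combinatorial horoball and hence is conjugate into a peripheral subgroup --- this is precisely the delicate point that the paper takes the trouble to prove within this proposition. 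Since you invoke the correct conclusion as an external fact, your overall argument stands, but the stated reasons for it are wrong and should be replaced by a citation or by an argument along the lines of the one in the paper.
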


\begin{proof}
If an element is finite order or conjugate into a peripheral subgroup then clearly its orbit is not a bi-infinite quasi-geodesic.

On the other hand, hyperbolic elements are known to act hyperbolically on the Bowditch space, so that each orbit is a bi-infinite quasi-geodesic, see e.g. \cite[Definition 1-(3)]{Bow-99-rel-hyp}. Here is a proof of this fact. Let $g$ be a hyperbolic element. As $g$ has infinite order we only have to show that it cannot act parabolically. Indeed, we claim that if an element of $G$ acts parabolically on $Bow(G)$ then it fixes the parabolic point corresponding to some combinatorial horoball, and this easily implies that it must be conjugate into a peripheral subgroup. If $g$ acted parabolically fixing a point that is not the limit point of a combinatorial horoball, then there would be a geodesic ray $\g$ with $\g\cap G$ unbounded (we regard $G$ as a subset of $Bow(G)$) so that for each $n$ there is a point $h\in \g\cap G$ so that $d(h^{-1}g^ih,1)=d(g^ih,h)$ is bounded  by some constant $C$ which depends on the hyperbolicity constant of $Bow(G)$. But this is not possible because there are finitely many elements of $G$ in the ball of radius $C$ in $Bow(G)$ around $1$.

Finally, every geodesic in a given hyperbolic space (and hence every quasi-geodesic as quasi-geodesics are within bounded Hausdorff distance from geodesics) is contracting, see Lemma \ref{qconvcontr}.
\end{proof}

\subsection{Mapping class group}

In this subsection we will assume familiarity with the notion of curve complex, marking complex and hierarchy paths. We will use results from \cite{MM1,MM2}, see also \cite{Bow-cchyp}.

\begin{prop}
 Let $\calM(S)$ be the marking complex of the closed orientable punctured surface $S$ of genus $g$ with $p$ punctures satisfying $3g+p\geq 5$ and let $\calH(S)$ be the collection of all subpaths of hierarchy paths. An element of $MCG(S)$ is contracting for the path system $(\calM(S),\calH(S))$ if and only if it is pseudo-Anosov.
\end{prop}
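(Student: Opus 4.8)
The plan is to mirror the structure of the relatively hyperbolic case (Proposition \ref{rhcontr}), replacing the Bowditch boundary with the curve complex $\calC(S)$ and using the hierarchy machinery of \cite{MM1,MM2} to pass back and forth between the marking complex $\calM(S)$ and $\calC(S)$. First I would dispose of the easy direction: if $g\in MCG(S)$ is reducible or finite order, then $g$ fixes (a power fixes) a multicurve, hence $\langle g\rangle$ fails to act on $\calC(S)$ with unbounded orbits; since hierarchy paths between $g^n$-translates of a marking project to reparametrized quasi-geodesics in $\calC(S)$ (the "Large Link" / progress along geodesics in subsurface projections), the $\calH(S)$-orbit of a base marking cannot be a bi-infinite quasi-geodesic in $\calM(S)$, so $g$ is not $\calH(S)$-contracting.

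For the main direction, suppose $g$ is pseudo-Anosov. The axis $A$ will be the $g$-orbit $\{g^n m_0\}$ of a fixed marking $m_0$ (or, to match the definition's remark about large translation length, a $g$-orbit inside the quasi-geodesic axis produced by Masur–Minsky stability); $g$ acts coboundedly on $A$ and $A$ is $g$-invariant by construction, and by \cite{MM2} $A$ is a quasi-geodesic in $\calM(S)$, so condition (1) of the definition of $\calPS$-contracting holds. The substantive point is condition (2): I need a projection $\pi_A:\calM(S)\to A$ satisfying Definition \ref{maindef}. I would build $\pi_A$ from the closest-point projection $\pi_{\calC}$ in the curve complex composed with the coarse identification of $A$ with a quasi-axis in $\calC(S)$: since $g$ is pseudo-Anosov, it acts loxodromically on the $\delta$-hyperbolic space $\calC(S)$ (Masur–Minsky, and $\calC(S)$ hyperbolic by \cite{MM1}), and its quasi-axis $\ed_g$ in $\calC(S)$ is quasi-convex, so Lemma \ref{qconvcontr} gives the contracting property for $\ed_g$ with respect to geodesics of $\calC(S)$. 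To transport this to $\calM(S)$ I would use that a hierarchy path $H$ from $m$ to $m'$ has the property that its shadow $\pi_{\calC}(H)$ $K$-fellow-travels a geodesic $[\pi_{\calC}(m),\pi_{\calC}(m')]$ in $\calC(S)$ (with $K$ uniform); hence if $d_{\calC}(\pi_{\calC}(m),\ed_g)$ and $d_{\calC}(\pi_{\calC}(m'),\ed_g)$ have far-apart projections to $\ed_g$, then $\pi_{\calC}(H)$ passes uniformly close to those projection points, and — crucially — when the shadow of $H$ is close to a point $v\in\ed_g$, there must be a vertex $m''$ along $H$ with $\pi_{\calC}(m'')$ close to $v$, and by cobounded action and the fact that markings with a fixed base curve that are close in $\calC(S)$-distance-along-the-axis stay $\calM(S)$-close to the axis (this is where one invokes that $g$ is pseudo-Anosov, so no subsurface accumulates unbounded projection along the axis — the Behrstock inequality / bounded geodesic image theorem), $m''$ is uniformly $\calM(S)$-close to $A$. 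Packaging this yields a $\pi_A$ satisfying both clauses of Definition \ref{maindef}, so $g$ is $\calH(S)$-contracting.

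The step I expect to be the main obstacle is precisely this transport of contractivity from $\calC(S)$ to $\calM(S)$: one must control all subsurface projections along the axis of a pseudo-Anosov simultaneously to conclude that the hierarchy path, once its curve-complex shadow comes near the axis, actually has a vertex that is close to the axis \emph{in the marking complex}, not merely in $\calC(S)$. The cleanest route is to cite the distance formula of \cite{MM2} together with the Bounded Geodesic Image theorem: a pseudo-Anosov has a quasi-axis in $\calM(S)$ along which every proper-subsurface projection is uniformly bounded (no essential subsurface is "active" for longer than a uniform amount of progress), so $\calM(S)$-distance to the axis is, up to uniform additive/multiplicative error, governed by $\calC(S)$-distance to the axis plus the bounded subsurface contributions — which is exactly the kind of statement one needs to invoke Lemma \ref{qconvcontr} and conclude. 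Alternatively, one can quote directly that pseudo-Anosovs are Morse/contracting in $\calM(S)$ from \cite{Be-asgeommcg} (or \cite{MM2}) and then observe that the resulting quasi-axis, together with closest-point projection, satisfies the hypotheses of Lemma \ref{4.24} for hierarchy paths after noting hierarchy paths are uniform quasi-geodesics; this shortcut trades self-containedness for brevity, and I would present the argument in that form, flagging the Morse property of pseudo-Anosovs as the external input.
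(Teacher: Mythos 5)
Your argument for the \emph{only if} direction has a genuine gap. You argue that if $g$ is reducible or finite order then the $\langle g\rangle$-orbit in $\calC(S)$ is bounded, and you conclude from this that the orbit of a base marking in $\calM(S)$ ``cannot be a bi-infinite quasi-geodesic.'' This inference is false: for a Dehn twist $T_c$, the shadow of $\{T_c^n m_0\}$ in $\calC(S)$ is bounded, but by the Masur--Minsky distance formula the annular projection to the annulus about $c$ grows linearly in $n$, so $\{T_c^n m_0\}$ \emph{is} a bi-infinite quasi-geodesic in $\calM(S)$. Boundedness of the $\calC(S)$-shadow therefore does not rule out condition (1) of the definition. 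What fails for a non-pseudo-Anosov (and what the paper actually uses) is condition (2): such an element is not Morse --- a power has infinite index in its centralizer --- and contracting elements are Morse by Lemma~\ref{morse}. You need to argue through the Morse property, not through the $\calC(S)$-shadow.

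For the \emph{if} direction, your approach is in the same spirit as the paper's, but you defer exactly the step that is the content of the proof. The paper reduces the proposition to the fact that any hierarchy path between a $D$-transverse pair of markings is contracting with constant $C(D)$, defines the projection via the main geodesic $H$ in $\calC(S)$ (closest-point project in $\calC(S)$, then pick a point on the hierarchy path with the same shadow), and then runs a careful $K$-large-domain analysis: using $D$-transversality to show that, once the curve-complex shadows are far apart, no proper subsurface can be simultaneously large for the relevant pairs, so the distance formula bounds the $\calM(S)$-distance to the projection point. Your sketch names this as ``the main obstacle'' and proposes either to wave at the Bounded Geodesic Image theorem or to quote contraction of pseudo-Anosov axes from \cite{Be-asgeommcg} and then invoke Lemma~\ref{4.24}. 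The latter route is not immediate either: Lemma~\ref{4.24} concerns the path system of all geodesics of a geodesic space, whereas the path system here is $\calH(S)$ (subpaths of hierarchy paths), which are only uniform quasi-geodesics; you would need to bridge that gap, essentially by a Morse-type argument. So the forward direction as written is a plausible outline but omits precisely the subsurface-projection bookkeeping the paper carries out, and the backward direction contains an incorrect step.
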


Notice that the proposition and Lemma \ref{morse} give yet another proof that pseudo-Anosov elements are Morse \cite{Be-asgeommcg,DMS-div}.

\begin{rem}
 We remark that in view of the proof of \cite[Proposition 8.1]{BF} (see the sixth to last line) and Lemma \ref{4.24} the collection of all geodesics in Teichm\"uller space endowed with the Weil-Petersson metric has the property that all axes of pseudo-Anosov elements are contracting.
\end{rem}

It is well known that hierarchy paths are quasi-geodesics with uniform constant, and that any non-pseudo-Anosov element is not Morse as, up to passing to a power, it has infinite index in its centralizer.
\par
Recall from \cite{Be-asgeommcg} that a pair of points $(\mu_1,\mu_2)\in \calM(S)$ is $D$-transverse if for each $Z\subsetneq S$ we have $d_{C(Z)}(\mu_1,\mu_2)\leq D$.
The proposition follows directly from the lemma below and the fact that pairs of points lying on the orbit of a pseudo-Anosov element are $D$-transverse.

\begin{lemma}
 For each $D$-transverse pair $\mu_1,\mu_2$ each hierarchy path $[\mu_1,\mu_2]$ is $\calH(S)$-contracting with constant $C=C(D)$.
\end{lemma}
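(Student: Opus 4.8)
The plan is to use the subsurface projection machinery of Masur--Minsky together with Behrstock's inequality to show that a closest-point projection onto a hierarchy path between a $D$-transverse pair behaves like a contraction, and then invoke Lemma~\ref{4.24} to conclude. Let $\gamma = [\mu_1,\mu_2]$ be a hierarchy path for the $D$-transverse pair $(\mu_1,\mu_2)$. The key structural fact I would use is that, since the pair is $D$-transverse, the only ``large'' subsurface projection distances along $\gamma$ come from the main surface $S$ itself; concretely, $d_{C(S)}(\mu_1,\mu_2)$ is large (comparable to the length of $\gamma$) while $d_{C(Z)}(\mu_1,\mu_2) \leq D$ for every proper $Z\subsetneq S$. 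Thus a hierarchy path between a $D$-transverse pair is, up to uniformly bounded error, a reparametrized quasigeodesic in $C(S)$ pulled back to $\calM(S)$, via the coarse Lipschitz retraction $\calM(S)\to C(S)$ given by taking the base curve.

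First I would define the candidate projection $\pi:\calM(S)\to\gamma$ as an (almost) closest-point projection with respect to the marking metric, and verify the two hypotheses of Lemma~\ref{4.24} with a constant depending only on $D$ (and the topology of $S$). The first hypothesis, $d(x,\pi(x))\leq d(x,\gamma)+C$, is automatic by definition of almost closest-point projection. The substantive hypothesis is $\mathrm{diam}(\pi(B_d(x)))\leq C$ where $d = d(x,\gamma)$: I would prove this by contradiction. If two points $y,y'$ within distance $d$ of $x$ projected to points $\pi(y),\pi(y')$ on $\gamma$ that are far apart in the marking metric, then by the distance formula of Masur--Minsky there would be a proper subsurface $Z$ in which $d_{C(Z)}(\pi(y),\pi(y'))$ is large, or else $d_{C(S)}(\pi(y),\pi(y'))$ is large. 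In the $Z$-case: since $(\mu_1,\mu_2)$ is $D$-transverse, $d_{C(Z)}(\mu_1,\mu_2)\leq D$, so by Behrstock's inequality the marking $x$ must project $C(Z)$-close to one of $\mu_1$ or $\mu_2$ (in fact to the near endpoint of the segment $[\pi(y),\pi(y')]$ of $\gamma$), forcing $d(x,\gamma)$ to be small relative to $d(\pi(y),\pi(y'))$ — then a careful version of this estimate yields $d(x,\gamma)\gtrsim d(\pi(y),\pi(y'))$, contradicting $d(\pi(y),\pi(y')) \gg d$. In the $C(S)$-case: the retraction to $C(S)$ is coarsely Lipschitz, $\gamma$ maps coarsely onto a geodesic in $C(S)$ (which is hyperbolic, by Masur--Minsky), and the nearest-point projection in a hyperbolic space is coarsely distance-decreasing on balls, so again $d_{C(S)}(\pi(y),\pi(y'))$ is controlled by $d_{C(S)}(x,\gamma)\leq d(x,\gamma)$. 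Combining the two cases via the distance formula bounds $d(\pi(y),\pi(y'))$ in terms of $d$ plus a constant depending on $D$.

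The main obstacle I anticipate is the bookkeeping in the subsurface-projection case: one must argue that closest-point projection in the marking metric interacts correctly with projection in each $C(Z)$ — i.e. that $\pi(x)$ has the ``right'' subsurface coordinates — and quantify the interplay between Behrstock's inequality, the bounded geodesic image theorem, and the fact that hierarchy paths realize the subsurface distances along them. In particular one has to rule out the possibility that $x$ has large projection to some $Z$ that is not ``seen'' by $\gamma$ but still causes $\pi$ to oscillate; this is exactly where $D$-transversality of $(\mu_1,\mu_2)$ is essential, since it caps $d_{C(Z)}(\mu_1,\mu_2)$ and hence, via Behrstock, caps how much any $Z$ other than $S$ can contribute to the geometry of $\gamma$. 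Once these estimates are assembled the constant $C=C(D)$ is manifestly a function of $D$ and the complexity of $S$ only, and Lemma~\ref{4.24} upgrades the two bullet-point properties to full $\calH(S)$-contractivity with constant depending only on $D$, which is the assertion of the lemma. (One mild subtlety: Lemma~\ref{4.24} is stated for geodesics in a geodesic space, so I would either apply it in a geodesic model of $\calM(S)$ and then transfer to hierarchy paths using that hierarchy paths are uniform quasigeodesics and quasigeodesics with the same endpoints fellow-travel in this setting, or re-run the elementary argument of Lemma~\ref{4.24} directly with $\calH(S)$ in place of geodesics.)
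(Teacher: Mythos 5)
Your proposal is in the right spirit — both you and the paper lean on the distance formula, $D$-transversality, and the hierarchy machinery — but the reduction to Lemma~\ref{4.24} introduces a genuine gap that you flag but do not resolve, and the ball-projection argument is too thin at exactly the point where the work is.

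First, the geodesic/quasi-geodesic issue. Lemma~\ref{4.24} concludes contraction only for the path system of \emph{geodesics} of $\calM(S)$, whereas the statement you must prove is contraction for $\calH(S)$, the subpaths of hierarchy paths. Your proposed fix ``(b)'' — that quasigeodesics with the same endpoints fellow-travel in this setting — is circular: $\calM(S)$ is not hyperbolic, and the whole point of showing that a $D$-transverse hierarchy path is contracting is to establish exactly such fellow-traveling for quasi-geodesics between arbitrary points of $\calM(S)$ whose projections to $\gamma$ are far apart. You cannot assume the conclusion. Fix ``(a)'' (re-run Lemma~\ref{4.24}'s argument with $\calH(S)$ in place of geodesics) is in principle correct, but at that point you are essentially committing to verifying Definition~\ref{maindef} directly for all hierarchy paths, which is what the paper does; Lemma~\ref{4.24} no longer shortens anything.

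Second, the ball-projection estimate $\mathrm{diam}(\pi(B_d(x)))\leq C$ is where the substance lies and is handled too quickly. The key reduction — that proper subsurfaces $Z\subsetneq S$ contribute only $D+O(1)$ to $d_{C(Z)}(\pi(y),\pi(y'))$ because the hierarchy path realizes subsurface distances of its endpoints — is correct and does not in fact need Behrstock's inequality; invoking Behrstock here muddies what is really a uniform bound along $\gamma$. But the remaining $C(S)$-case is not trivial: $d(x,\gamma)$ in the \emph{marking} metric need not be comparable to $d_{C(S)}(x,H)$ (it can be dominated by a single proper-subsurface term $d_Y(x,\gamma)$), so the assertion that ``$d_{C(S)}(\pi(y),\pi(y'))$ is controlled by $d_{C(S)}(x,\gamma)\leq d(x,\gamma)$'' does not directly yield what you need. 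Making this work requires an analysis along the lines of what the paper does: comparing large domains for the pair $(\nu_1',\nu_2')$ against large domains of the connecting ramps to $\gamma$, and using the bounded geodesic image theorem to show these ramps cannot share large domains. In short, the paper sidesteps Lemma~\ref{4.24} entirely, defines the projection through $C(S)$ (not as a metric closest-point map, which would require further justification), and verifies Definition~\ref{maindef} directly for arbitrary hierarchy paths via a large-domain bookkeeping argument; your proposal contains the right ingredients but omits the large-domain analysis that actually closes the argument and makes the constant depend only on $D$.
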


\begin{proof}
Let $B$ be such that if $x,y$ lie on the (subpath of a) hierarchy path $[p,q]$ with main geodesic $H$ then
\begin{enumerate}
 \item for each subsurface $Y\subseteq S$ we have $d_{\calC(Y)}(x,y)\leq d_Y(p,q)+B$,
 \item $d_{\calC(S)}(x,H)\leq B$,
 \item for any $\gamma\in H$ there exists $z$ such that $d_{\calC(S)}(x,H)\leq B$.
\end{enumerate}

 Define $\pi:\calM(S)\to [\mu_1,\mu_2]$ as follows. Consider $\mu\in\calM(S)$ and let $\gamma_\mu=\pi_S(\mu)$. Pick $\alpha_\mu\in H$ (the main geodesic for the given hierarchy path) such that $d_{C(S)}(\gamma_\mu,\alpha_\mu)=d_{C(S)}(\gamma_\mu,H)$. Finally, choose $\nu\in[\mu_1,\mu_2]$ in such a way that $\pi_{S}(\nu)$ is $B$-close to $\alpha_\mu$ and set $\pi(\mu)=\nu$.
\par
Pick any $\nu_1,\nu_2\in \calM(S)$. Suppose $d(\pi(\nu_1),\pi(\nu_2))\geq C(D)$, where $C(D)$ will be determined later. Notice that $d_{C(S)}(\alpha_{\nu_1},\alpha_{\nu_2})$ is bounded from below by a linear function of $C(D)$, by the distance formula and $D$-transversality, so that for $C(D)$ large enough we can assume $d_{C(S)}(\alpha_{\nu_1},\alpha_{\nu_2})\geq 100\delta$, where $\delta$ is the hyperbolicity constant of $C(S)$. Consider a hierarchy path $[\nu_1,\nu_2]$ (more precisely containing $\nu_1,\nu_2$, but this does not affect what follows) and let $H'$ be the main geodesic. We have that $H'$ contains points $\beta_1,\beta_2$ which are $B$-close to the projections of $\nu'_1,\nu'_2\in [\nu_1,\nu_2]$ on $C(S)$ and such that $d_{C(S)}(\alpha_{\nu_i}, \beta_i)\leq 10\delta$.

\begin{figure}[ht]
\centering
 \includegraphics[scale=0.6]{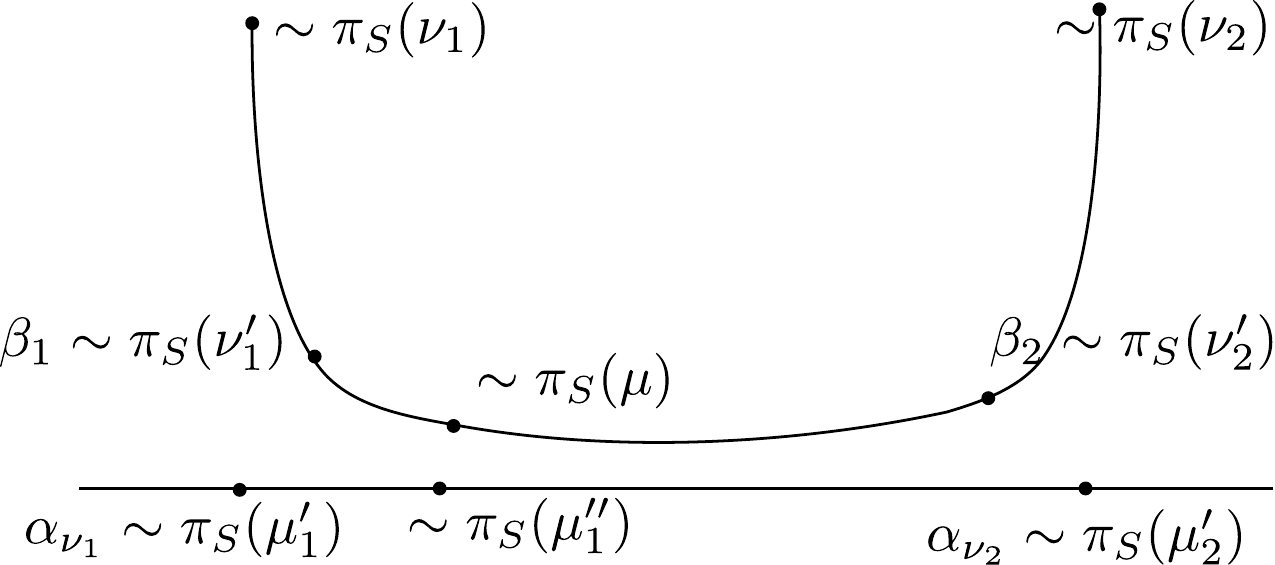}
\caption{}
\end{figure}
Our aim is to show that any point $\mu$ on $[\nu'_1,\nu'_2]$ such that $20\delta+2B+10\leq d_{\calC(S)}(\nu,\beta_1)\leq 20\delta+3B+20$ has the property that $d(\mu,\pi(\nu_1))\leq C(D)$ (an analogous property holds for $\nu_2$).
\par
In order to show this, we will now analyse the $K$-large domains for the pair $\nu'_1,\nu'_2$, where $K>3(D+B)$ (a $L$-large domain for a pair of markings $\rho_1,\rho_2$ is a subsurface $Y\subseteq S$ so that $d_{\calC(Y)}(\rho_1,\rho_2)\geq L$). First, we claim that there are no common $(K/3)$-large domains for the pairs $\mu'_1,\nu'_1$ and $\mu'_2,\nu'_2$, where $\mu'_i\in[\mu_1,\mu_2]$ projects $B$-close to $\alpha_{\nu_i}$.
\par
In fact, any $(K/3)$-large domain $X_i$ for the pair $\mu'_i,\nu'_i$ is contained in some $S\backslash\gamma_i$, where $\gamma_i$ is a simple closed curve appearing in a hierarchy path connecting $\pi_S(\mu'_i)$ and $\nu'_i$. This implies that $X_1\neq X_2$ as $d_{C(S)}(\gamma_1,\gamma_2)\geq 3$ for $C(D)$ large enough.
\par
We can now use the fact that there are no $D$-large domains for the pair $\mu'_1,\mu'_2$. Suppose $d_{C(Y)}(\nu'_1,\nu'_2)\geq K$, for some subsurface $Y\subsetneq S$. Then (at least) one of the following must hold: $d_{C(Y)}(\nu'_1,\mu'_1)\geq K/3$, $d_{C(Y)}(\mu'_1,\mu'_2)\geq K/3$ or $d_{C(Y)}(\nu'_1,\mu'_1)\geq K/3$. However, the second inequality does not hold by hypothesis. Hence, any $K$-large domain for $\nu'_1,\nu'_2$, and so any $(K+B)$-large domain for any pair of points on $[\nu'_1, \nu'_2]$, is a $(K/3)$-large domain for $\mu'_1,\nu'_1$ or $\mu'_2,\nu'_2$ (but not both).
\par
With a similar argument we get that for each $\nu\in[\nu'_1,\nu'_2]$ and each $\mu\in[\mu'_1,\mu'_2]$ all $(4K/3+D+2B)$-large domains $Y\subsetneq S$ for $\mu,\nu$ are $(K/3)$-large domains for $\mu'_1,\nu'_1$ or $\mu'_2,\nu'_2$. Choosing $\nu$ as above we have that $\pi_S(\nu)$ is far enough from $\beta_1$ and $\beta_2$ to guarantee that no $(4K/3+D+2B)$-large domain $Y\subsetneq S$ for $\nu,\mu$ is a $K$-large domain for $\mu'_i,\nu'_i$, where $\mu\in[\mu'_1,\mu'_2]$ is such that $20\delta+2B+10\leq d_{C(S)}(\mu,\mu'_i)\leq 20\delta+3B+20$. By the distance formula $d(\nu,\mu'_1)$ can be bounded in terms of $\delta$ and $B$, so we are done.
\end{proof}

\subsection{Graph manifolds}
A graph manifold is a compact connected $3$-manifold (possibly with boundary) which admits a decomposition into Seifert fibred surfaces, when cut along a collection of embedded tori and/or Klein bottles. In particular a graph manifold is a $3$-manifold whose geometric decomposition admits no hyperbolic part.
\par
Let $M$ be a graph manifold. It is known \cite{KL} that its universal cover $\tilM$ is bi-Lipschitz equivalent to the universal cover $\tilN$ of a \emph{flip} graph manifold $N$, that is to say a graph manifold with some special properties (among which a metric of nonpositive curvature). We will not need the exact definition of such manifolds, but we need to know that we can choose a bi-Lipschitz equivalence $\phi:\tilM\to\tilN$ that preserves the geometric components. In \cite{3mancones}, a family of paths $\calPS(N)$ in $\tilN$ has been defined, and those paths satisfy the following:
\begin{lemma}
\label{nice}
\hskip 0cm
\begin{enumerate}
 \item All paths in $\calPS(N)$ are bi-Lipschitz, with controlled constant;
 \item any subpath of a path in $\calPS(N)$ is again in $\calPS(N)$;
 \item if for $i=1,2$ $\alpha_i$ is a special path connecting some point in $X_{w_i}$ to some point in $X_{w'_i}$ and the vertex $v$
\begin{itemize}
 \item lies on the geodesic connecting $w_i,w'_i$, and
 \item $d(v,w_i),d(v,w'_i)\geq 2$,
\end{itemize}
  then $\alpha_1\cap X_v=\alpha_2\cap X_v$.
\end{enumerate}
\end{lemma}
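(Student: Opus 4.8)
The plan is to recall from \cite{3mancones} the construction of the family $\calPS(N)$ and then read off the three assertions; the first two are essentially built into the construction, and all the content sits in the third. Recall the picture. Let $T$ be the Bass--Serre tree of the flip graph manifold $N$, so that $\tilN$ decomposes into \emph{pieces} $\{X_v\}_{v\in V(T)}$, each isometric to a product $\widetilde{\Sigma_v}\times\R$ with $\widetilde{\Sigma_v}\subseteq\matH^2$ a convex region bounded by geodesics and $\R$ Euclidean; adjacent pieces meet along a \emph{wall} of the form $\ell\times\R$, and the identification is the \emph{flip}, interchanging the $\R$-factor on one side with the direction of a boundary geodesic on the other. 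A path in $\calPS(N)$ from $p\in X_{v_0}$ to $q\in X_{v_n}$ is built along the tree geodesic $v_0,\dots,v_n$ as a concatenation $\alpha_0\ast\cdots\ast\alpha_n$ with $\alpha_j\subseteq X_{v_j}$, where each $\alpha_j$ (for $0<j<n$) is a prescribed ``model path'' inside the convex piece $X_{v_j}$ joining the incoming wall to the outgoing one, $\alpha_0$ and $\alpha_n$ join an endpoint to the single relevant wall, and the model path has a controlled shape whose $\R$-coordinate is dictated, through the flip, by the model path in the adjacent piece on the side one came from.

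Granting this, assertion $2)$ is immediate: truncating $\alpha$ at an interior point produces a path of exactly the same form for the truncated pair of endpoints (a subpath of a model path is a model path, and the concatenation pattern along a sub-geodesic of $T$ is inherited), so it again lies in $\calPS(N)$. Assertion $1)$ is the local-to-global estimate for such concatenations: each model path is bi-Lipschitz with a constant depending only on the finitely many isometry types of pieces of $N$, the walls are crossed coarsely transversally by $\alpha$, and the pieces are convex, so the standard Morse-type argument for a piecewise path crossing a tree of convex subspaces yields a uniform bi-Lipschitz constant for $\alpha$.

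Assertion $3)$ is the substantive one, and it is where the flip condition is used. The claim is that if $v$ lies on the tree geodesic between the two endpoint pieces of a special path $\alpha$, with $d(v,w_i),d(v,w'_i)\geq 2$, then $\alpha\cap X_v$ is a subset of $X_v$ depending on $X_v$ alone — in particular not on which walls of $X_v$ the path enters and leaves by, nor on the endpoints. The reason is that, once one is two pieces away from the endpoints on each side, the normalization of the model paths in \cite{3mancones} forces the portion of $\alpha$ inside $X_v$ to be ``rigid'': the flip makes the $\R$-coordinate of $\alpha$ in $X_v$ equal to a boundary-geodesic coordinate in the neighbouring piece $X_{v^-}$ which, because $v^-$ is again an internal piece, is itself pinned down locally, and iterating this bookkeeping shows that the apparent dependence of $\alpha\cap X_v$ on the adjacent walls and on the continuation of the path cancels out. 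Once $\alpha\cap X_v$ depends on $X_v$ alone, two special paths $\alpha_1,\alpha_2$ through such a vertex $v$ automatically satisfy $\alpha_1\cap X_v=\alpha_2\cap X_v$.

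I expect this last point to be the only real work: verifying the rigidity of the deep-interior portion of a special path — that is, that passing two pieces into the interior erases all memory of the global configuration — is essentially the main technical lemma of \cite{3mancones}, and the precise form of the normalization of model paths there would have to be recalled carefully. Properties $1)$ and $2)$, by contrast, are routine consequences of the construction, and the $2$-margin in $3)$ is exactly what is needed to ensure that the neighbours of $v$ on the tree geodesic are themselves internal pieces, which is what triggers the cancellation.
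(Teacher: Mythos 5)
The paper gives no proof of this lemma: it is stated as a recollection of properties of the construction in \cite{3mancones}, with the burden of proof entirely deferred to that reference. Your sketch is therefore extra detail rather than a competing argument, and you are right that (1) and (2) are routine features of the concatenation-along-the-tree-geodesic construction and that (3) is where all the content lives; you also correctly identify that verifying (3) \emph{is} the main technical lemma of \cite{3mancones} rather than something that can be reconstructed from scratch here.

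One caveat worth flagging in your account of (3). You argue that the flip normalization ``erases memory'' so that $\alpha\cap X_v$ depends on $X_v$ alone. That cannot be literally true if $\alpha_1$ and $\alpha_2$ enter and leave $X_v$ through different pairs of walls: in a flip piece $\widetilde{\Sigma}_v\times\R$, the $\widetilde{\Sigma}_v$-shadow of a model path joining the wall $\ell_1\times\R$ to $\ell_2\times\R$ is pinned near a geodesic of $\widetilde{\Sigma}_v$ running between $\ell_1$ and $\ell_2$, and this is a genuinely different subset of the piece for a different pair of walls. The hypotheses as written only ask that $v$ lie on both tree geodesics with $d(v,w_i),d(v,w'_i)\ge 2$; they do not say the two geodesics agree at $v$, so nothing forbids the two paths from crossing $X_v$ in different directions. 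The statement implicitly carries the stronger hypothesis that the two tree geodesics share the edges adjacent to $v$ (equivalently, enter and exit $X_v$ through the same walls); this is exactly how the lemma is applied in the proof of Proposition \ref{graphmancontr}, where $w$ is taken ``well within'' the common subsegment $[\pi'(v(x)),\pi'(v(y))]$ of the two geodesics. Your ``rigidity two pieces in'' heuristic is the right mechanism, but it yields coincidence of $\alpha_1\cap X_v$ and $\alpha_2\cap X_v$ only once the incoming and outgoing edges at $v$ match for both paths; as phrased, your argument quietly assumes this without saying so.
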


Let $\calPS(M)=\{g\phi^{-1}(\g):g\in\pi_1(M),\g\in\calPS(N)\}$.
\begin{prop}\label{graphmancontr}
 $(\tilM,\calPS(M))$ is a path system for $\pi_1(M)$. An element of $\pi_1(M)$ is contracting if and only if it acts hyperbolically on the Bass-Serre tree of $M$.
\end{prop}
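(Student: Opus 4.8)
The plan is to prove the two assertions in turn, the first being essentially bookkeeping and the second resting almost entirely on Lemma \ref{nice}$-(3)$. That $(\tilM,\calPS(M))$ is a path system for $\pi_1(M)$ I would dispose of in a few lines: the deck action on $\tilM$ is proper (indeed cocompact), $\calPS(M)=\{g\phi^{-1}(\g)\}$ is $\pi_1(M)$-invariant by construction, its elements are uniform quasi-geodesics because $\phi$ is bi-Lipschitz and the paths of $\calPS(N)$ are bi-Lipschitz with controlled constant (Lemma \ref{nice}$-(1)$), it is closed under subpaths because a subpath of $g\phi^{-1}(\g)$ is $g\phi^{-1}(\g')$ with $\g'$ a subpath of $\g$ (Lemma \ref{nice}$-(2)$), and any two points of $\tilM$ are joined by an element of $\calPS(M)$ since $\phi$ is a bijection and $\calPS(N)$ joins all pairs in $\tilN$. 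As for the equivalence, write $T$ for the Bass--Serre tree; since $\phi$ preserves the geometric decomposition, $T$ is also the Bass--Serre tree of $N$, the blocks $X_v\subseteq\tilM$ ($v\in V(T)$) are defined, and both the block decomposition and Lemma \ref{nice} transport through $\phi$ to $\calPS(M)$. Recall that an infinite-order $h$ acts hyperbolically on $T$ iff it is not conjugate into the fundamental group of a Seifert component (equivalently, not into an edge group). For the ``only if'' direction I would argue by contraposition: a finite-order element is not contracting since its orbit is not a bi-infinite quasi-geodesic, while if $h$ is conjugate into some $\pi_1(M_v)$, which up to finite index has the form $F\times\Z$ with $\Z$ the fibre, then the centralizer of $h$ contains the fibre and has $\langle h\rangle$ of infinite index, so, exactly as in the remark preceding the mapping class group case, $h$ is not Morse and hence not contracting by Lemma \ref{morse}$-(2)$.

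The substance is the ``if'' direction. Let $L\subseteq T$ be the axis of $h$, with translation length $\ell$, and set $v_j:=h^jv_0\in L$. First I would show every $\langle h\rangle$-orbit in $\tilM$ is a quasi-geodesic: the map $\rho\colon\tilM\to T$ sending a point to (the vertex of) a block containing it is coarsely Lipschitz — this uses the fine geometry of flip graph manifolds, namely that two walls bounding a common block are uniformly separated (cf.\ \cite{KL,3mancones}), so a bounded-length path meets boundedly many blocks — whence $d_T(v_0,h^nv_0)=\ell|n|$ forces $d_{\tilM}(x_0,h^nx_0)\geq c|n|$ for some $c>0$, and the trivial upper bound finishes it. Then I would construct the axis as a limit: fix for each $n$ a $\calPS(M)$-special path $\g_n$ from a point of $X_{v_{-n}}$ to a point of $X_{v_n}$; if $w\in L$ has $d(w,v_{\pm n})\geq 2$, then Lemma \ref{nice}$-(3)$ applied to $\g_n,\g_m$ ($m\geq n$) gives $\g_n\cap X_w=\g_m\cap X_w$, so letting $n\to\infty$ produces a bi-infinite path $\g$, every finite subpath of which is a $\calPS(M)$-special path, with $\g\cap X_v$ well-defined for all $v\in L$. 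Comparing $\g_n$ with $h\g_n$ via the same rigidity shows $\g$ is $h$-invariant, and by periodicity $\g$ crosses each block in bounded diameter, so $h$ acts coboundedly on $A:=\g$ and $A$ is a uniform quasi-geodesic; take $x_0\in\g$.

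It remains to check that $A=\g$ is $\calPS(M)$-contracting. I would set $\pi(y)=\g\cap X_{p_L(w_y)}$, where $w_y$ is a block containing $y$ and $p_L\colon T\to L$ is the closest-point projection; by the bounded-crossing remark this is well-defined up to bounded error, and condition $(1)$ of Definition \ref{maindef} is immediate for $y\in\g$. For condition $(2)$: if $d(\pi(y_1),\pi(y_2))$ is large then, $T$ being a tree, the geodesic $[w_{y_1},w_{y_2}]$ contains long subsegments of $L$ on either side of $p_L(w_{y_1})$ and of $p_L(w_{y_2})$; given a $\calPS(M)$-special path $\delta$ from $y_1$ to $y_2$ (which joins $X_{w_{y_1}}$ to $X_{w_{y_2}}$), choose $v\in[w_{y_1},w_{y_2}]\cap L$ at tree-distance $2$ from $p_L(w_{y_1})$ and apply Lemma \ref{nice}$-(3)$ to $\delta$ and to a long finite subpath of $\g$ for which $v$ is deep, obtaining $\delta\cap X_v=\g\cap X_v$; since $\g$ is bi-Lipschitz and $v$ is only two tree-steps from $p_L(w_{y_1})$, this lies within bounded $\tilM$-distance of $\pi(y_1)$, and symmetrically $d(\delta,\pi(y_2))$ is bounded. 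Taking $C$ above all these bounds makes $\g$ a $\calPS(M)$-contracting set, so $h$ is contracting.

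The step I expect to be the main obstacle is this last one: to legitimately invoke Lemma \ref{nice}$-(3)$ one must verify that the chosen vertex $v$ is genuinely at tree-distance $\geq 2$ from the endpoint blocks of \emph{both} $\delta$ and the truncation of $\g$, and the degenerate cases — $y_1,y_2$ close to $A$, or short tree-segments — have to be absorbed into the constant $C$; the secondary delicate point is the coarse-Lipschitz claim for $\rho$ in the first part, which is the only place where the fine geometry of flip graph manifolds enters beyond the formal properties recorded in Lemma \ref{nice}.
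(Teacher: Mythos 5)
Your proof is correct in substance and uses the same key ingredient as the paper (the rigidity of special paths across deep blocks, Lemma~\ref{nice}$-(3)$), but the overall route differs in one genuine way. The paper takes the orbit $\{g^ix_0\}$ itself as the axis, defines the projection $\pi$ via the Bass--Serre tree exactly as you do, and then shows the contraction property by comparing an arbitrary special path $\delta$ with \emph{some} special path $\beta$ between two orbit points via Lemma~\ref{nice}$-(3)$; the crucial step that $\beta$ stays uniformly close to the orbit is quoted from the Morse property established in \cite{KL}. This same citation also implicitly handles condition $(1)$ in the definition of contracting element (the orbit being a quasi-geodesic), which the paper does not verify explicitly. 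You instead build a genuine bi-infinite axis $\g$ by a stabilization/limit argument using Lemma~\ref{nice}$-(3)$ (including a check that the limit is $h$-invariant, where one must compare $h\g_n$ with the shifted $\g_{n+1}$ rather than with $\g_n$ itself), which lets you compare $\delta$ with $\g$ directly rather than going through \cite{KL}, and you supply the quasi-geodesic property of the orbit independently via a coarsely Lipschitz block-assignment map $\tilM\to T$ — a fact that indeed needs the uniform wall-separation in a flip graph manifold, exactly as you flag. Your version is therefore more self-contained (it does not invoke the Morse theorem from \cite{KL}) at the cost of the limit construction and the coarse-Lipschitz lemma; the paper's is shorter because it offloads that work to \cite{KL}. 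The ``only if'' direction and the path-system axioms are handled essentially identically.

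Two small points you should tighten if you write this out in full: first, the unbounded-diameter claim for $\g$ crossing blocks needs the observation that $\g$ visits only blocks $X_v$ with $v\in L$ and does so $\langle h\rangle$-equivariantly, which gives boundedness by finiteness of orbits on $L$; second, the sentence about ``long subsegments of $L$ on either side of $p_L(w_{y_1})$ and of $p_L(w_{y_2})$'' is imprecise — the relevant long segment of $L$ is the one \emph{between} the two projection points, and $v$ is chosen in it at distance two from $p_L(w_{y_1})$ — but the argument that follows uses this correct picture, so this is a wording issue rather than a gap.
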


\begin{proof}
 The first part follows directly from Lemma \ref{nice}$-(1)-(2)$, so we can focus on the second part.
\par
Let us prove the ``if'' part. Let $g\in\pi_1(M)$ be an element acting hyperbolically on the Bass-Serre tree. Let $x_0\in X_{v}$ for some vertex $v$ of the Bass-Serre tree such that $\{g^iv\}$ is contained in a bi-infinite geodesic $\g$. Define $\pi:\tilM\to \{g^ix_0\}$ in the following way. For each $x\in\tilM$ define a vertex  $v(x)$ in the Bass-Serre tree such that $x\in X_{v(x)}$ and let $\pi'$ be the projection in the Bass-Serre tree on $\g$. Choose $i(x)$ so that $d(g^{i(x)}v,\pi'(v(x)))\leq d(v,gv)/2$. Finally, define $\pi(x)$ to be $g^{i(x)}x_0$.

Now, suppose that we have $x,y\in\tilM$ such that $d(\pi(x),\pi(y))$ is large enough. Then we can conclude that, say, $d(\pi'(v(x)),\pi'(v(y)))\geq 100$. We will now use Lemma \ref{nice} and the fact that $\phi$ preserves the geometric components to find a bound on the distance between any special path $\delta$ connecting $x,y$ and $\pi(x),\pi(y)$. By hypothesis, $\phi$ induces a simplicial isomorphism from the Bass-Serre tree of $M$ to that of $N$, which we will still denote by $\phi$. Suppose that $\delta=h\phi^{-1}(\alpha)$. Then it is clear from Lemma \ref{nice} that $\delta$ shares a subpath with any special path $h\phi^{-1}(\beta)$ which connects some point in $X_{g^{i(x)}v}$ to some point in $X_{g^{i(y)}v}$. More precisely, those paths coincide in $X_w$ whenever $w$ is ``well within'' $[\pi'(v(x)),\pi'(v(y))]$. In particular, we can choose $\beta$ so that the endpoints of $h^{-1}\phi(\beta)$ are $g^{i(x)}x_0, g^{i(y)}x_0$, so that any point in $h^{-1}\phi(\beta)$ is close to $\g$ as $\g$ is Morse \cite{KL}. It is now easy to see that points if $x',y'\in\delta\cap h^{-1}\phi(\beta)$ are so that $d(v(x'),g^{i(x)}v), d(v(y'),g^{i(y)}v)\leq 10$, then $x',y'$ are within uniformly bounded distance from $\pi(x),\pi(y)$.
\par
The ``only if'' part is easy. In fact, if $g\in\pi_1(M)$ is conjugate to some element in a vertex group, then it is clearly not even Morse in view of the fact that such groups are virtually products of a free group and $\Z$.
\end{proof}

\subsection{Groups acting acylindrically on hyperbolic spaces and \texorpdfstring{$Out(F_n)$}{}}
Recall that a group $G$ is said to act \emph{acylindrically} on the metric space $X$ if for all $d$ there exist $R_d,N_d$ so that whenever $x,y\in X$ satisfy $d(x,y)\geq R_d$ we have
$$|\{g\in G:d(x,gx)\leq d, d(y,gy)\leq d\}|\leq N_d.$$
Graph manifold groups act acylindrically on their Bass-Serre tree. However, we are not able to prove the equivalent of Proposition \ref{graphmancontr} for groups acting acylindrically on trees or hyperbolic spaces, and indeed it might not be true. Nonetheless, we have the following.
\begin{prop}\label{acyl}
 Let $G$ be a group acting acylindrically (by isometries) on the hyperbolic space $X$ and let $\calPS$ be the collection of all geodesics of $X$. Then $g\in G$ is weakly contracting for the weak path system $(X,\calPS)$ if and only if it acts hyperbolically on $X$.
\end{prop}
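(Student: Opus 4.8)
The plan is to check the two clauses in the definition of weakly $\calPS$-contracting — the WPD property, and the existence of a contracting, $g$-invariant set on which $g$ acts coboundedly — directly against loxodromicity of $g$, in both directions.

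\emph{The ``only if'' direction.} If $g$ is weakly $\calPS$-contracting then in particular $g$ is WPD, so by definition some orbit $\langle g\rangle x_0$ is a quasi-geodesic in $X$. An infinite order isometry of a geodesic hyperbolic space is elliptic, parabolic, or loxodromic, and only in the loxodromic case does an orbit form a bi-infinite quasi-geodesic: an elliptic element has bounded orbits, while for a parabolic element an orbit makes sublinear progress towards the unique fixed point at infinity. Hence $g$ acts hyperbolically on $X$.

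\emph{The ``if'' direction.} Assume $g$ acts loxodromically. First I verify that $g$ is WPD. The orbits of $\langle g\rangle$ are quasi-geodesics with uniform constants, this being standard for loxodromic isometries of hyperbolic spaces, and quasi-geodesics in hyperbolic spaces are Morse, so the first requirement in the definition of WPD holds. For the finiteness requirement, fix $x\in X$ and $R\ge 0$, apply acylindricity with parameter $d=R$ to obtain constants $R_R,N_R$, and use $d(x,g^Nx)\to\infty$ as $N\to\infty$ to pick $N$ with $d(x,g^Nx)\ge R_R$. If $h\in G$ satisfies $d(x,hx)\le R$ and $d(hg^Nx,g^Nx)\le R$, then with $y=g^Nx$ we have $d(x,y)\ge R_R$, $d(x,hx)\le R$ and $d(y,hy)\le R$, so $h$ lies in the set appearing in the definition of acylindricity, which has at most $N_R$ elements. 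Thus the set in the definition of WPD is finite, and $g$ is WPD.

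It remains to exhibit the contracting set. Given any orbit $A=\langle g\rangle x_0$, it is $g$-invariant, $g$ acts coboundedly on $A$, and $A$ is a bi-infinite quasi-geodesic in $X$, hence quasi-convex. By Lemma \ref{qconvcontr}, the (almost-)closest-point projection $\pi_A\colon X\to A$ has the property that whenever $d(\pi_A(x),\pi_A(y))\ge C$, for $C$ depending only on the quasi-convexity constants of $A$, every geodesic from $x$ to $y$ passes within $C$ of $\pi_A(x)$ and of $\pi_A(y)$; this is precisely property $(2)$ of Definition \ref{maindef} with $\calPS$ the collection of all geodesics, and property $(1)$ is immediate since $\pi_A$ moves points of $A$ by a bounded amount. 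So $A$ is $\calPS$-contracting, and therefore $g$ is weakly $\calPS$-contracting. The only real work here is the bookkeeping: recognising that the witnessing contracting set may be taken to be the orbit itself (via quasi-convexity of loxodromic orbits together with Lemma \ref{qconvcontr}), and observing that the WPD finiteness bound follows from a single application of acylindricity after passing to a sufficiently large power $g^N$; I do not expect any genuinely hard step.
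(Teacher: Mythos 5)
Your proof is correct and follows the same route the paper does: the paper's proof is a one-liner citing Lemma \ref{qconvcontr} and ``unwinding the definitions,'' and your argument is exactly that unwinding carried out in full (Morse quasi-geodesic orbit $\Rightarrow$ loxodromic for the forward direction; acylindricity applied to $x$ and $y=g^Nx$ for WPD, plus quasi-convexity of the orbit and Lemma \ref{qconvcontr} for the contracting axis, in the reverse direction).
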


\begin{proof}
Given the following easy fact (Lemma \ref{qconvcontr}), the proof just requires unwinding the definitions: if $X$ is hyperbolic then all geodesics in $X$ are contracting.
\end{proof}

Bestvina and Feighn proved that the complex of free factors for $Out(F_n)$ is hyperbolic and that an element acts hyperbolically if and only if it is fully irreducible, in which case it also satisfies the WPD property \cite[Theorem 8.3]{BF2}. In particular we have the following.

\begin{prop}\label{outfn}
 Let $X$ be the complex of free factors of $Out(F_n)$ for some $n\geq 3$. Then $g\in Out(F_n)$ is weakly contracting for the weak path system $(X,\calPS)$ if and only it is fully irreducible.
\end{prop}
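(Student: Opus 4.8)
The plan is to read the statement off from the theorem of Bestvina--Feighn recalled just above the proposition, exactly as Proposition~\ref{acyl} was read off from acylindricity: the only real work is to match the hypotheses of Definition~\ref{def:wps} with what Bestvina--Feighn provide, together with the fact --- already used in Propositions~\ref{rhcontr} and \ref{acyl} --- that in a hyperbolic space, with $\calPS$ the collection of all geodesics, every quasi-geodesic is $\calPS$-contracting (this goes through Lemma~\ref{qconvcontr}, since a quasi-geodesic in a hyperbolic space lies at bounded Hausdorff distance from a geodesic and is therefore quasi-convex).

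For the ``if'' direction I would argue as follows. Suppose $g\in Out(F_n)$ is fully irreducible. By \cite[Theorem 8.3]{BF2}, $X$ is hyperbolic, $g$ acts hyperbolically on $X$, and $g$ is WPD. Hyperbolicity of the action means that every orbit $\langle g\rangle x_0$ is a quasi-geodesic (with uniform constants, since $g$ is fixed); being a quasi-geodesic in a hyperbolic space it is quasi-convex and hence $\calPS$-contracting by Lemma~\ref{qconvcontr}. I would then take $A=\langle g\rangle x_0$: it is $g$-invariant, $g$ acts coboundedly (even cocompactly) on it, and --- again because it is a quasi-geodesic in a hyperbolic space --- it is Morse, so the clause of Definition~\ref{def:wps} requiring the orbits of $\langle g\rangle$ to be Morse quasi-geodesics is satisfied. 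Thus $g$ is weakly $\calPS$-contracting.

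For the converse I would simply unwind Definition~\ref{def:wps}: if $g$ is weakly $\calPS$-contracting then $g$ is WPD, so in particular its orbits are Morse quasi-geodesics and hence unbounded, i.e. $g$ acts hyperbolically on $X$. By \cite[Theorem 8.3]{BF2} an element of $Out(F_n)$ acting hyperbolically on the complex of free factors is fully irreducible, so $g$ is fully irreducible. (Throughout, $g$ has infinite order as required: in the ``only if'' direction this is part of the hypothesis of Definition~\ref{def:wps}, and in the ``if'' direction fully irreducible elements are automatically of infinite order, since a finite power of a finite-order element fixes the conjugacy class of every proper free factor.)

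I do not expect a serious obstacle; the proposition is essentially a translation. The one point that deserves a line of care is the verification, in the ``if'' direction, that the auxiliary axis $A$ really is $\calPS$-contracting and that the action of $g$ on it meets all the bookkeeping of Definition~\ref{def:wps} --- but this is exactly what Lemma~\ref{qconvcontr} and the remark that quasi-geodesics in hyperbolic spaces are Morse take care of, so the proof will be short.
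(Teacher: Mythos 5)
Your proof is correct and follows essentially the same route as the paper: the paper's own argument is the one-sentence remark preceding the proposition, which cites \cite[Theorem 8.3]{BF2} and (as in Proposition~\ref{acyl}) lets Lemma~\ref{qconvcontr} supply the contracting property for quasi-geodesics in a hyperbolic space. Your write-up merely spells out the unwinding of Definition~\ref{def:wps} in more detail, in particular noting that the Morse requirement in the paper's version of WPD is automatic because quasi-geodesics in a hyperbolic space are Morse, which is exactly the bookkeeping the paper leaves implicit.
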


\begin{rem}
 For our applications we could also use the complexes constructed in \cite{BF1}.
\end{rem}

The present state of knowledge about the geometry of $Out(F_n)$ is not advanced enough to attempt imitating what we have done, say, for mapping class groups. It is not even known whether there is an algebraic characterization of Morse elements along the lines of the other cases, and hence the following is perhaps the most basic question arising at this point.
\begin{quest}
 Are all Morse elements of $Out(F_n)$ for $n\geq 3$ fully irreducible?
\end{quest}
 The converse is true by \cite{A-K}.
 A standard way to show that an (infinite order) element is not Morse is to show that (the cyclic group generated by) it has infinite index in its commensurator.
 There might be infinite order non-fully irreducible elements of $Out(F_n)$ which have finite index in their commensurators. So, if the answer to the previous question is negative, one might ask the following.
\begin{quest}
 Is it true that all elements of $Out(F_n)$ that have finite index in their commensurators are Morse?
\end{quest}

\subsection{Groups acting properly on proper \texorpdfstring{$CAT(0)$}{} spaces}

Recall that an isometry of a $CAT(0)$ space is \emph{of rank one} if it is hyperbolic and some (equivalently, every) axis of $g$ does not bound a half-flat.

\begin{rem}
In the case when $G$ is a right-angled Artin group and $X$ its standard $CAT(0)$ cube complex, the elements of $G$ that act as rank $1$ isometries coincide with those not conjugated into a join subgroup, see \cite{BC-raagcones}.
\end{rem}

\begin{prop}
 Let $G$ be a group acting properly by isometries on the proper $CAT(0)$ space $X$ and let $\calPS$ be the collection of all geodesics in $X$. Then $g\in G$ is contracting for the path system $(X,\calPS)$ if and only if it acts as a rank one isometry.
\end{prop}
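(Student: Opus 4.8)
The plan is to identify $\calPS$-contraction (for $\calPS$ the collection of all geodesics of the proper $CAT(0)$ space $X$) with the classical notion of a strongly contracting axis, using Lemma~\ref{4.24} as the dictionary, and then to invoke the standard fact that for a semisimple isometry of a $CAT(0)$ space the axis is strongly contracting precisely when the isometry is rank one. For the ``if'' direction, suppose $g$ acts as a rank one isometry; then $g$ is semisimple with a geodesic axis $\alpha$ on which it acts cocompactly, so taking $x_0\in\alpha$ the orbit is a bi-infinite quasi-geodesic and $g$ acts coboundedly on $\alpha$ (for a general $x_0$ one replaces $\alpha$ by $\alpha\cup\langle g\rangle x_0$, which is still contracting with the same projection). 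Since $\alpha$ bounds no half-flat it is strongly contracting, i.e.\ there is $C_0$ with $diam(\pi_\alpha(B_{d(x,\alpha)}(x)))\le C_0$ for all $x$, where $\pi_\alpha$ is the $CAT(0)$ nearest-point projection onto the convex set $\alpha$; as $\pi_\alpha$ also satisfies $d(x,\pi_\alpha(x))=d(x,\alpha)$, Lemma~\ref{4.24} shows $\pi_\alpha$ is a $\calPS$-projection onto $\alpha$ with constant depending only on $C_0$, so $\alpha$ is a $\calPS$-contracting $g$-invariant axis through $x_0$ and $g$ is $\calPS$-contracting.

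For the ``only if'' direction, let $g$ be $\calPS$-contracting, with quasi-geodesic orbit $O=\langle g\rangle x_0$ and a $g$-invariant contracting set $A\supseteq O$ at finite Hausdorff distance from $O$, with projection $\pi$ of constant $C$. First one checks $g$ is semisimple: it is not elliptic (bounded orbits are not quasi-geodesics), and to exclude parabolics one shows $O$ lies at finite Hausdorff distance from a geodesic line. For this, any point $z$ strictly between $g^{-n}x_0$ and $g^nx_0$ on the geodesic $[g^{-n}x_0,g^nx_0]$ splits it into two halves; applying the second contraction axiom to the pairs $(g^{-n}x_0,z)$ and $(z,g^nx_0)$ one finds a point of each half lying $C$-close to $\pi(z)$, and since these two points are then within $2C$ of each other on the geodesic they both lie within $2C$ of $z$, whence $d(z,A)\le 3C$. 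Thus $[g^{-n}x_0,g^nx_0]\subseteq N_{C'}(A)$ uniformly in $n$; combined with $A$ being quasi-isometric to $\Z$ and $O$ a quasi-geodesic, this gives uniform fellow-travelling of $O$ with $[g^{-n}x_0,g^nx_0]$, so by properness of $X$ an Arzel\`a--Ascoli argument produces a geodesic line $\alpha$ at finite Hausdorff distance $D$ from $O$ and hence from $A$. An isometry with an orbit at finite distance from a geodesic line is semisimple: it fixes the two distinct endpoints of $\alpha$, so it preserves the convex union $Z\cong\R\times Y$ of the lines joining them, acting as a translation in the $\R$-factor and an isometry $\psi$ of $Y$; the $\langle\psi\rangle$-orbit stays bounded, so $\psi$ is elliptic and $g$ is axial. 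So $g$ is axial with a genuine axis $\alpha$, at finite Hausdorff distance $D$ from $A$.

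It remains to show $\alpha$ bounds no half-flat, which forces $g$ to be rank one. Assume for contradiction $H\cong\{(s,r):r\ge 0\}$ is an isometrically embedded Euclidean half-plane with $\partial H=\alpha=\{(s,0)\}$. Fix $t$ large and put $x_t=(-t,t)$, $y_t=(t,t)$, $w_t=(-t,0)$, $w_t'=(t,0)$. Since $H$ is isometrically embedded and $X$ has unique geodesics, $[x_t,y_t]$ is the horizontal segment at height $t$, so every point of it is at distance $t$ from $\alpha$ and hence at distance $\ge t-D$ from $A$; similarly $[x_t,w_t]$ is vertical with distance to $\alpha$ growing linearly along it. Picking $\bar w_t\in A$ with $d(\bar w_t,w_t)\le D$ and applying the second contraction axiom to the pair $(x_t,\bar w_t)$, together with $A\subseteq N_D(\alpha)$ and the description of $[x_t,w_t]$, one pins $\pi(x_t)$ within a constant $C'=C'(C,D)$ of $w_t$, and likewise $d(\pi(y_t),w_t')\le C'$. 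Then $d(\pi(x_t),\pi(y_t))\ge 2t-2C'\ge C$ for $t$ large, so the second axiom forces $[x_t,y_t]$ to come $C$-close to $\pi(x_t)\in A$, contradicting $d([x_t,y_t],A)\ge t-D>C$. Hence $\alpha$ bounds no half-flat and $g$ is rank one.

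The main obstacle I anticipate is the semisimplicity step: ruling out parabolics with positive translation length genuinely requires both properness of $X$ and the full strength of contraction (quasi-geodesity of the orbit alone does not suffice, as the ``diagonal'' element of $\matH^2\times\R$ shows), so it is worth isolating the statement ``a bi-infinite $\calPS$-contracting quasi-geodesic in a proper $CAT(0)$ space lies at finite Hausdorff distance from a geodesic line'' as a separate lemma. Once this and the standard $CAT(0)$ dictionary (rank one $\Leftrightarrow$ strongly contracting axis) are in hand, the half-flat argument is short, the only remaining point needing care being the quantitative estimate that $\pi(x_t)$ is within $C'=C'(C,D)$ of $w_t$.
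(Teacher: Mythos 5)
Your ``if'' direction is essentially the paper's: both invoke the Bestvina--Fujiwara fact that the closest-point projection onto a rank one axis has the ball-contraction property and then use Lemma~\ref{4.24} to upgrade this to a $\calPS$-projection. Your ``only if'' direction, however, takes a genuinely different and more self-contained route. The paper disposes of it in one line by combining Lemma~\ref{morse} (contracting $\Rightarrow$ Morse) with the external fact that non-rank-one isometries of $CAT(0)$ spaces are never Morse. You instead argue directly from the contraction axiom: first you show that the geodesics $[g^{-n}x_0,g^nx_0]$ stay in a uniform neighbourhood of $A$ (the two-sided application of the projection axiom to a point $z$ on the geodesic is the key estimate, and it works after a short case analysis depending on whether $d(\pi(z),\pi(g^{\pm n}x_0))\geq C$), extract a limiting geodesic line via Arzel\`a--Ascoli using properness of $X$, and apply the flat strip theorem to rule out parabolics; then you derive a direct contradiction from a half-flat bounded by the axis, using the quantitative pinning $d(\pi(x_t),w_t)\leq 2C+3D$. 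This buys independence from the cited ``non-rank-one $\Rightarrow$ not Morse'' result at the cost of length, and it isolates the genuinely non-trivial point that contraction rules out parabolics with positive translation length (which quasi-geodesicity of the orbit alone does not do) --- a subtlety that the paper's one-liner leaves implicit. Both arguments are correct.
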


\begin{proof}
 It is clear that $(X,\calPS)$ is a path system on $G$, and the ``only if'' part follows from the fact that if $g$ does not act as a rank one isometry then it is not Morse. Suppose that $g$ acts as a rank one isometry.
 By \cite[Theorem 5.4]{BF} the closest point projection on an axis $l$ of $g$ has the property that there exists $B$ so that each ball disjoint from $l$ projects onto a set of diameter at most $B$. Property $2)$ in Definition \ref{maindef} then follows from Lemma \ref{4.24}.
\end{proof}

\begin{rem}
 It is natural to ask when a $CAT(0)$ space admits a rank one isometry. This question is addressed by the Rank Rigidity Conjecture, formulated by Ballmann and Buyalo:
\begin{conj}\cite{BB-rrcconj}
 Let $X$ be a locally compact geodesically complete $CAT(0)$ space and $\Gamma$ be an infinite discrete group acting properly and cocompactly on $X$. If $X$ is irreducible, then $X$ is a higher rank symmetric space or a Euclidean building of dimension at least 2, or $\Gamma$ contains a rank one isometry.
\end{conj}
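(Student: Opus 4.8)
The plan is to follow the template of the rank rigidity theorems for Hadamard manifolds (Ballmann; Burns--Spatzier) and for $CAT(0)$ cube complexes (Caprace--Sageev), combined with Leeb's metric recognition of symmetric spaces and Euclidean buildings. \textbf{Step 1: reduce to the statement that every geodesic bounds a flat half-plane.} Suppose $\Gamma$ contains no rank one isometry. Since $\Gamma$ acts properly and cocompactly on the locally compact, geodesically complete space $X$, the induced action on the space $G(X)$ of parametrized complete geodesics is non-wandering, and a standard recurrence argument shows that the axes of hyperbolic elements of $\Gamma$ are dense in $G(X)$. Being rank one --- that is, not bounding a flat half-plane --- is an open, $\Gamma$-invariant condition on $G(X)$, so the hypothesis forces \emph{every} complete geodesic of $X$ to bound a flat half-plane; in particular $X$ contains flats of dimension $\geq 2$ and $\dim \partial_T X \geq 1$.

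\textbf{Step 2: upgrade this to a spherical building structure on $\partial_T X$.} The heart of the argument is to show that the Tits boundary $\partial_T X$, with its angular metric, is a thick, irreducible spherical building. Using Step 1 together with the flat-strip and flat-torus lemmas one would enlarge half-planes to maximal flats $F$ and show that every point of $\partial_T X$ lies on some boundary sphere $\partial_T F$; one then verifies that these spheres play the role of apartments --- any two glued along a subcomplex and carried onto one another by a retraction --- and that enough branching occurs (thickness), coming from geodesic completeness together with cocompactness. Irreducibility of the building matches irreducibility of $X$: a nontrivial spherical-join splitting of $\partial_T X$ would, by the splitting theorem, yield a metric product decomposition of $X$; and if $\partial_T X$ were a single apartment $S^{n-1}$, then a cocompact geodesically complete Hadamard space would be a flat $\R^n$, which is reducible for $n \geq 2$, so the building is genuinely thick.

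\textbf{Step 3 and the main obstacle.} Once $\partial_T X$ is identified as a thick, irreducible spherical building of dimension $\geq 1$, Leeb's characterization of symmetric spaces and Euclidean buildings applies directly: $X$ is a Riemannian symmetric space of noncompact type --- necessarily of rank $\geq 2$, since thick spherical buildings of positive dimension have rank $\geq 2$ --- or a Euclidean building of dimension $\geq 2$, which together with Step 1 is exactly the stated trichotomy. The decisive obstacle is Step 2, and it is the reason the statement is still a conjecture rather than a theorem: in the smooth case Burns--Spatzier relied on parallel vector fields, holonomy, and ergodicity of the geodesic flow, Ballmann on a delicate analysis of the flat structure, and Caprace--Sageev on a purely combinatorial argument via hyperplanes; for a general singular $CAT(0)$ space none of these tools is available, and extracting the ``gluing of apartments'' building axiom for $\partial_T X$ from the bare hypothesis that every geodesic bounds a flat half-plane appears to require a genuinely new idea. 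Under additional hypotheses --- isolated flats, $CAT(0)$ cube complex, or Hadamard manifold --- this step, and hence the conjecture, is known.
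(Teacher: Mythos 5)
This statement is not a theorem of the paper at all: it is the Rank Rigidity Conjecture of Ballmann--Buyalo, quoted verbatim inside a remark, and the paper offers no proof --- it only notes that the conjecture is known for Hadamard manifolds and for $CAT(0)$ cube complexes. So there is no proof of the paper to compare yours against, and, more importantly, what you have written is not a proof either. You say so yourself: your Step 2 (promoting ``every geodesic bounds a flat half-plane'' to a thick spherical building structure on $\partial_T X$, so that Leeb's characterization applies) is exactly the open heart of the problem, and conceding it means the argument establishes nothing beyond a plan of attack. A sketch whose decisive step is declared to be an unsolved problem cannot be spliced in as a proof of the statement.

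Beyond the admitted gap, be careful that even your Step 1 is not the ``standard recurrence argument'' you label it as in this generality. Ballmann's closing lemma --- a recurrent geodesic not bounding a half-plane is approximated by axes of rank one axial isometries --- is proved using the smooth geodesic flow on a Hadamard manifold; for a general locally compact, geodesically complete $CAT(0)$ space there is no genuine geodesic flow, the space of parametrized geodesics is badly behaved at branch points, and the density of axes of hyperbolic elements in $G(X)$ is itself a nontrivial assertion (the available substitutes, e.g. the duality arguments of Ballmann--Buyalo and later work under extra hypotheses such as isolated flats or cube complex structure, are precisely where the known partial cases come from). So both the reduction and the building step require genuinely new input; the honest summary of your text is a survey of why the conjecture is open, not a proof of it.
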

The conjecture is known to hold for Hadamard manifolds, see e.g. \cite{Ba-lect}, and $CAT(0)$ cube complexes \cite{CS-rrc}.
\end{rem}

\section{Contracting vs hyperbolically embedded}
\label{hypemb:sec}
Throughout this section we fix a weak path system $(X,\calPS)$ on the group $G$.

Suppose that $A$ is an axis of a weakly contracting element of $G$. Given a constant $b$, for $x,y,z\in A$ we say that $y$ is between $x$ and $z$ (with constant $b$) if any special path from $x$ to $z$ intersects $B_b(y)$. Similarly, for $h\in G$ and $hx,hy,hz\in hA$ we say that $hy$ is between $hx$ and $hz$ if $y$ is between $x$ and $z$.


\begin{lemma}[Projections are coarsely monotone]
Let $A$ be an axis of a weakly contracting element. Then there exist $b,K$ so that the following hold for each $h\in G$. Suppose, for $i=0,1,2$, that $y_i=\pi_A(x_i)$, for some $x_i\in hA$. Then if $y_1$ is between $y_0$ and $y_2$ with constant $b$ and $d(y_0,y_1),d(y_1,y_2)\geq K$, then $x_1$ is between $x_0$ and $x_2$, again with constant $b$.
\end{lemma}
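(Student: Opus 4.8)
The plan is to show that $x_1$ lies "between'' $x_0$ and $x_2$ by running a special path $\delta$ from $x_0$ to $x_2$ and using the contraction property of $A$ twice: first to see that $\delta$ must pass $C$-close to both $y_0$ and $y_2$ (once $K$ is large enough that $d(y_0,y_2)\geq d(y_0,y_1)+d(y_1,y_2)-\text{(error)}\geq C$, using that $\langle g\rangle$-orbits — hence $A$ — are quasi-geodesics so betweenness on $A$ forces the relevant distances to be roughly additive), and second to control where $\delta$ can be when its projection to $A$ is near $y_1$. Concretely, let $p_0,p_2\in\delta$ be points with $d(p_i,y_i)\leq C$. By coarse monotonicity of $\pi_A\circ(\text{restriction of }\delta)$ — which follows from Lemma \ref{loccnst}$-(1)-(3)$ applied along $\delta$ — the projection $\pi_A(\delta(t))$ moves coarsely monotonically from near $y_0$ to near $y_2$ as $t$ runs along $\delta$, so there is a point $w\in\delta$ between $p_0$ and $p_2$ with $d(\pi_A(w),y_1)\leq C'$ for a suitable constant $C'$.

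Next I would transfer this to $hA$. The key point is that $w$ lies on $\delta$ close to the axis $A$ (since $d(\pi_A(w),y_1)$ is bounded and $d(w,\pi_A(w))$ is bounded because $\delta$ is a special path whose endpoints project far apart — apply Lemma \ref{loccnst}$-(1)$, or rather the second contraction property directly to the subpaths of $\delta$ ending at $w$), so $w$ is within bounded distance of $y_1\in A$. Now I use the weakly contracting element's WPD property together with the fact that $hA$ is also an axis (a translate of $A$): the two axes $A$ and $hA$ both pass close to $w$; since $x_1=\pi_A(\,\cdot\,)$... — more precisely, $y_1=\pi_A(x_1)$ with $x_1\in hA$, and $d(x_1,y_1)\leq C$ by Lemma \ref{loccnst}$-(4)$ together with the hypothesis $x_1\in hA$ being close to $A$. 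So $d(w,x_1)$ is bounded, and $w$ sits on $\delta$ between the points projecting near $y_0,y_2$; pulling back through the isometry $h^{-1}$, the corresponding special path from $x_0$ to $x_2$ passes close to $x_1$. Choosing $b$ to be this final bound and $K$ large enough to feed all the applications of Lemma \ref{loccnst} and Lemma \ref{dichot}, we are done.

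The main obstacle is the bookkeeping in the step "$x_1$ is within bounded distance of a point $w$ on $\delta$.'' One has to be careful that the hypothesis "$y_1$ between $y_0$ and $y_2$ with constant $b$'' is used in the \emph{right} direction: it is an input constraint on $A$, and what we must \emph{produce} is a betweenness statement in $hA$, i.e. control of special paths from $x_0$ to $x_2$. So the argument really runs: (i) far-apart projections $\Rightarrow$ $\delta$ fellow-travels $A$ near the segment $[y_0,y_2]$; (ii) coarse monotonicity $\Rightarrow$ $\delta$ has a point $w$ near $y_1$; (iii) $y_1$ near $x_1$ because $x_1\in hA$ and $hA$ passes near $y_1$ — here one invokes Lemma \ref{dichot} to say that either $\pi_A(x_1)$ is close to $\pi_A(hA)$ or vice versa, combined with the between-hypothesis to rule out the degenerate case; (iv) conclude $d(w,x_1)\leq b$. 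Step (iii) is where the WPD/acylindricity-free argument is most delicate, since we only know $hA$ is contracting, not that $A$ and $hA$ interact nicely a priori; the between-hypothesis with constant $b$ and the lower bounds $d(y_0,y_1),d(y_1,y_2)\geq K$ are exactly what make it go through.
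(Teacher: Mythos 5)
Your proposal has a genuine gap exactly at the step you flag as delicate, namely step (iii). You need to conclude that $d(x_1,y_1)$ is bounded, i.e.\ that $x_1$ itself lies close to $A$. But the hypotheses only give $\pi_A(x_1)=y_1$; they do not say $x_1$ is near $A$, and Lemma~\ref{loccnst}$-(4)$ only gives $d(x_1,y_1)\leq k\,d(x_1,A)+k$, which is useless without an a priori bound on $d(x_1,A)$. Your proposed patch via Lemma~\ref{dichot} is vacuous here: applying it to the contracting sets $A$ and $hA$ with the point $x_1$, the quantity $d(\pi_A(x_1),\pi_A(hA))$ is automatically $0$ because $x_1\in hA$, so the dichotomy yields nothing. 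What you have actually established is that some point $w'\in hA$, lying between $x_0$ and $x_2$ on $hA$, is close to $y_1$. Since $\pi_A$ need not be injective on $hA$, you cannot conclude $w'\approx x_1$; a priori $x_1$ could be a completely different point of $hA$, far from $A$, that also happens to project to $y_1$.

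The paper's proof sidesteps this by arguing by contradiction via the trichotomy on $hA$. Since $x_0,x_1,x_2$ are pairwise far apart (pull back the bounds on the $y_i$ through the coarse Lipschitz estimate of Lemma~\ref{loccnst}$-(2)$) and $hA$ is a Morse quasi-geodesic, one of the three must be between the other two; if it is not $x_1$, say it is $x_2$. One then runs a special path from $x_0$ to $x_2$ to produce a point $x\in hA$ between $x_0,x_2$ close to $y_1$ (this is your $w'$), and a special path from $x_2$ to $x_1$ — whose endpoints project to $y_2,y_1$ with $d(y_1,y_2)\geq K\geq C$, so it too passes $C$-close to $y_1$ — to produce a second point $y\in hA$, this time between $x_2$ and $x_1$, close to $y_1$. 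Now $x_2$ is trapped on the quasi-geodesic $hA$ between $x$ and $y$, which are uniformly close to each other, forcing $d(x_2,y_1)$ bounded and hence $d(y_2,y_1)$ bounded, contradicting $d(y_1,y_2)\geq K$. Notice that this argument never tries to locate $x_1$ relative to $A$; it only produces bounded distances for the point $x_2$, which is exactly what yields the contradiction. Your direct route would require establishing $x_1$ close to $A$ first, but that fact is essentially equivalent to the conclusion of the lemma and cannot be read off the hypotheses without the case analysis.
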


\begin{proof}
Recall that special paths with endpoints on $A$ stay uniformly close to $A$, see Lemma \ref{morse}$-(1)$.
In particular, as $A$ is quasi-isometric to $\R$, there exists $b$ and $K_0$ so that if $x,y,z$ are points at reciprocal distance at least $K_0$ then there exists one of them which is between the other two with constant $b$.

We can assume $K\geq K_0$ and apply this for $\{x,y,z\}=\{x_0,x_1,x_2\}$.

Suppose by contradiction that $x_2$ is between $x_0$ and $x_1$ (we can similarly handle the case when $x_0$ is assumed to be between $x_1,x_2$). Up to increasing $K$ again, we have points $x,y\in hA$ lying, respectively, between $x_0$ and $x_2$ and between $x_2$ and $x_1$ such that $d(x,y_1),d(y,y_1)$ is bounded in terms of $C$. If $K$ is large enough compared to a constant $\mu$ so that $A$ and hence $hA$ is $(\mu,\mu)$-quasi-isometric to $\R$ this gives a contradiction, see Figure \ref{coarsemon}.
%
%
%
%
%
\end{proof}

\begin{figure}[ht]
\hspace{-.5cm}
\begin{minipage}[bl]{.5\textwidth}
\includegraphics[width=\textwidth]{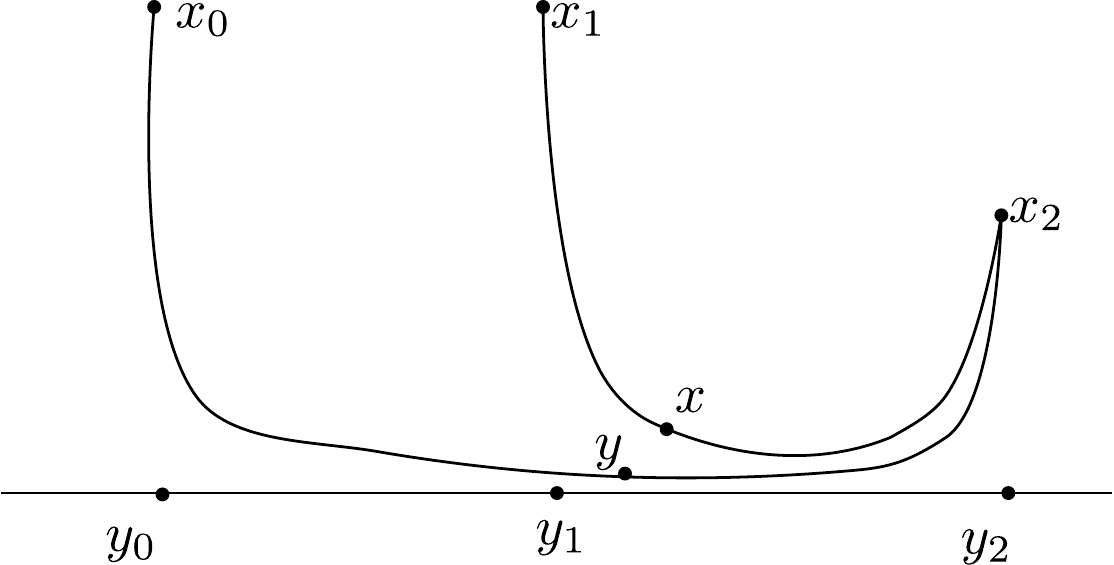}
\caption{}\label{coarsemon}
\end{minipage}
\begin{minipage}[br]{.5\textwidth}
\includegraphics[width=\textwidth]{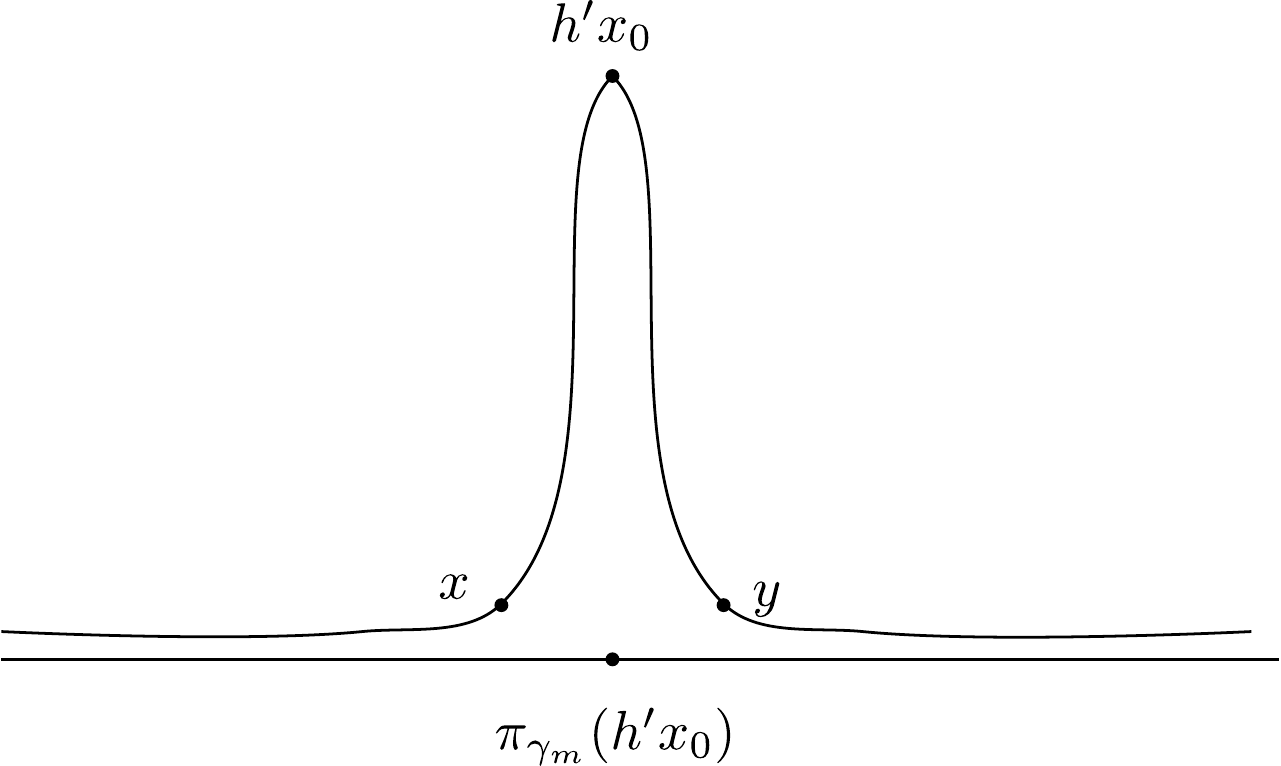}
\caption{}\label{notqgeod}
\end{minipage}
\end{figure}

Let $(X,\calPS)$ be a weak path system on the group $G$, and let $x_0\in X$.

\begin{lemma}\label{boundorcobound}
 Suppose that $g\in G$ is weakly contracting with axis $A$.
Then there exists $K$ such that for each $h\in G$ either $\pi_{A}(hA)$ has diameter bounded by $K$ or it is $K$-dense in $A$. Moreover, there exists $K'$ so that if $d(hx_0,A)\geq K'$ then the first case holds.
\end{lemma}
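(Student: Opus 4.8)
The statement is a dichotomy for the projection $\pi_A(hA)$ of a translate of the axis onto $A$: either it is coarsely bounded (diameter $\leq K$) or coarsely surjective ($K$-dense), and the bounded case is forced once $hx_0$ lies far from $A$. The plan is to combine the coarse-monotonicity lemma just proved with the local constancy and dichotomy lemmas from Section~\ref{defcontr} (Lemmas~\ref{loccnst} and~\ref{dichot}), together with the cobounded action of $g$ on $A$. The key mechanism: since $hA$ is itself an axis of the weakly contracting element $hgh^{-1}$, both $A$ and $hA$ are $\calPS$-contracting subsets, so Lemma~\ref{dichot} applies to the pair $(A,hA)$, and coarse monotonicity tells us that the image $\pi_A(hA)$, being a coarsely monotone image of a quasi-line, is itself ``coarsely convex'' along $A$ — i.e. a coarse subinterval.

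**Main steps.** First I would observe that $\pi_A(hA)$ is coarsely connected: for $u,v\in hA$ with $d(u,v)$ comparable to the quasi-isometry constant, $d(\pi_A(u),\pi_A(v))$ is bounded by Lemma~\ref{loccnst}$-(2)$, so walking along $hA$ moves $\pi_A$ by bounded jumps. Hence $\pi_A(hA)$ is, up to bounded Hausdorff error, a subinterval $J$ of the quasi-line $A$. If $J$ has diameter $\leq K$ we are in the first case. So assume $J$ is long; I claim it must be all of $A$. Suppose not: then $J$ has an ``endpoint'' $p\in A$ beyond which $\pi_A(hA)$ does not reach, while $hA$ extends infinitely in the direction mapping near $p$. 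Pick $x_n\in hA$ tending to infinity in that direction with $\pi_A(x_n)$ staying within bounded distance of $p$. Applying coarse monotonicity to triples $x_0,x_n,x_m$ on $hA$ (with $n\ll m$), the projections $\pi_A(x_n),\pi_A(x_m)$ would have to be coarsely ordered along $A$ consistently with the order on $hA$; but they all cluster near $p$, so their pairwise distances stay bounded even though the $x_n$ escape to infinity on $hA$ — and now Lemma~\ref{loccnst}$-(3)$ (applied with $A$ replaced by $hA$, using $\pi_{hA}$) gives the contradiction, since $\pi_{hA}$ of a bounded set near $p$ must be bounded, yet it contains all the $x_n$. This is essentially the configuration ruled out in the proof of Lemma~\ref{dichot}. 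Hence $J=A$ up to bounded error, i.e. $\pi_A(hA)$ is $K$-dense.

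**The ``moreover'' clause.** For the second assertion, suppose $d(hx_0,A)\geq K'$ with $K'$ large. I would argue by contradiction that $\pi_A(hA)$ cannot be $K$-dense. If it were, then in particular some point of $hA$ — indeed, by coboundedness of $g$ on $A$ and of $hgh^{-1}$ on $hA$, some translate $hg^i x_0$ — projects near $\pi_A(hx_0)$, but also $hA$ would have to come $C$-close to $A$ (by the contracting property of $A$: if $\pi_A(hA)$ spreads out, two points of $hA$ have far-apart projections, so the special path between them, which lies uniformly close to $hA$ since $hA$ is Morse by Lemma~\ref{morse}, passes $C$-close to those projection points on $A$). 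So $hA$ enters $N_{C}(A)$. But then applying the same reasoning with the roles swapped, combined with Lemma~\ref{dichot} for the pair $(A,hA)$, forces $\pi_{hA}(A)$ to be bounded; feeding this back through coarse monotonicity and the fact that $hx_0$ is the $h$-image of a point \emph{on} $A$ at bounded distance from $hA$, one bounds $d(hx_0,A)$ by a constant independent of $h$, contradicting $d(hx_0,A)\geq K'$ for $K'$ chosen larger than that constant. Concretely: $d(hx_0,A)\leq d(hx_0,\pi_{hA}(x_0))+ \text{(diam of the relevant projection)} + C$, and each term is universally bounded.

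**Main obstacle.** The delicate point is the dichotomy step — ruling out a ``half-dense'' image, where $\pi_A(hA)$ is an infinite ray in $A$ rather than all of $A$ or a bounded set. One must use that $hA$ is a genuine bi-infinite quasi-line (not a ray) together with coarse monotonicity to show that if the projection ever ``turns around'' it does so within bounded distance, so an infinite half-line image would require $hA$ to be projected injectively-at-large-scale onto a ray while its other end is swallowed into a bounded set — exactly the situation Lemma~\ref{loccnst}$-(3)$ and the argument in Lemma~\ref{dichot} forbid. Making the ``bounded turnaround'' precise from coarse monotonicity, with uniform constants over all $h$, is where the real work lies; everything else is bookkeeping with the constants $k, C, \mu$ already introduced.
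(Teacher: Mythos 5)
There is a genuine gap: your argument never invokes the WPD property (Lemma \ref{swpd}), and the dichotomy you are after is simply false without it at the level of pure projection geometry. Concretely, take $X$ a tree, $A$ a bi-infinite geodesic, and $hA$ a bi-infinite geodesic that shares a half-line with $A$ and then branches off. Then both $\pi_A(hA)$ and $\pi_{hA}(A)$ are infinite but not cobounded in their targets, the coarse monotonicity lemma is satisfied, and all the constants in Lemmas \ref{loccnst} and \ref{dichot} are uniform — yet neither horn of your dichotomy holds. The step where you claim a contradiction, ``$\pi_{hA}$ of a bounded set near $p$ must be bounded, yet it contains all the $x_n$,'' does not follow from anything you cite: $\pi_{hA}(p)$ is a single (coarse) point on $hA$, and there is no reason it should be near the sequence $x_n$ escaping along $hA$; neither Lemma \ref{dichot} nor Lemma \ref{loccnst}$-(3)$ gives the two-sided ``reversibility'' of projections you are implicitly assuming.

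What actually closes the gap is properness of the action in the direction of $g$. The paper's proof first uses coarse monotonicity to show (as you correctly do in spirit) that if $hA$ leaves a large neighbourhood of $A$ then $\pi_A(hA)$ cannot reach far past $\pi_A(h'x_0)$ for a far-away $h'x_0\in hA$; but to convert ``large'' into a definite uniform bound $K$ it argues by contradiction with Lemma \ref{swpd}: if $\operatorname{diam}\pi_A(hA)$ were large, one could produce many integers $N_i$ so that the translates $g^{N_i}h'A$ all pass through a fixed ball and fellow-travel $A$ for a long stretch, while lying in pairwise distinct left cosets of $\langle g\rangle$ — contradicting the finiteness furnished by the WPD lemma. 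Your write-up needs this counting-against-properness step; without it, the ``half-dense'' configuration cannot be excluded. The ``moreover'' clause should similarly go through the observation that if $d(hx_0,A)\geq K'$ then the first bullet of the coarse-monotonicity argument caps $\operatorname{diam}\pi_A(hA)$ outright, rather than the circular chain of swapped projections you sketch.
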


\begin{proof}
 If $A$ is contained in a neighbourhood of $hA$ of finite radius, then it is easily seen that the second case holds. Otherwise, we can find $h'x_0\in hA$ such that $h'x_0$ is as far as we wish from $A$. We claim that if $d(h'x_0,A)$ is large enough, then $\pi_{A}(h'x_0)$ cannot be between points $p,q$ in $\pi_{A}(h'A)$ that are arbitrarily far from it.

 In fact, by the previous lemma $h'x_0$ would be between points projecting to $p,q$, and hence one can find $x,y\in h'A$ with $h'x_0$ between $x,y$ such that $d(x,\pi_{A}(h'x_0))\leq C$ and $d(y,\pi_{A}(h'x_0))\leq C$, where $C$ is the projection constant. This is easily seen to contradict $h'A$ being a quasi-geodesic, see Figure \ref{notqgeod}.
\par
Now, if $diam(\pi_{A}(hH_0))$ is large enough we can find (see Figure \ref{farcosets2}) a large family $\{N_i\}$ of integers such that
\begin{itemize}
 \item $\pi_{A}(g^{N_i}h'A)$ has positive Hausdorff distance from $\pi_{A}(g^{N_j}h'A)$ whenever $i\neq j$. This gives in particular that $g^{N_i}h'$ and $g^{N_j}h'$ do not belong to the same left coset of $\langle g\rangle$ whenever $i\neq j$.
\item $g^{N_i}h'A$ contains a point in some ball of fixed radius around $g^{m_0}x_0$, as well as a long subpath contained in a neighbourhood of $A$ of radius depending on the projection constant and the quasi-geodesic constants of special paths only. This implies, by Lemma \ref{swpd}, that all left cosets $g^{N_i}h'\langle g\rangle$ contain an element from a certain fixed finite subset of $G$.
\end{itemize}

Therefore there is a bound on the cardinality of $\{N_i\}$, and hence a bound on $diam(\pi_{A}(hA))$.
\end{proof}

\begin{figure}[ht]
\hspace{-.5cm}
\begin{minipage}[bl]{.5\textwidth}
\includegraphics[width=\textwidth]{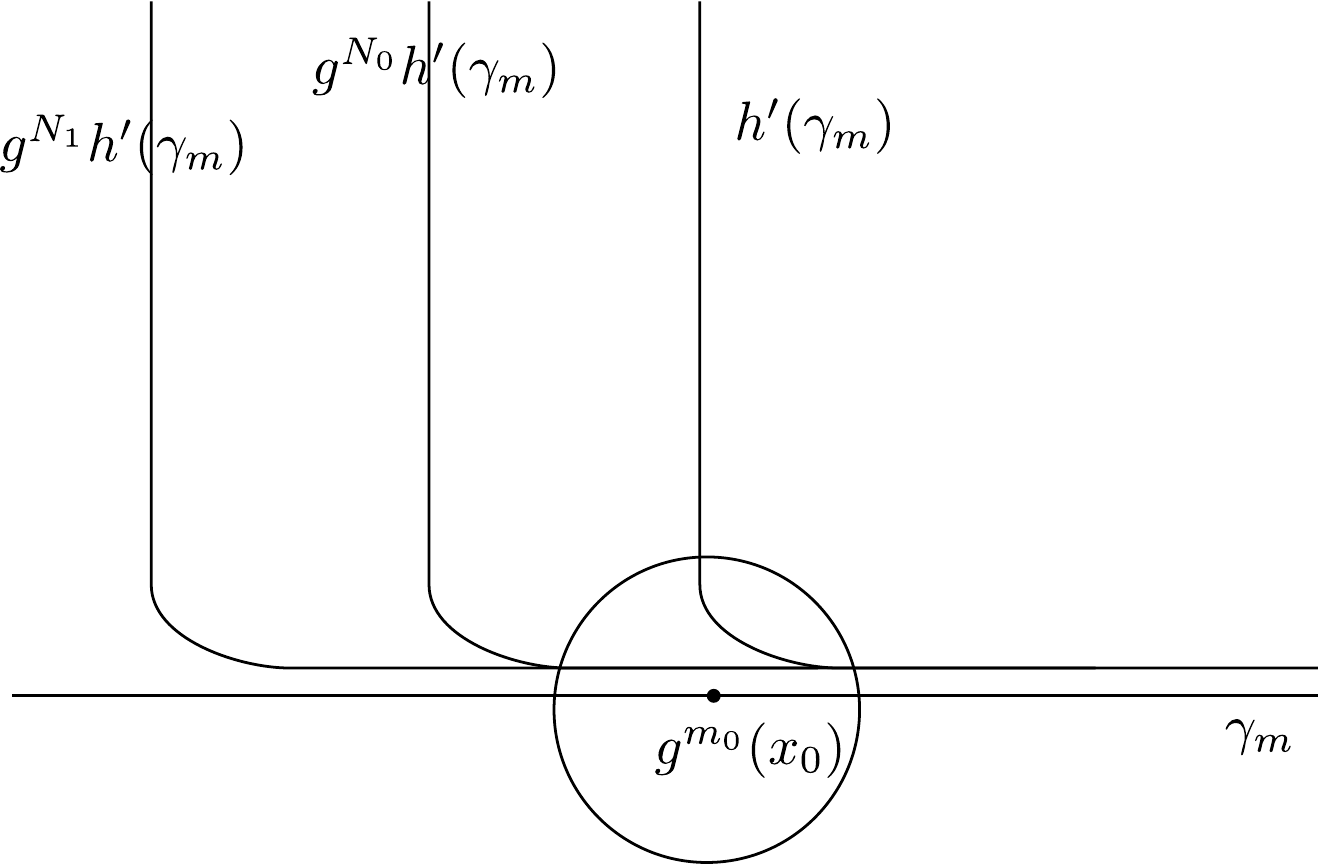}
\caption{}\label{farcosets2}
\end{minipage}
\begin{minipage}[br]{.5\textwidth}
\includegraphics[width=\textwidth]{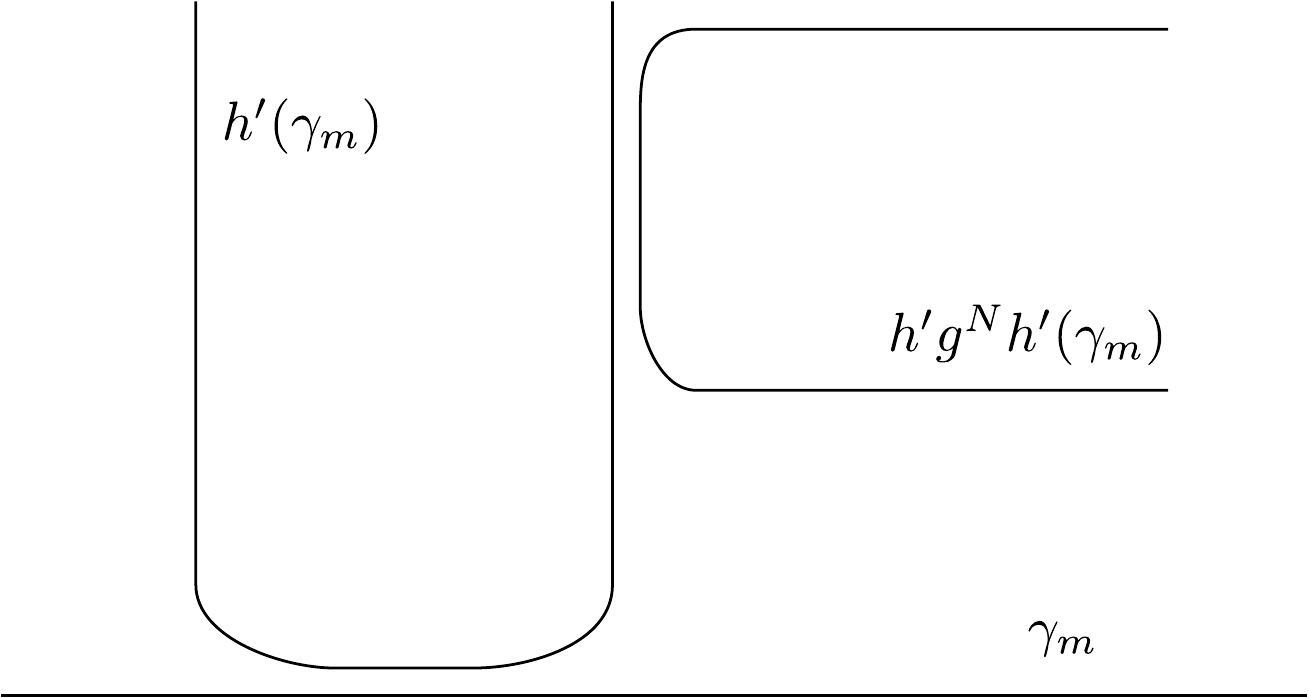}
\caption{}\label{farcosets1}
\end{minipage}
\end{figure}

One can similarly prove the following.

\begin{lemma}\label{farcosets}
 Suppose that $H<G$ is not virtually cyclic and it contains the weakly contracting element $g$ with axis $A$. Then for each $K$ there exists a left coset $h\langle g\rangle\subseteq H$ such that $d(hA, A)\geq K$.
\end{lemma}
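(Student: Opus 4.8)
My plan is to prove the contrapositive: assuming there is a single constant $K$ with $d(hA,A)<K$ for \emph{every} $h\in H$ (equivalently, for every left coset $h\langle g\rangle\subseteq H$, since $A$ is $g$-invariant), I will deduce that $H$ is virtually cyclic, contradicting the hypothesis. It suffices to treat arbitrarily large $K$. First I would run the dichotomy of Lemma~\ref{boundorcobound} on each translate $hA$: either $\pi_A(hA)$ is $K_0$-dense in $A$, or it has diameter $\le K_0$. In the dense case, the ``moreover'' clause of Lemma~\ref{boundorcobound} forces $d(hg^nx_0,A)<K'$ for all $n$ (otherwise the translate $hg^nA=hA$ would fall in the bounded case), so $h\langle g\rangle x_0\subseteq N_{K'}(A)$, and together with $K_0$-density this gives $d_{\mathrm{Haus}}(hA,A)<\infty$, i.e.\ $h\in E(g)$.

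The key remaining point — and I expect it to be the main obstacle — is to rule out translates $hA$ that come within $K$ of $A$ but have $\pi_A(hA)$ of bounded diameter: such an $hA$ would touch $A$ only in a bounded region and then leave every neighbourhood of $A$. Here I would argue as in the proof of Lemma~\ref{boundorcobound}: after adjusting the representative (left-multiplying by a power of $g$ so that the touching region is near $x_0$, then right-multiplying by a power of $g$ — which does not change the coset — so that $d(hx_0,x_0)$ is uniformly bounded), the $g$-translates $g^Nh\langle g\rangle$ become pairwise distinct cosets, since their translates of $A$ touch $A$ near the pairwise-far points $g^Nx_0$; feeding a sufficiently long family of these into Lemma~\ref{swpd} — whose hypothesis is met because along each such translate a controlled sub-segment must still run near $A$ — bounds their number, a contradiction. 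The upshot of both cases is that every $h\in H$ lies in $E(g)$, hence $H\subseteq E(g)$; since $E(g)$ is virtually cyclic (this is the WPD input, cf.\ Lemma~\ref{swpd} and \cite{BeFu-wpd}), so is $H$.

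Alternatively, once one has extracted from the above that $hA\subseteq N_{R_0}(A)$ for all $h\in H$ with a \emph{uniform} $R_0$, the endgame can be done by hand without quoting the structure of $E(g)$: then $L:=Hx_0$ lies in a uniform neighbourhood of $A$, hence of $\langle g\rangle x_0$, so $(L,d)$ is quasi-isometric to $\R$; $H$ acts on $L$ by isometries, so, passing to the index-$\le 2$ subgroup $H^+$ acting in an orientation-preserving way on $L$, one uses that an isometry of the quasi-line $L$ moving $x_0$ a bounded amount moves \emph{every} point of $L$ a uniformly bounded amount, and concludes via the WPD property of $g$ that $\{u\in H^+:d(ux_0,x_0)\le R_1\}$ is finite for the relevant uniform $R_1$; since every coset of $\langle g\rangle$ in $H^+$ meets this set — here one must write the representative as $ug^m$, not $g^mu$, so that the coset is preserved — we get $[H^+:\langle g\rangle]<\infty$ and $H$ virtually cyclic. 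The delicate bookkeeping throughout is keeping the $g$-powers on the correct side, and fixing all metric constants before invoking the WPD property to choose the exponent in Lemma~\ref{swpd}.
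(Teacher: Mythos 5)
Your plan goes by contrapositive, and the step you flag as the main obstacle --- ruling out $h\in H$ with $d(hA,A)<K$ but $\pi_A(hA)$ of bounded diameter --- is exactly where it breaks, because such elements genuinely exist and are fully compatible with the standing hypothesis. Concretely, take $G=H=F_2=\langle a,b\rangle$ acting on its Cayley tree, $g=a$, $A=\langle a\rangle$; then $h=b$ has $d(bA,A)=1$ and $\pi_A(bA)=\{1\}$, yet $b\notin E(g)=\langle a\rangle$ and $bA$ immediately leaves every neighbourhood of $A$. Your proposed invocation of Lemma~\ref{swpd} to eliminate such $h$ fails on both of its hypotheses: $d(x_0,g^Nhx_0)$ grows linearly in $N$, so there is no uniform $R$ with which to invoke the lemma on the family $\{g^Nh\}$; and the long-fellow-travel condition (the lower bound on $diam(N_R(\cdot)\cap\langle g\rangle x_0)$) is precisely the negation of the bounded-projection case you are trying to dispose of, so the claim that ``a controlled sub-segment must still run near $A$'' is not available. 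Having infinitely many distinct cosets $g^Nh\langle g\rangle$ is also not in tension with anything. As a result, the intermediate conclusion $hA\subseteq N_{R_0}(A)$ for all $h\in H$ with a uniform $R_0$, on which both of your endgames rest, does not follow from the contrapositive hypothesis $d(hA,A)<K$.

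The paper's proof is direct and sidesteps the problem. Since $H$ is not virtually cyclic, one can choose $h'\in H$ arbitrarily far from $\langle g\rangle$ in a Cayley graph of $G$; the dichotomy of Lemma~\ref{boundorcobound} then forces $diam(\pi_A(h'A))$ to be bounded (the $K$-dense branch would put $h'$ in $E(g)$, hence close to $\langle g\rangle$). One then takes $h=h'g^Nh'$ for $N$ large: since both $\pi_A(h'A)$ and $\pi_A(h'^{-1}A)$ are bounded and projections commute coarsely with the $G$-action, $\pi_{h'A}(A)$ stays near a fixed bounded region of $h'A$, while $\pi_{h'A}(hA)=\pi_{h'A}(h'g^Nh'A)$ drifts off linearly in $N$. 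The coarse Lipschitz property of projections (Lemma~\ref{loccnst}$-(2)$) then converts this large separation of projections onto $h'A$ into the desired bound $d(hA,A)\geq K$. The idea missing from your proposal is this explicit construction of the coset representative $h'g^Nh'$, which uses a single element with bounded projection as a ``mirror'' to push $A$ far from itself; one never needs to control all of $H$ at once, nor to show that bounded-distance translates are absorbed into $E(g)$.
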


\begin{proof}
 As $H$ is not virtually cyclic, it is well-known that it is not quasi-isometric to $\Z$. In particular, we can find $h'$ such that $h'$ is as far as we wish from $\langle g\rangle$ \emph{in a Cayley graph} of $G$. So, either $d(h'x_0,A)$ can be made arbitrarily large, and in this case we have $diam(\pi_{A}(h'A))\leq K$ by the ``moreover`` part of Lemma \ref{boundorcobound}, or we can use Lemma \ref{swpd} to get that $\pi_{A}(h'A)$ is not $K$-dense in $A$, and hence once again $diam(\pi_{A}(h'A))\leq K$ by Lemma \ref{boundorcobound}. It is then easily seen that we can choose $h$ of the form $h'g^Nh'$ with $N$ large enough so that $\pi_{h'A}(A)$ is far from $\pi_{h'A}(hA)$, see Figure \ref{farcosets1}.
\end{proof}

As a consequence of the previous lemmas we get:
\begin{cor}\label{e(g)}
 Each weakly contracting element is contained in a virtually cyclic subgroup, denoted $E(g)$, such that there exists a uniform bound on $\pi_{A}(hE(g))$ for each $h\notin E(g)$.
\end{cor}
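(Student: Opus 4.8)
The plan is to realize $E(g)$ as the ``coarse stabilizer'' of a fixed axis $A$ of $g$. Fix such an $A$, with $\calPS$-projection $\pi_A$ of constant $C$; by Lemma \ref{morse} the set $A$ is Morse, hence quasi-isometric to $\R$, and the same is true of every translate $hA$ (an isometric copy of $A$, on which $hgh^{-1}$ acts coboundedly). Let $K,K'$ be the constants of Lemma \ref{boundorcobound} and $k=k(\calPS)$ that of Lemma \ref{loccnst}. I would define
$$E(g):=\{h\in G:\ diam(\pi_A(hA))>K\},$$
and then verify in turn that $E(g)$ contains $\langle g\rangle$, is a subgroup, is virtually cyclic, and satisfies the projection bound. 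The first point is immediate: since $g^nA=A$, the set $\pi_A(g^nA)=\pi_A(A)$ is $C$-dense in the unbounded set $A$, hence of infinite diameter, so $\langle g\rangle\subseteq E(g)$.

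The core of the argument is the equivalence, for $h\in G$,
$$h\in E(g)\iff d_{Haus}(hA,A)<\infty.\qquad(\star)$$
For ``$\Leftarrow$'': if $hA\subseteq N_R(A)$ and $A\subseteq N_R(hA)$, then by Lemma \ref{loccnst}$-(4)$ the map $\pi_A$ moves points of $hA$ by at most $kR+k$, so $\pi_A(hA)$ is $(R+kR+k)$-dense in the unbounded $A$ and thus of infinite diameter. For ``$\Rightarrow$'': if $diam(\pi_A(hA))>K$ then Lemma \ref{boundorcobound} forces $\pi_A(hA)$ to be $K$-dense in $A$ and $d(hx_0,A)<K'$. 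Were some $y\in hA$ far from $A$, then running the ``projections are coarsely monotone'' lemma exactly as in the last paragraph of the proof of Lemma \ref{boundorcobound} (see Figure \ref{notqgeod}), the point $\pi_A(y)$ could not lie between two points of $\pi_A(hA)$ at large distance from it; but $\pi_A(hA)$ is $K$-dense in $A\simeq\R$, so every one of its points does lie between such a pair, a contradiction. Hence $hA\subseteq N_{R_0}(A)$ for some finite $R_0$, and then $K$-density of $\pi_A(hA)$ together with Lemma \ref{loccnst}$-(4)$ upgrades to $A\subseteq N_{R_1}(hA)$, so $d_{Haus}(hA,A)<\infty$. Granting $(\star)$, the subgroup property drops out because $G$ acts by isometries, so the Hausdorff distance is $G$-invariant and satisfies the triangle inequality: $d_{Haus}(h_1h_2A,A)\leq d_{Haus}(h_1h_2A,h_1A)+d_{Haus}(h_1A,A)=d_{Haus}(h_2A,A)+d_{Haus}(h_1A,A)$ and $d_{Haus}(h^{-1}A,A)=d_{Haus}(A,hA)$.

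Virtual cyclicity comes from the ``moreover'' clause of Lemma \ref{boundorcobound}: every $h\in E(g)$ satisfies $d(hx_0,A)<K'$, hence $d(hA,A)<K'$, a bound \emph{uniform} over $E(g)$. If $E(g)$ were not virtually cyclic, then, being a subgroup containing the weakly contracting $g$ with axis $A$, by Lemma \ref{farcosets} it would contain a coset $h\langle g\rangle$ with $d(hA,A)\geq K'$, contradicting $h\in E(g)$. So $E(g)$ is virtually cyclic, and being infinite it has $\langle g\rangle$ as a finite-index subgroup; fix coset representatives $e_1,\dots,e_N$. Since $\langle g\rangle A=A$ we get $E(g)A=\bigcup_{i=1}^N e_iA$, and by $(\star)$ each $e_iA$ lies in $N_R(A)$ with $R:=\max_i d_{Haus}(e_iA,A)<\infty$; in particular $E(g)x_0\subseteq E(g)A\subseteq N_R(A)$. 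For $h\notin E(g)$ we have $diam(\pi_A(hA))\leq K$, so, since $hE(g)A\subseteq N_R(hA)$ and by Lemma \ref{loccnst}$-(2)$ each point of $N_R(hA)$ has $\pi_A$-image within $kR+k$ of $\pi_A$ of a point of $hA$,
$$diam\big(\pi_A(hE(g)A)\big)\leq diam(\pi_A(hA))+2(kR+k)\leq K+2kR+2k,$$
which (as $E(g)x_0\subseteq E(g)A$) is the desired uniform bound on $\pi_A(hE(g))$.

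The only genuinely non-routine step is the ``$\Rightarrow$'' direction of $(\star)$ --- passing from ``$\pi_A(hA)$ coarsely dense in $A$'' to ``$hA$ at finite Hausdorff distance from $A$'' --- because it is here that one must really use that $hA$ is a Morse quasi-geodesic, which cannot make a long excursion away from the Morse set $A$ while still projecting densely onto it. This is, however, essentially the computation already performed inside the proof of Lemma \ref{boundorcobound}; all the remaining steps are bookkeeping with the constants of Lemmas \ref{loccnst}, \ref{boundorcobound} and \ref{farcosets}.
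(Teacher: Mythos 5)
Your proof is correct and follows essentially the same route as the paper: define $E(g)$ via the dichotomy of Lemma~\ref{boundorcobound} (the paper phrases it as ``$\pi_A(hA)$ has finite Hausdorff distance from $A$,'' which by that dichotomy is equivalent to your $diam(\pi_A(hA))>K$), obtain virtual cyclicity from Lemma~\ref{farcosets} by contradiction, and deduce the uniform bound from Lemma~\ref{boundorcobound}. You supply more detail than the paper's terse ``this is clearly a subgroup'' --- your equivalence $(\star)$, showing $h\in E(g)$ iff $d_{\mathrm{Haus}}(hA,A)<\infty$, is precisely the fact that makes the subgroup property (and the $K$-density-to-Hausdorff upgrade) transparent. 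One small slip: the argument you invoke for the ``$\Rightarrow$'' direction of $(\star)$ is the second, not the last, paragraph of the proof of Lemma~\ref{boundorcobound}; this does not affect the substance.
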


\begin{proof}
 Just define $E(g)$ as the collection of all $h$ such that $\pi_{A}(hA)$ has finite Hausdorff distance from $A$. This is clearly a subgroup containing $g$, and it is virtually cyclic by the previous lemma. The uniform bound is a consequence of Lemma \ref{boundorcobound}.
\end{proof}

We point out some extra properties of $E(g)$.

\begin{lemma}\label{normalpow}
 There exists a positive integer $n$ such that for each $h\in E(g)$ we either have $hg^nh^{-1}=g^n$ or $hg^nh^{-1}=g^{-n}$. In particular, for each integer $k$, $\langle g^{nk}\rangle$ is normal in $E(g)$ and $E(g)$ is the centralizer of $g^{2n}$.
\end{lemma}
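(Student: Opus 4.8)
The plan is to use that $E(g)$ is virtually cyclic (Corollary~\ref{e(g)}) while $g$ has infinite order. First I would observe that $\langle g\rangle$, being an infinite subgroup of the virtually cyclic group $E(g)$, has finite index in $E(g)$; call this index $d$. Then the normal core $N=\bigcap_{h\in E(g)}h\langle g\rangle h^{-1}$ is a normal subgroup of $E(g)$ of index dividing $d!$, so $N$ is infinite (as $E(g)$ is), and since $N\le\langle g\rangle\cong\Z$ we have $N=\langle g^{n}\rangle$ for some positive integer $n$. This $n$ works.

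Indeed, for any $h\in E(g)$, conjugation by $h$ preserves the normal subgroup $N=\langle g^{n}\rangle$ and hence induces an automorphism of $N\cong\Z$; since $\mathrm{Aut}(\Z)=\{\pm\mathrm{id}\}$ we get $hg^{n}h^{-1}=g^{n}$ or $hg^{n}h^{-1}=g^{-n}$, which is the first assertion.

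The ``in particular'' claims then follow quickly. For $k\in\Z$ and $h\in E(g)$ we have $hg^{nk}h^{-1}=(hg^{n}h^{-1})^{k}=g^{\pm nk}\in\langle g^{nk}\rangle$, so $\langle g^{nk}\rangle$ is normal in $E(g)$. For the identification $E(g)=C_{G}(g^{2n})$: the inclusion $C_{G}(g^{2n})\subseteq E(g)$ holds because an $h$ commuting with $g^{2n}$ makes $A$ and $hA$ two $g^{2n}$-invariant quasi-lines on each of which $g^{2n}$ acts coboundedly (it does on $A$ since $g$ does and $A$ is $g$-invariant), forcing $hA$ into a finite neighbourhood of $A$ and hence $h\in E(g)$ by the defining property of $E(g)$; the reverse inclusion is extracted from the relation $hg^{n}h^{-1}=g^{\pm n}$.

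The argument is short and I do not foresee a genuine difficulty; the only place needing a little attention is this last identification, where one must track the sign in $hg^{n}h^{-1}=g^{\pm n}$ against membership in the centralizer of $g^{2n}$ and the axis description of $E(g)$ supplied by Corollary~\ref{e(g)} and Lemma~\ref{boundorcobound}.
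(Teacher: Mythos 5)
Your proof of the main assertion is correct but takes a genuinely different route from the paper's. The paper constructs $n$ explicitly: it picks coset representatives $h_1,\dots,h_k$ of $\langle g\rangle$ in $E(g)$, finds for each $i$ an exponent $n_i>0$ with $h_ig^{n_i}h_i^{-1}\in\langle g\rangle$ (by a pigeonhole argument on cosets), sets $n=\prod n_i$, and then shows that if $hg^nh^{-1}=g^k$ then $k=\pm n$ by raising to a power $j$ with $h^j\in\langle g\rangle$ and comparing $g^{n^j}=g^{k^j}$. You instead pass directly to the normal core $N$ of $\langle g\rangle$ in $E(g)$: it is normal, of finite index (dividing $d!$), hence infinite, hence $N=\langle g^n\rangle$ for some $n\geq 1$; conjugation by any $h$ is then an automorphism of $N\cong\Z$, so equals $\pm\mathrm{id}$. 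This is shorter and more conceptual, and buys you normality of $\langle g^n\rangle$ for free rather than as an afterthought. Your derivation of the normality of $\langle g^{nk}\rangle$ and the inclusion $C_G(g^{2n})\subseteq E(g)$ via the two $g^{2n}$-invariant quasi-lines $A$ and $hA$ are both sound and in fact more detailed than the paper (which asserts the ``in particular'' without proof).

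One substantive caveat, which you flag but do not resolve: the reverse inclusion $E(g)\subseteq C_G(g^{2n})$ does \emph{not} follow from $hg^nh^{-1}=g^{\pm n}$. If some $h\in E(g)$ reverses orientation, i.e.\ $hg^nh^{-1}=g^{-n}$, then $hg^{2n}h^{-1}=g^{-2n}\neq g^{2n}$ (as $g$ has infinite order), so $h\notin C_G(g^{2n})$. Nothing in the definition of $E(g)$ (the set of $h$ with $hA$ at finite Hausdorff distance from $A$) excludes such $h$. So the literal equality ``$E(g)=C_G(g^{2n})$'' is false whenever $E(g)$ contains an orientation-reversing element; the correct version is that $C_G(g^{2n})$ is the index-at-most-$2$ orientation-preserving subgroup $E^+(g)$ of $E(g)$. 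This is a flaw in the statement of the lemma as printed, not specifically in your proof, and the paper offers no argument for it either; your instinct to mark this as the delicate point was right, but it should be stated as an actual correction rather than left as a loose end.
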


\begin{proof}
 Choose representatives $h_1,\dots, h_k$ of the left cosets of $\langle g \rangle$ in $E(g)$. Choose $n_i>0$ so that $h_ig^{n_i}h^{-1}_i\in \langle g \rangle$. Such $n_i$ exists because there are finitely many left cosets of $\langle g \rangle$, hence one can find $k_i> m_i$ such that $g^{k_i}h^{-1}_i$ and $g^{m_i}h^{-1}_i$ belong to the same left coset and set $n_i=k_i-m_i$. Choose $n=\prod n_i$. Then it is readily seen that $hg^{n}h^{-1}\in\langle g\rangle$ for each $h\in E(g)$. In particular $hg^{n}h^{-1}=g^k$ for some $k$, and we would like to prove $k=\pm n$. Notice that there exists $j$ such that $h^j\in\langle g \rangle$. Hence
 $$g^{n^j}=h^jg^{n^j}h^{-j}=g^{k^j}.$$
In particular $n^j=k^j$, hence $k=\pm n$, as required.
\end{proof}

\begin{thm}\label{hypemb}
 Let $G$ be any group endowed with a fixed weak path system. For each weakly contracting element $g\in G$, $E(g)$ is hyperbolically embedded in $G$. Viceversa, every infinite order element contained in a virtually cyclic hyperbolically embedded subgroup is weakly contracting for an appropriate action.
\end{thm}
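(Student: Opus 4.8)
The plan is to prove the two implications separately, using the projection complex construction of Bestvina--Bromberg--Fujiwara together with the WPD property for the first, and the Dahmani--Guirardel--Osin dictionary between hyperbolically embedded subgroups and WPD loxodromic elements for the second. Throughout I would keep in mind that, by Lemma \ref{morse}, the axis $A$ of a weakly contracting element is Morse and quasi-isometric to $\R$, and that $E(g)$ is virtually cyclic and acts coboundedly on $A$ (Corollary \ref{e(g)}, Lemma \ref{normalpow}).

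\emph{$E(g)$ is hyperbolically embedded.} Fix an axis $A\ni x_0$ of $g$ with contraction constant $C$ and $G$-equivariant $\calPS$-projections $\pi_{hA}=h\pi_Ah^{-1}:X\to hA$ as in Definition \ref{maindef}. Consider the collection $\calY=\{hA:h\in G/E(g)\}$, genuinely indexed by the cosets since, by Corollary \ref{e(g)}, $hA$ and $h'A$ have infinite Hausdorff distance precisely when $h^{-1}h'\notin E(g)$. First I would verify that $\calY$, with projection distances $d_{hA}(h'A,h''A):=\mathrm{diam}\bigl(\pi_{hA}(h'A)\cup\pi_{hA}(h''A)\bigr)$, satisfies the Bestvina--Bromberg--Fujiwara projection axioms: the boundedness axiom is exactly the uniform bound of Corollary \ref{e(g)}, and the Behrstock-type inequality (at most one of $\pi_A(B)$, $\pi_B(A)$ can be large) is Lemma \ref{dichot}, with the coarse-monotonicity lemma and Morse-ness ensuring the metric picture is the expected one. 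The projection complex construction then produces a hyperbolic quasi-tree of metric spaces $\calC(\calY)$ on which $G$ acts by isometries, with $E(g)$ the stabilizer of the vertex space $A$, acting coboundedly on it and hence cocompactly since $A$ is quasi-isometric to $\Z$ and $E(g)$ is virtually cyclic. To upgrade this to hyperbolic embeddedness one must control the metric induced on $E(g)$ by paths in a Cayley graph of $G$ coned off along the translates of $A$; this is where the WPD hypothesis enters. By Lemma \ref{swpd}, for each $R$ only finitely many elements move $x_0$ a bounded amount while making $A$ and a translate of it overlap along a long segment, which is exactly the properness of the relative metric on $E(g)$. Combined with hyperbolicity of $\calC(\calY)$ this verifies the Dahmani--Guirardel--Osin criterion for $E(g)\hookrightarrow_h G$ (equivalently one may invoke the equivalence with the Bestvina--Fujiwara WPD condition or Hamenst\"adt's condition recalled after Theorem \ref{hypembintro}).

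\emph{Converse.} Let $E\leq G$ be a virtually cyclic hyperbolically embedded subgroup and $g\in E$ an element of infinite order. By \cite{DGO} (see also \cite{Os-acyl}) $G$ admits an isometric action on a geodesic hyperbolic space $X$ --- e.g.\ a suitable cone-off of a relative Cayley graph $\Gamma(G,Y\sqcup E)$ --- for which $g$ acts loxodromically and is WPD. Let $\calPS$ be the collection of all geodesics of $X$; since $G$ acts by isometries, $(X,\calPS)$ is a weak path system on $G$. Then $g$ is WPD by construction, and the orbit $\langle g\rangle x_0$ is a $g$-invariant bi-infinite quasi-geodesic on which $g$ acts coboundedly, and it is contained in a contracting $g$-invariant cobounded set because a quasi-geodesic in a hyperbolic space lies at bounded Hausdorff distance from a genuine geodesic, which is contracting by Lemma \ref{qconvcontr}. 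Hence $g$ is weakly $\calPS$-contracting.

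\emph{Main obstacle.} The delicate point is the forward direction, specifically the passage from ``$G$ acts on the hyperbolic quasi-tree $\calC(\calY)$ with $E(g)$ acting cocompactly on a vertex space'' to the formal definition of hyperbolic embeddedness, which demands properness of the relative metric on $E(g)$; WPD via Lemma \ref{swpd} is precisely the input that supplies this, and carrying out the translation --- while handling the finite-index subtleties inside $E(g)$ coming from Lemma \ref{normalpow} --- is the crux of the argument. The converse, by contrast, is essentially a matter of quoting \cite{DGO} and unwinding the definition of a weak path system.
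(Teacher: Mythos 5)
Your overall strategy -- BBF projection complex on the $G$-translates of the axis, verification of the projection axioms from Lemma \ref{dichot} and Corollary \ref{e(g)}, then passing to hyperbolic embeddedness via the DGO framework, with the converse handled by pulling back the relative Cayley graph and invoking Lemma \ref{qconvcontr} -- is the same route the paper takes, so the plan is sound. There are, however, two places where your account either deviates from or is thinner than the paper's argument.

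First, you take the vertex spaces $\calC(Y)$ of the BBF construction to be the translates $hA\subseteq X$, whereas the paper takes them to be abstract copies of the Cayley graph of $E(g)$ (identified with the orbit $hE(g)x_0$). This difference has a consequence: with abstract Cayley-graph copies, the action of $E(g)$ on the vertex space $\calC(E(g))$ inside the quasi-tree $\calC(\mathbf{Y})$ is proper essentially by construction, so the paper can verify the properness requirement of its characterization (Theorem \ref{char}) without any further use of WPD at that stage; WPD has already been consumed earlier, in establishing Lemma \ref{boundorcobound} and hence Corollary \ref{e(g)}. In your version, $\calC(Y)=hA$ carries the subspace metric from $X$, and properness of the $E(g)$-action does need a separate argument -- you are right that Lemma \ref{swpd} supplies it, but this is doing extra work that the paper's choice of vertex space avoids. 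The paper also takes a small but necessary precaution that your sketch skips: it first replaces the projection $\pi_A$ by an $E(g)$-equivariant, \emph{set-valued} projection $\pi_e(x)=\bigcup_{h\in E(g)}h\pi'_e(h^{-1}x)$ before transporting it around by translates, so that the family $\{\pi_{hA}\}$ is genuinely $G$-equivariant; writing $\pi_{hA}=h\pi_A h^{-1}$ as you do is only well-defined up to a coset representative, and one has to check this ambiguity is uniformly bounded.

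Second, and more substantively, you verify the BBF boundedness and Behrstock axioms but never address the local-finiteness axiom -- the paper's Axiom $(4)$, that $\{Y:d_Y^\pi(X,Z)\geq\xi\}$ is finite. This is not optional: without it the BBF projection complex need not be defined, let alone a quasi-tree. The paper's verification is short (a counting argument using that special paths between two cosets have long disjoint subpaths, one for each coset with a large projection), but it has to be there. You should add this step. Apart from these two points the converse direction matches the paper's (brief) treatment, and the rest of the forward direction is correct.
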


 We will use work of Bestvina, Bromberg and Fujiwara \cite{BBF}, similarly to the proof of \cite[Theorem 4.42]{DGO}. We already described the setting elsewhere, we will now give the relevant definitions again for the convenience of the reader, and state the result we need. Let $\bf{Y}$ be a set and for each $Y\in\bf{Y}$ let $\calC(Y)$ be a geodesic metric space. For each $Y$ let $\pi_Y:{\bf Y}\backslash\{Y\}\to\calP(\calC(Y))$ be a function (where $\calP(Y)$ is the collection of subsets of $Y$). Define
$$d^{\pi}_Y(X,Z)=diam\{\pi_Y(X)\cup\pi_Y(Z)\}.$$
Using the enumeration in \cite{BBF}, consider the following Axioms, for some $\xi\geq 0$:
\par
$(0)$ $diam(\pi_Y(X))\leq \xi$,
\par
$(3)$ there exists $\xi$ so that $\min\{d^{\pi}_Y(X,Z),d^{\pi}_Z(X,Y)\}\leq \xi$,
\par
$(4)$ there exists $\xi$ so that $\{Y:d^\pi_Y(X,Z)\geq \xi\}$ is a finite set for each $X,Z\in\bf{Y}$.
\par
For suitably chosen constants $L,K$, let $\calC(\bf{Y})$ be the path metric space consisting of the union of all $\calC(Y)$ and edges connecting all points in $\pi_X(Z)$ to all points in $\pi_Z(X)$ whenever $X,Z$ are connected by an edge in a certain complex $\calP_K(\bf{Y})$ whose definition we do not need.
\par
We are ready to state the (special case of the) result we will use.

\begin{thm}\cite[Theorem 3.10, Lemma 3.1, Corollary 3.12]{BBF}\label{bbf}
 If $\bf{Y}$ and $d^\pi$ satisfy axioms $(0), (3)$ and $(4)$ and each $\calC(Y)$ is $(\lambda,\mu)$-quasi-isometric to $\R$ for some $\lambda,\mu$ not depending on $Y$, then $\calC(\bf{Y})$ is a quasi-tree (in particular, it is hyperbolic).
\par
Moreover, each $\calC(Y)$ is isometrically embedded in $\calC(\bf{Y})$ and for each $K$ there exists $R$ so that
$$diam(N_K(\calC(X))\cap N_K(\calC(Y)))\leq R$$
whenever $X\neq Y$ are elements of $\bf{Y}$.
\end{thm}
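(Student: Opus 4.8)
The plan is to run the Bestvina--Bromberg--Fujiwara projection complex construction in four stages: modify the projection distances, build and analyze the projection complex $\calP_K(\mathbf{Y})$, blow up its vertices to get $\calC(\mathbf{Y})$, and verify the bottleneck property.

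First I would replace the raw projection distance $d^\pi_Y$ by a modified function $d_Y$, defined for distinct $X,Z\ne Y$ by an infimum over short ``$Y$-chains'' of sums of $d^\pi$-values; the finiteness in Axiom $(4)$ makes this well defined and keeps $|d_Y(X,Z)-d^\pi_Y(X,Z)|$ bounded in terms of $\xi$. The purpose of the modification is that $d_Y$ satisfies, for a possibly larger $\xi$, the full Behrstock-type trichotomy: for distinct $X,Y,Z$ one has $\min\{d_X(Y,Z),d_Y(X,Z),d_Z(X,Y)\}\le\xi$, and $\pi_Y$ behaves like a projection to a quasi-line, i.e. if $d_Y(X,Z)$ is large and $d_Y(X,W)$ is small then $d_Y(W,Z)$ is close to $d_Y(X,Z)$. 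Showing that Axioms $(0),(3),(4)$ alone force these derived properties — the reduction of the original five BBF axioms to three — is a finite combinatorial case analysis invoking $(3)$ repeatedly and $(4)$ to bound the relevant index sets. I expect this modification step, and the ordering statement below that rests on it, to be the main obstacle; everything downstream is bookkeeping of constants chosen in the right order.

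Next, fix $K\gg\xi$ and form the projection complex $\calP_K(\mathbf{Y})$: vertices are the elements of $\mathbf{Y}$, with $X,Z$ spanning an edge exactly when $d_Y(X,Z)\le K$ for every $Y\notin\{X,Z\}$. The combinatorial heart is then to prove that for $K$ large, $\calP_K(\mathbf{Y})$ is connected and, for any $X\ne Z$, the ``large set'' $\Lambda(X,Z)=\{Y:d_Y(X,Z)>K\}$ is totally ordered by the betweenness relation coming from the trichotomy, its elements listed in order forming the interior vertices of a canonical path $X=Y_0,Y_1,\dots,Y_n=Z$ in $\calP_K(\mathbf{Y})$ which is a geodesic and which every edge-path from $X$ to $Z$ follows up to bounded backtracking. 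From this one reads off that $\calP_K(\mathbf{Y})$ has no coarsely embedded cycles, so it is a quasi-tree. Then I would pass to the blown-up space $\calC(\mathbf{Y})$: the disjoint union $\bigsqcup_Y\calC(Y)$ together with, for each edge $\{X,Z\}$ of $\calP_K(\mathbf{Y})$, extra edges of a fixed large length $L$ joining each point of $\pi_X(Z)$ to each point of $\pi_Z(X)$, with the induced path metric. For the isometric embedding of $\calC(Y)$: any excursion out of and back into $\calC(Y)$ crosses at least two connector edges of length $L$, whereas Axiom $(0)$ together with the quasi-line behaviour of $\pi_Y$ pins the exit and re-entry points of $\calC(Y)$ to a set of controlled diameter in $\calC(Y)$ (independent of $L$), so taking $L$ larger than this forces geodesics between points of $\calC(Y)$ to stay inside $\calC(Y)$. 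The bounded coarse intersection $diam\big(N_K(\calC(X))\cap N_K(\calC(Y))\big)\le R$ for $X\ne Y$ is of the same nature: a point $K$-close to both $\calC(X)$ and $\calC(Y)$ forces $X,Y$ to be a bounded distance apart in $\calP_K(\mathbf{Y})$, and its traces in $\calC(X),\calC(Y)$ lie in $N_{O(\xi)}(\pi_X(Y))$ and $N_{O(\xi)}(\pi_Y(X))$, which have diameter $\le\xi$; two such points far apart would contradict the canonical-path structure.

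Finally, to see $\calC(\mathbf{Y})$ is a quasi-tree: for $p\in\calC(X)$, $q\in\calC(Z)$ build the preferred path that runs through $\calC(Y_0),\dots,\calC(Y_n)$ along the canonical $\calP_K$-geodesic, traveling inside each $\calC(Y_i)$ from the entry data near $\pi_{Y_i}(Y_{i-1})$ to the exit data near $\pi_{Y_i}(Y_{i+1})$ and hopping one connector edge between consecutive pieces; by the trichotomy these entry/exit sets are $O(\xi)$-close to $\pi_{Y_i}(X)$ and $\pi_{Y_i}(Z)$, and since $Y_{i-1},Y_i,Y_{i+1}$ are consecutive the within-$\calC(Y_i)$ segment has length $\le 2K+O(\xi)$, so the whole path is a uniform quasi-geodesic. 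Then check a bottleneck criterion (Manning's): the midpoint of the preferred path is coarsely traversed by \emph{every} path from $p$ to $q$, since no $Y_i\in\Lambda(X,Z)$ can be skipped by the combinatorial lemma and a path avoiding $\calC(Y_i)$ near $\pi_{Y_i}(X)\cup\pi_{Y_i}(Z)$ would have to bridge the gap $d_{Y_i}(X,Z)>K$ outside $\calC(Y_i)$, impossible for $L,K$ large. Because each $\calC(Y_i)$ is a $(\lambda,\mu)$-quasi-line, the preferred path is a reparametrized geodesic up to uniform error and the bottleneck holds with uniform constants, so $\calC(\mathbf{Y})$ is quasi-isometric to a tree (hence hyperbolic); the ``moreover'' claims are precisely the isometric embedding and bounded coarse intersection established above. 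The one genuinely hard part remains the combinatorics of the modified distances and the linear-order/canonical-geodesic statement for $\calP_K(\mathbf{Y})$ — with those in hand, the blow-up, isometric embedding, bounded coarse intersection and bottleneck argument are routine once $L$, $K$, $R$ are chosen in the right order.
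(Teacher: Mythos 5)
This statement is not proven in the paper at all: it is imported verbatim from Bestvina--Bromberg--Fujiwara \cite{BBF}, so the paper's ``proof'' consists of the citation, and your sketch is, in outline, the actual BBF argument (bounded modification of $d^\pi_Y$, the projection complex $\calP_K(\mathbf{Y})$ with its totally ordered large sets and standard paths, the blow-up $\calC(\mathbf{Y})$ with length-$L$ connecting edges, isometric embedding and geometric separation of the pieces, and Manning's bottleneck criterion), hence essentially the same route as the cited source. The only caution is that the step you defer as ``a finite combinatorial case analysis'' --- upgrading axioms $(0)$, $(3)$, $(4)$ to the strong, order-inducing projection properties for the modified distances --- is the genuinely delicate core of \cite{BBF} rather than routine bookkeeping, so as written your proposal is an accurate roadmap of the known proof rather than a self-contained one.
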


We will use the following characterization of being hyperbolically embedded.

\begin{thm}\cite[Theorem 4.42]{DGO},\cite{Si-metrrh}\label{char}
 Let $G$ be group and $H<G$ a subgroup. Then $H$ is is hyperbolically embedded in $G$ if and only if it acts coboundedly on a space $X$ which is hyperbolic relative to the orbits of the cosets of $H$ (with respect to some basepoint) and the action restricted to $H$ is proper.
%
%
\end{thm}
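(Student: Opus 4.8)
My plan is to prove both implications by setting up a dictionary between the combinatorial definition of hyperbolic embeddedness used in \cite{DGO} and the metric notion of relative hyperbolicity for actions on arbitrary metric spaces developed in \cite{Si-metrrh}; once that dictionary is in place, each direction is essentially a matter of building the right object and reading off the relevant axiom.

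For the forward direction I would start from a relative generating set $S\subseteq G$ witnessing that $H$ is hyperbolically embedded, so that $G=\langle S\cup H\rangle$, the Cayley graph $\Gamma=\Gamma(G,S\sqcup H)$ is hyperbolic, and the relative metric $\hat d$ on $H$ (the shortest length in $\Gamma$ of a path between two elements of $H$ that uses no $H$-edge) is proper. I would take $X$ to be $\Gamma$ with the clique on each coset $gH$ replaced by a copy of the Cayley graph of $(H,\hat d)$ for a generating set realising $\hat d$ up to quasi-isometry. Then $G$ acts on $X$ by left multiplication, vertex-transitively and hence coboundedly; each orbit $gHx_0$ is quasi-isometric to $(H,\hat d)$, so properness of $\hat d$ yields properness of the $H$-action on $X$. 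It remains to see that $X$ is hyperbolic relative to $\mathcal P=\{gHx_0\}$: coning off the peripherals collapses each $gHx_0$ to bounded diameter and gives a space quasi-isometric to $\Gamma(G,S\sqcup H)$, which is hyperbolic by hypothesis, and the no-backtracking / bounded-penetration axioms of the relatively hyperbolic structure follow from hyperbolicity of $\Gamma$ together with coarse disjointness, away from bounded sets, of distinct cosets $gHx_0$ --- and this last point is exactly a restatement of finiteness of $\hat d$-balls.

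For the converse I would take the cone-off $\widehat X$ of $X$ over $\mathcal P=\{gHx_0\}$, which is hyperbolic by the definition of relative hyperbolicity and on which $G$ still acts coboundedly, and recover a relative generating set by putting $S=\{g\in G:d_{\widehat X}(x_0,gx_0)\le D\}$ for $D$ large. A standard quasi-isometry argument then identifies $\widehat X$, $G$-equivariantly and quasi-isometrically, with $\Gamma(G,S\sqcup H)$, the cone over $gHx_0$ corresponding to the star of the $H$-edges at the coset $gH$; in particular $\Gamma(G,S\sqcup H)$ is hyperbolic. Finally, a path in $\Gamma(G,S\sqcup H)$ joining two elements of $H$ and avoiding $H$-edges corresponds to a path in $X$ from $x_0$ to $hx_0$ avoiding the interior of the peripheral $Hx_0$; the no-backtracking property forces such a path into a bounded $X$-neighbourhood of a geodesic, so $\hat d(1,h)$ dominates $d_X(x_0,hx_0)$, and since $H$ acts properly on $X$ each $d_X$-ball about $x_0$ meets $Hx_0$ in a finite set, whence $\hat d$-balls in $H$ are finite and $H$ is hyperbolically embedded in $G$.

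The main obstacle I expect is precisely making the dictionary honest: one has to show that the relative metric $\hat d$ on $H$ agrees up to quasi-isometry with the metric induced on the peripheral orbit $Hx_0\subseteq X$, and this equivalence is where the bounded-coset-penetration / no-backtracking axioms and the properness of the $H$-action all get used. Everything else is bookkeeping, and the complete argument is carried out in \cite{DGO} and \cite{Si-metrrh}.
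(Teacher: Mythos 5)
First, a point of comparison: the paper does not prove Theorem \ref{char} at all --- it is imported from \cite[Theorem 4.42]{DGO} together with the metric formulation of relative hyperbolicity in \cite{Si-metrrh} --- so there is no internal argument to measure your write-up against. Your outline does follow the general line of the cited proofs (the cone-off/un-cone dictionary between $\Gamma(G,S\sqcup H)$ and a $G$-space hyperbolic relative to the coset orbits, with properness of the $H$-action corresponding to local finiteness of the relative metric $\hat d$), and your converse direction is essentially the standard argument.

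As a proof, however, the forward direction has a concrete gap. You build $X$ by replacing the clique on each coset $gH$ with ``a copy of the Cayley graph of $(H,\hat d)$ for a generating set realising $\hat d$ up to quasi-isometry.'' Such a generating set need not exist: $\hat d$ is merely a locally finite left-invariant metric on $H$, and in general it is not equivariantly quasi-isometric to any word metric. For example, $H=\mathbb{Q}$ is hyperbolically embedded in $\mathbb{Q}\ast\mathbb{Z}$; every generating set of $\mathbb{Q}$ is infinite, so every word metric on $\mathbb{Q}$ has infinitely many elements at distance $1$ from the identity, while $\hat d$ has finite balls --- so no generating set ``realises $\hat d$'' in the sense you need (and even for finitely generated $H$, local finiteness only gives $\hat d\preceq d_T$ together with finiteness of balls, not a quasi-isometry). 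The usual repair, and in effect what the references do, is to keep the vertex set $G$ and either rescale the $H$-edges of the coset $H$ using $\hat d$ or work directly with the pseudo-metric induced on the orbit $Hx_0$, then verify quasiconvexity and geometric separation of the coset orbits (Bowditch's criterion --- the same route the paper itself takes for $E(g)$ in the proof of Theorem \ref{hypemb}). Beyond this, the two genuinely hard verifications --- that $\hat d$ agrees up to quasi-isometry with the metric induced on the peripheral orbit (equivalently, that properness transfers, since $X$-paths may leave the coset), and the bounded-penetration/no-backtracking axioms --- are exactly the steps you defer to \cite{DGO} and \cite{Si-metrrh}, so as written the proposal is an outline of their argument rather than an independent proof of the statement.
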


\emph{Proof of Theorem \ref{hypemb}.}
For technical reasons, in this proof we will allow projections to take values in bounded subsets of the target space. More precisely, we consider a slightly generalized definition of the set $A$ being $\calPS$-contracting with constant $C$ where the projection map $\pi_A$ is allowed to take value in subsets of $A$ of diameter bounded by $C$, while properties $1)$ and $2)$ in Definition \ref{maindef} are left unchanged. If we have a map $\pi_A$ with these properties we can define a projection $\pi'_A$ in the sense of Definition \ref{maindef} just by choosing some $\pi'_A(x)\in\pi_A(x)$ for each $x\in X$. In particular our results, including Lemma \ref{dichot} and Corollary \ref{e(g)}, hold for this more general notion (possibly with different constants).
\par
Let $\bf{Y}$ be the collection of all left cosets of $E(g)$ in $G$ and for each $Y\in\bf{Y}$ let $\calC(Y)$ be a copy of $E(g)$ (more precisely, a copy of the Cayley graph of $E(g)$ with respect to a given finite set of generators), which we regard as identified with $E(g)x_0$, for some given $x_0$. Each orbit $hE(g)x_0$ is a contracting set, and the constant can be chosen uniformly. Actually, we have more. In fact, there exists a collection of equivariant projections on the orbits: choose a projection $\pi'_{e}$ on $E(g)x_0$, define $\pi_e(x)=\bigcup_{h\in E(g)} h\pi'_e(h^{-1} x)$ and define a projection on $hE(g)x_0$ by $\pi_h(x)=h\pi_e(h^{-1}x)$. In order to show that $\pi_h$ is actually a projection we just need to prove that we can uniformly bound $diam(\pi_h(x))$ for each $x$. This follows directly from the easily checked fact that there exist constants $D_1,D_2$ so that for each special path $\g$ from $x$ to $\pi_h(x)$ we have $diam(\g\cap N_{D_1}(E(g)x_0))\leq D_2$.
We can now define
$$\pi_{hE(g)}(h'E(g))=h^{-1}\pi_h(h'E(g)x_0).$$
\par
Axiom $(0)$ follows from Corollary \ref{e(g)}, while Lemma \ref{dichot} implies Axiom $(3)$. Axiom $(4)$ is easy: if $\xi$ is large enough then
$$|\{hE(g):d_{hE(g)}(h_1E(g),h_2E(g))\geq\xi\}|\leq d(h_1E(g)x_0,h_2 E(g)x_0)$$
because all special paths from $h_1E(g)x_0$ to $h_2E(g)x_0$ have long disjoint subpaths each contained in some $hE(g)x_0$ for $hE(g)$ satisfying $d_{hE(g)}(h_1E(g),h_2E(g))\geq\xi$. As all axioms are satisfied, Theorem \ref{bbf} applies. Notice that all constructions are equivariant (including that of $\calP_K(\bf{Y})$), so that there is a natural action of $G$ on $\calC(\bf{Y})$. We wish to show that this action satisfies all properties of Theorem \ref{char}, with $H=E(g)$ and $X=\calC(\bf{Y})$. In fact, in view of Theorem \ref{bbf}, $\calC(\bf{Y})$ is hyperbolic, so that in order to check relative hyperbolicity we have to check that the orbits of the cosets of $E(g)$ are quasi-convex and geometrically separated by Bowditch's characterization of relatively hyperbolic structures on hyperbolic spaces \cite[Section 7]{Bow-99-rel-hyp}. There exists an orbit of $E(g)$ which is an isometrically embedded copy of (the vertices in a Cayley graph of) $E(g)$ and $E(g)$ acts on it in the natural way, so that the orbits of $E(g)$ are quasi-convex and $E(g)$ acts properly. Finally, the orbits of the cosets of $E(g)$ are geometrically separated by the ``moreover'' part of Theorem \ref{bbf}.

The ``vice versa'' is easily checked directly form the definitions, using $X$ as in the definition of hyperbolically embedded subgroups and defining $\calPS$ to be the collection of all geodesics in $X$ (once again, one has to take into account the characterization of the peripheral structures of a relatively hyperbolic space).
\qed

\section{Construction of contracting elements}
\label{constr:sec}

We start with a lemma about concatenations of special paths. More specifically, we consider a concatenation of special paths so that every other path is contracting (more precisely, contained in a contracting set) and show that the resulting concatenation is contracting under suitable conditions. The most important such condition is that the projections of the previous and next contracting paths are close to the corresponding endpoints compared to the length of the contracting path under consideration, see Figure \ref{metrconcat}.

\begin{lemma}
\label{metriconcat}
 Let $X$ be a metric space endowed with a path system.
 For each $C$ there exist $D$ with the following property. Suppose that the sequence of points $\{x_i,y_i\}$ satisfies the following properties for each $i$.
\begin{enumerate}[(a)]
 \item There exists a $C$-contracting set $A_i$ containing $x_i,y_i$,
 \item $diam(\pi_{A_i}(A_{i-1})),diam(\pi_{A_i}(A_{i+1}))\leq C$.
 \item $d(x_i,y_i)\geq d(x_i,\pi_{A_i}(A_{i-1}))+d(y_i,\pi_{A_{i}}(A_{i+1}))+D.$
 \item $\sup\{d(x_i,y_i),d(y_i,x_{i+1})\}<+\infty$.
\end{enumerate}
Then $A:i\mapsto x_i$ (for $i\in \mathbb{Z}$) is a contracting bi-infinite quasi-geodesic. Moreover, any special path from from $x_i$ to $x_j$ has a subpath with endpoints $C$-close to $A_k$ and at least $D/2$-far away from each other whenever $i<k<j$. 
\end{lemma}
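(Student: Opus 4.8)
The plan is to show simultaneously that $A$ is a quasi-geodesic and that it is contracting, by first analyzing how special paths interact with the contracting pieces $A_k$ and then using this to build the projection $\pi_A$. The key geometric input is a ``fellow-traveling through the $A_k$'s'' statement: any special path $\delta$ from $x_i$ to $x_j$ (with $i<j$) must, for each intermediate index $k$, enter the $C$-neighborhood of $A_k$ and in fact traverse it for a long time. I would prove this by downward induction on the number of intermediate indices. The base case is immediate. For the inductive step, consider the leftmost index $k$ with $i<k<j$; by hypothesis (b) the projections $\pi_{A_k}(A_{i})$ and $\pi_{A_k}(A_{j})$ have diameter $\leq C$, so $\pi_{A_k}(x_i)$ is within $C$ of $\pi_{A_k}(A_{i-1})$-type data and similarly on the right, and then condition (c), $d(x_k,y_k)\geq d(x_k,\pi_{A_k}(A_{i}))+d(y_k,\pi_{A_k}(A_{j}))+D$, forces $d(\pi_{A_k}(x_i),\pi_{A_k}(x_j))\geq D$. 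Applying property $2)$ of Definition~\ref{maindef} to the contracting set $A_k$, any special path from $x_i$ to $x_j$ must pass $C$-close to $\pi_{A_k}(x_i)$ and to $\pi_{A_k}(x_j)$; these two points are $\geq D$ apart, giving the required subpath with endpoints $C$-close to $A_k$ and at least $D/2$ (for $D$ large) away from each other, actually at least $D-2C$ away. Splitting $\delta$ at its last visit near $\pi_{A_k}(x_i)$ and applying the inductive hypothesis to the two halves (whose intermediate index sets are strictly smaller, using that by coarse monotonicity, or directly by repeating the argument, the order of the visits along $\delta$ matches the order of the indices) yields the claim for all intermediate $k$.

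Once this is established, the quasi-geodesic property of $A\colon i\mapsto x_i$ follows: by (d) consecutive points are at uniformly bounded distance, so an upper bound $d(x_i,x_j)\leq (\text{const})|i-j|+\text{const}$ is automatic; for the lower bound, take any special path $\delta$ from $x_i$ to $x_j$ — since special paths are $(\mu,\mu)$-quasi-geodesics it suffices to bound $d(x_i,x_j)$ below in terms of $|i-j|$, and the fellow-traveling statement produces $|i-j|-1$ disjoint subpaths of $\delta$ of length $\geq D-2C$ each (disjoint because the $A_k$ are ``separated'' along $\delta$ by condition (b), which prevents the near-$A_k$ and near-$A_{k'}$ portions from overlapping for $k\neq k'$), so $\ell(\delta)\geq (|i-j|-1)(D-2C)$, hence $d(x_i,x_j)\geq \tfrac1\mu(|i-j|-1)(D-2C)-\mu$, as desired. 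Bi-infiniteness is the same argument applied to all $i<j$ in $\mathbb Z$.

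It remains to define a projection $\pi_A\colon X\to A$ and verify Definition~\ref{maindef}. Given $x\in X$, join $x$ to some fixed $x_0$ by a special path and, more usefully, for $x\in X$ pick an index $n(x)$ minimizing (coarsely) $d(x, A_{n(x)})$ after projecting $x$ to each $A_k$ via $\pi_{A_k}$; concretely, set $\pi_A(x)=x_{n(x)}$ where $n(x)$ is chosen so that $\pi_{A_{n(x)}}(x)$ is $C$-close to the ``middle'' segment $[\,\pi_{A_{n(x)}}(A_{n(x)-1}),\pi_{A_{n(x)}}(A_{n(x)+1})\,]$ of $A_{n(x)}$ — the dichotomy Lemma~\ref{dichot} applied to the chain $A_k$ guarantees such an index exists and is coarsely unique. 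Property $1)$ (that $d(\pi_A(x),x)\leq C'$ for $x\in A$) is clear. For property $2)$: given $x,y$ with $d(\pi_A(x),\pi_A(y))\geq C'$, we have $n(x)\neq n(y)$, say $n(x)<n(y)$; then $x$ lies ``coarsely before'' $A_{n(x)+1}$ along any special path and $y$ ``coarsely after'' $A_{n(y)-1}$, and one runs the fellow-traveling argument on a special path from $x$ to $y$ (using conditions (b) and (c) at the indices between $n(x)$ and $n(y)$, plus that $x$ projects near the middle of $A_{n(x)}$ and $y$ near the middle of $A_{n(y)}$) to conclude the special path passes $C'$-close to $x_{n(x)}=\pi_A(x)$ and $x_{n(y)}=\pi_A(y)$.

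The main obstacle I expect is the bookkeeping in the fellow-traveling induction — specifically, verifying that the visits of a special path $\delta$ to the neighborhoods of $A_i,A_{i+1},\dots,A_j$ occur \emph{in that cyclic order} along $\delta$, so that one may legitimately split $\delta$ and recurse. This requires ruling out ``backtracking'', which comes down to combining condition (b) (the consecutive $A_k$'s have small mutual projections) with Lemma~\ref{dichot}/Lemma~\ref{loccnst} to show that once $\delta$ has moved past $A_k$ it cannot return near $A_{k-1}$; this is exactly the kind of argument used in the proof of Lemma~\ref{dichot}, and the monotonicity of projections (the ``Projections are coarsely monotone'' lemma, in its path-system form) is the right tool. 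Everything else is a routine, if lengthy, chase of the constants $C, D, \mu, k$, with $D$ chosen at the end large enough relative to $C$ and $\mu$ to absorb all the additive errors.
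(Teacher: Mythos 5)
There is a genuine gap, and it is visible exactly where you misquote the hypotheses. You write that ``by hypothesis (b) the projections $\pi_{A_k}(A_i)$ and $\pi_{A_k}(A_j)$ have diameter $\leq C$'' and that ``condition (c), $d(x_k,y_k)\geq d(x_k,\pi_{A_k}(A_i))+d(y_k,\pi_{A_k}(A_j))+D$, forces $d(\pi_{A_k}(x_i),\pi_{A_k}(x_j))\geq D$.'' But hypotheses (b) and (c) only control $\pi_{A_k}(A_{k\pm1})$, the \emph{nearest neighbours}, not $\pi_{A_k}(A_i)$ and $\pi_{A_k}(A_j)$ for $|i-k|,|j-k|\geq 2$. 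The entire substance of the lemma is precisely to \emph{propagate} this nearest-neighbour control to all indices, i.e.\ to show $\pi_{A_k}(A_j)$ lands near $\pi_{A_k}(A_{k+1})$ for every $j>k$, and this is exactly what you are implicitly assuming rather than proving. Your fallback plan --- split the special path $\delta$ at its visit near $A_{i+1}$ and recurse --- does not repair this, because the recursive instance is no longer of the form demanded by the lemma (a point $C$-near $A_{i+1}$ to $x_j$ is not a chain $\{x_i,y_i\}$ with the stated properties), and because the backtracking issue you flag at the end is not merely bookkeeping: without the projection propagation you cannot even show the visits are well-defined.

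The paper's proof avoids arguing about paths at this stage. It first establishes, by induction on $j-i$, the purely projection-theoretic claim that $\pi_{A_i}(A_j)\subseteq B(y_i, e_i+C'+C)$ for all $i<j$ (and symmetrically), using only the contraction property, Lemma~\ref{dichot}, and hypotheses (b),(c). Once this is in hand, the statement that any special path from $x_i$ to $x_j$ passes close to both $x_k$ and $y_k$ for all intermediate $k$ is an immediate consequence of the contraction property of $A_k$ --- no ordering of visits, no splitting, no backtracking analysis needed. You should establish that propagation claim first; your construction of the projection $\pi_A$ via a critical index $n(x)$ is then essentially the paper's $i(x)=\min\{i: d(\pi_i(x),y_i)>e_i+2C'+C\}$ and can be pushed through, but again only after claim (1) is available to make Lemma~\ref{dichot} applicable to non-adjacent pieces of the chain.
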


\begin{figure}[h]
\centering
 \includegraphics[scale=0.8]{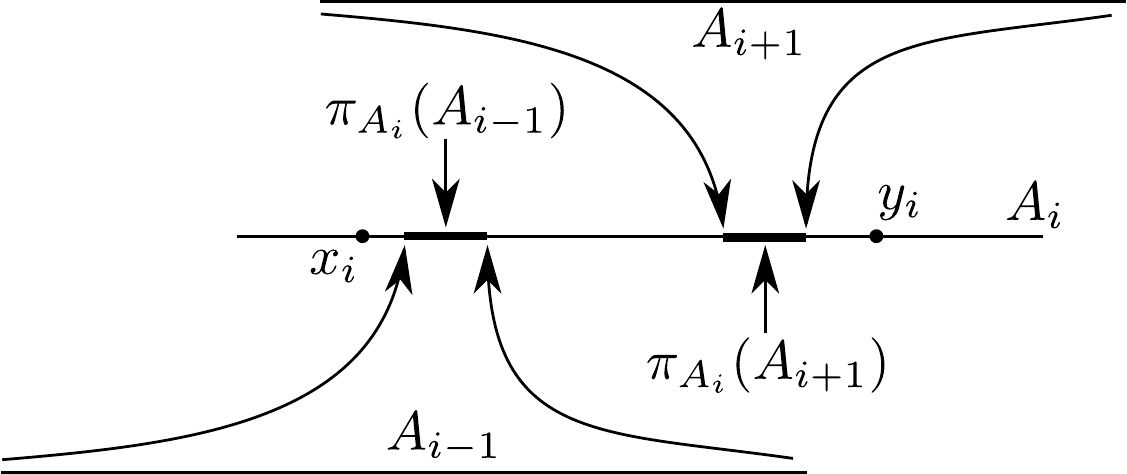}
\caption{The hypotheses of Lemma \ref{metriconcat}}\label{metrconcat}
\end{figure}

At the cost of making the last part of the proof more involved, it is possible to generalise the lemma to finite or one-sided infinite concatenations and substituting (d) with more general conditions (as well as obtaining a more explicit estimate of the contracting constants). We will not need this.

\begin{proof}
Denote $\pi_i=\pi_{A_i}$. We now show that everything coming after $A_i$ in the concatenation projects close to $y_i$, and similarly in the other direction. Formally, setting $d_i=d(x_i,\pi_{i}(A_{i-1})), e_i=d(y_{i},\pi_i(A_{i+1}))$, we would like to show that, for $D$ large enough depending on $C$, we have
\begin{equation}
\pi_i(A_j)\subseteq B(y_i,e_i+C'+C), \pi_j(A_i)\subseteq B(x_j,d_j+C'+C)
\end{equation}
for every $i<j$ and $C'$ as in Lemma \ref{dichot}.
We will show this inductively on $j-i$, the case $j=i+1$ being an easy consequence of (b). For notational convenience, we will show the first containment, the other one can be shown using a symmetric argument.
Suppose that (1) holds whenever $j\leq i+n$ and set $j=i+n+1$ instead. Using the inductive hypothesis we have
$$d(\pi_{i+1}(A_i),\pi_{i+1}(A_j))\geq d(x_{i+1},y_{i+1})-d_{i+1}-e_{i+1}-C'-2C \geq D-C'-2C.$$
For $D$ large enough we can apply Lemma \ref{dichot} and get for each $x\in A_j$
$$d(\pi_{i}(A_{i+1}),\pi_i(x))\leq C',$$
and hence
$$d(y_i,\pi_i(x))\leq d(y_i,\pi_i(A_{i+1}))+diam(\pi_i(A_{i+1}))+C'\leq e_i+C'+C,$$
as required.

The fact that $A$ is a quasi-geodesic now readily follows from the fact that any special path from $x_i$ to $x_j$ has to pass $(d_k+C'+2C)$-close to $x_k$ and $(e_k+C'+2C)$-close to $y_k$ for each $i\leq k \leq j$, if we assume that $D$ satisfies $D-2(C'+2C)\geq C$.

We are only left to show that $A$ is contracting. The idea for defining a contraction is the following. Given $x$, there will be a critical ``time'' $i(x)$ so that the projection of $x$ onto $A_i$ switches from being close to $y_i$ to being close to $x_i$. We define $\pi(x)=x_{i(x)}$. We then notice that, given $x,y$, the projections of $x,y$ are far from each other at times intermediate between the critical ones for $x$ and $y$, and it is not hard to conclude that the projection property holds using this fact.

Let us make this precise. For $x\in X$, define
$$i(x)=\min\{i\in I: d(\pi_i(x),y_i)> e_i+2C'+C\},$$
and set $\pi(x)=x_{i(x)}$. Notice that the set on the right-hand side is non-empty because a special path from $x\in X$ to $A_0$ cannot be $C$-close to $A_i$ for $i$ large enough. In particular, $\pi_{i}(A_0)$ coarsely coincides with $\pi_i(x)$ for such $i$, and $\pi_i(A_0)$ is far from $y_i$. With a similar argument one sees that such set has a lower bound.

The fact that $\pi$ is coarsely the identity map on $\{x_i\}$ easily follows from (1), which implies that $i(x_i)=i$. The fact that it is coarsely the identity on $A$ follows from this and the contraction property shown below.

Consider $x,y\in X$ so that $d(\pi(x),\pi(y))$ is sufficiently large. We have to show that any special path from $x$ to $y$ passes close to $\pi(x),\pi(y)$. Up to exchanging $x$ and $y$, we have $i(y)\geq i(x)+2$. For $i(x)<i<i(y)$, in view of (1) we have
$$d(\pi_{i(x)}(A_i),\pi_{i(x)}(x))\geq d(y_i,\pi_{i(x)}(x))-(e_i+C'+C)>C',$$
so that keeping (c) and Lemma \ref{dichot} into account we have
$$d(x_i,\pi_{i}(x))\leq d(\pi_i(A_{i(x)}),\pi_i(x))+(d_i+C'+C)\leq d_i+2C'+C.$$
On the other hand,
$$d(\pi_{i}(y),y_i)\leq e_i+2C'+C$$
as $i<i(y)$.
For $D$ sufficiently large this implies that any given special path $\gamma$ from $x$ to $y$ passes $(d_i+C'+C)$-close to $\pi_i(x)$ and $(d_i+C'+C)$-close to $\pi_i(y)$. Recall that $i$ was any integer strictly between $i(x)$ and $i(y)$. Let us now choose $i=i(x)+1$. We have
$$d(\gamma,\pi(x))\leq d(\gamma,\pi_i(x))+d(\pi_i(x),x_i)+d(x_i,y_{i(x)})+d(y_{i(x)},x_{i(x)}).$$
We saw already that the first and second term on the right hand side can be uniformly bounded. The remaining two terms are uniformly bounded in view of (d).

Finally, a similar argument with $i=i(y)-1$ provides a bound on $d(\gamma,\pi(y))$.
\end{proof}

The following result is a consequence of Lemma \ref{metriconcat} and gives us a way of constructing new contracting elements from a given one.

\begin{lemma}
\label{hg}
 Let $(X,\calPS)$ be a weak path system on the group $G$. For each $C$ there exists $D$ with the following property. Let $A$ be a $C$-contracting axis for the weakly contracting element $g\in G$, containing $p\in A$. Suppose that $h\in G\backslash E(g)$ is so that
$$d(p,gp)\geq d(\pi_A(hp),p)+d(p,\pi_A(h^{-1}p))+D.$$
Then $hg$, and hence any element conjugate to it, is weakly contracting.
\end{lemma}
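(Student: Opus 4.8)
The plan is to realise $hg$ as the element shifting a bi-infinite concatenation of translates of $A$, and then to invoke Lemma~\ref{metriconcat}. Write $w=hg$ and, for $n\in\Z$, set $A_n=w^nA$, $x_n=w^ng^{-1}p$ and $y_n=w^np$. Since $A$ is $g$-invariant we have $w^{n+1}A=w^nhgA=w^nhA$ and $x_{n+1}=w^{n+1}g^{-1}p=w^nhp\in A_{n+1}$, so $A_n$ is entered (from the arc coming from $A_{n-1}$) at $x_n$ and left (towards $A_{n+1}$) at $y_n$, the connecting arc joining $y_n$ to $x_{n+1}$ having length $d(p,hp)$, independent of $n$. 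The key point is that $\{x_n\}_{n\in\Z}$ is exactly the orbit $\langle w\rangle g^{-1}p$, with $w$ acting on it by the shift $x_n\mapsto x_{n+1}$; hence, once we show this set is a contracting bi-infinite quasi-geodesic, we will have that $w$ has infinite order and possesses a contracting $w$-invariant axis on which it acts coboundedly --- which is one of the two defining requirements for $w$ to be weakly contracting.

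I would then verify hypotheses (a)--(d) of Lemma~\ref{metriconcat} for the sequence $\{x_n,y_n\}$, with contraction constant $\max\{C,K\}$ for a suitable $K$. Conditions (a) and (d) are immediate, since $A_n$ is an isometric translate of $A$ (hence $C$-contracting and containing $x_n,y_n$) and $d(x_n,y_n)=d(g^{-1}p,p)=d(p,gp)$, $d(y_n,x_{n+1})=d(p,hp)$ do not depend on $n$. For (b) and (c) I would use the equivariant projections $\pi_{A_n}:=w^n\circ\pi_A\circ w^{-n}$, taking $\pi_A$ to be $\langle g\rangle$-equivariant (and allowed, exactly as for the equivariant projections in the proof of Theorem~\ref{hypemb}, to take values in uniformly bounded subsets of $A$). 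Then $\pi_{A_n}(A_{n+1})=w^n\pi_A(hA)$, while $g$-equivariance gives $\pi_{A_n}(A_{n-1})=w^n\pi_A(g^{-1}h^{-1}A)=w^ng^{-1}\pi_A(h^{-1}A)$. Since $h,h^{-1}\notin E(g)$, Lemma~\ref{boundorcobound} --- the $K$-dense alternative being ruled out by the very definition of $E(g)$ in Corollary~\ref{e(g)} --- provides a uniform $K$ with $diam(\pi_A(h^{\pm1}A))\le K$, which is (b). Translating by isometries and using $\pi_A(h^{\pm1}p)\in\pi_A(h^{\pm1}A)$, we obtain
\[
d(x_n,\pi_{A_n}(A_{n-1}))+d(y_n,\pi_{A_n}(A_{n+1}))\le d(p,\pi_A(h^{-1}p))+d(\pi_A(hp),p),
\]
so that the hypothesis $d(p,gp)\ge d(\pi_A(hp),p)+d(p,\pi_A(h^{-1}p))+D$ is exactly what is needed for (c), as soon as $D\ge D_0$, where $D_0=D_0(\max\{C,K\})$ is the constant produced by Lemma~\ref{metriconcat}. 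That lemma then yields that $\bar A:=\{x_n\}$ is a contracting bi-infinite quasi-geodesic and, moreover, that any special path between two points $x_i,x_j$ of $\bar A$ has, for each $i<k<j$, a subpath with endpoints $\max\{C,K\}$-close to $A_k$ and at least $D/2$ apart.

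The remaining --- and, I expect, most delicate --- point is to check that $w=hg$ is WPD with respect to the action, since Lemma~\ref{metriconcat} produces only the contracting axis. First, all orbits of $\langle w\rangle$ lie at bounded Hausdorff distance from $\bar A$, as $d(w^ny,w^nx)=d(y,x)$; and $\bar A$ is a Morse quasi-geodesic --- this is the only place where one needs to know that a contracting bi-infinite quasi-geodesic is Morse, which can be deduced from the ``moreover'' clause above together with $d(x_k,y_k)=d(p,gp)$ being bounded (any special path between points of $\bar A$ makes only bounded-size detours away from it), or else quoted from \cite{Si-hypembqconv}. Given this, I would prove the finiteness required by the WPD property in the form of Lemma~\ref{swpd}: fix $R$ and the basepoint $x=g^{-1}p$, so $\langle w\rangle x=\bar A$. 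If $f\in G$ satisfies $d(x,fx)\le R$ and $diam(N_R(f\langle w\rangle x)\cap\langle w\rangle x)$ is large, then, using that $\bar A$ is a uniform Morse quasi-geodesic and that (by the ``moreover'' clause) a special path along $\bar A$ spends a definite amount of time near each $A_k$, a fellow-travelling argument shows that $f$ matches $\bar A$ with itself via a translation (or flip) of bounded amplitude, and that for appropriate indices the element $w^{-k'}fw^k$ --- with $|k-k'|$ bounded in terms of $R$ --- moves a point of $A$ a bounded amount while $N_{R'}(w^{-k'}fw^kA)\cap A$ has large diameter. Since $g$ is WPD, Lemma~\ref{swpd} bounds the number of such elements $w^{-k'}fw^k$, and hence, the index shift being bounded, the number of such $f$. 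The one computation requiring care is this ``phase control'' --- turning the coarse proximity of $f\bar A$ and $\bar A$ over a large set into a bounded bound on the index shift --- for which one uses the uniform quasi-geodesic constants of $\bar A$ and the inequality $d(x,fx)\le R$. Finally, being WPD and admitting a contracting axis are conjugation-invariant, so every conjugate of $hg$ is weakly contracting as well.
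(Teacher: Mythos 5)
Your construction of the contracting quasi-geodesic matches the paper's proof almost exactly, just with a shifted index: the paper takes $x_i=(hg)^ihp$, $y_i=(hg)^{i+1}p$, $A_i=(hg)^ihA$, which is your sequence shifted by one, and it handles the approximate $g$-equivariance of $\pi_A$ by a direct estimate where you instead build $\pi_A$ exactly $\langle g\rangle$-equivariant; the verification of (a)--(d) is the same. That part is fine.

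The gap is in the WPD verification, in the step ``$N_{R'}(w^{-k'}fw^kA)\cap A$ has large diameter, so Lemma~\ref{swpd} for $g$ applies.'' The threshold $L=L(R',x)$ in Lemma~\ref{swpd} is not under your control and can grow without bound as $R'$ (hence $R$) grows, whereas the overlap you can produce between a translate $w^{-k'}fw^kA$ and $A$ is capped: $\bar A$ meets each $A_k$ only along the segment from $x_k$ to $y_k$ of length $d(p,gp)$, and the ``moreover'' clause of Lemma~\ref{metriconcat} only gives a fellow-travelling of diameter comparable to that, a quantity fixed once $g$, $p$, $D$ are chosen. So for $R$ large enough the inequality ``overlap $\geq L(R',x)$'' simply cannot be guaranteed, and the appeal to Lemma~\ref{swpd} does not go through. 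The paper's proof does not conjugate back to $g$ in this way; instead it uses the ``moreover'' clause together with local constancy of projections (Lemma~\ref{loccnst}$(3)$) to see that, for $A_j$ sufficiently far from both endpoints, an $R$-perturbation of the endpoints moves the projection onto $A_j$ by only $C$ (not $O(R)$), and then invokes Lemma~\ref{boundorcobound} --- whose constant $K$ is fixed and independent of $R$ --- to place the potential violator in one of finitely many double cosets $(hg)^jE(g)((hg)^{j'}k_i)^{-1}$, finishing with properness of the $E(g)$-orbit (which follows from $\langle g\rangle$-orbits being quasi-geodesics). You would need to replace your use of Lemma~\ref{swpd} with this pattern --- Lemma~\ref{loccnst}$(3)$ to get a $C$-controlled (not $R$-controlled) overlap of $w^{-j}fw^jA$ with $A$ for a middle $j$, Lemma~\ref{boundorcobound} to force $w^{-j}fw^j\in E(g)$, and then properness of $E(g)$ --- for the WPD check to be correct.
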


It will be convenient to have a slight variation of this lemma, that can be proven in the same way (we leave the details to the reader).

\begin{lemma}
\label{hghg}
 Let $(X,\calPS)$ be a weak path system on the group $G$. For each $C$ there exists $D$ with the following property. Let $A$ be a $C$-contracting axis for the weakly contracting element $g\in G$, containing $p\in A$. Suppose that $h_1,h_2\in G\backslash E(g)$ and $n_1,b_2\in\Z$ are so that
$$d(p,g^{n_1}p)\geq d(\pi_A(h^{-1}_1p),p)+d(p,\pi_A(h_2p))+D,$$
$$d(p,g^{n_2}p)\geq d(\pi_A(h_1p),p)+d(p,\pi_A(h^{-1}_2p))+D,$$
Then $h_1g^{n_1}h_2g^{n_2}$, and hence any element conjugate to it, is weakly contracting.
\end{lemma}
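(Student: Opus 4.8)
The plan is to produce, for $w:=h_1g^{n_1}h_2g^{n_2}$, a bi-infinite $\langle w\rangle$-invariant contracting axis by realizing it as a concatenation of translates of $A$ and invoking Lemma~\ref{metriconcat}, exactly following the scheme of Lemma~\ref{hg}; once such an axis is at hand, the remaining requirements of Definition~\ref{def:wps} — that $w$ has infinite order and is WPD — are handled as there.

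As the sequence of contracting sets I would take the bi-infinite family of translates of $A$
$$\dots,\ C_{2j-1}=w^jA,\ \ C_{2j}=w^jh_1A,\ \ C_{2j+1}=w^{j+1}A,\ \ C_{2j+2}=w^{j+1}h_1A,\ \dots\qquad(j\in\Z),$$
whose consistency, and the fact that $w$ shifts the indexing by $2$, follow from $g^{n_1}A=g^{n_2}A=A$. Each period of the $\langle w\rangle$-action carries two ``long'' blocks: a sub-segment of $C_{2j-1}=w^jA$ of length about $d(p,g^{n_2}p)$ and one of $C_{2j}=w^jh_1A$ of length about $d(p,g^{n_1}p)$; in each case the endpoints $x_m,y_m$ of the block inside $C_m$ are chosen coarsely equal to $\pi_{C_m}(C_{m-1})$ and $\pi_{C_m}(C_{m+1})$. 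Passing from one block to the next is a short jump: in the appropriate $w^j$- or $w^jh_1$-translated frame, the transition $C_{2j}\to C_{2j+1}$ is a conjugate of the jump from $A$ to $h_2A$ and $C_{2j+1}\to C_{2j+2}$ a conjugate of the jump from $A$ to $h_1A$; since $h_1,h_2$ are fixed, these jumps are uniformly bounded.

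I would then verify hypotheses (a)--(d) of Lemma~\ref{metriconcat}, after enlarging $C$ by a controlled amount. Hypothesis (a) is immediate, each $C_m$ being an isometric translate of the $C$-contracting set $A$. Hypothesis (b) holds because $C_m$ and $C_{m\pm1}$ are translates of $A$ related by an element outside $E(g)$, so $diam(\pi_{C_m}(C_{m\pm1}))$ is uniformly bounded by Corollary~\ref{e(g)} together with the coarse Lipschitz property of projections (Lemma~\ref{loccnst}(2)). Hypothesis (d) follows from the boundedness of the jumps and the finiteness of $d(p,g^{n_i}p)$. The crux is hypothesis (c): since $x_m,y_m$ are coarsely the projections $\pi_{C_m}(C_{m\mp1})$, the terms $d(x_m,\pi_{C_m}(C_{m-1}))$ and $d(y_m,\pi_{C_m}(C_{m+1}))$ in (c) drop out, and, computing $d(\pi_{C_m}(C_{m-1}),\pi_{C_m}(C_{m+1}))$ by the (coarse) equivariance of the projections, (c) for the $g^{n_1}$-block becomes precisely the first hypothesis $d(p,g^{n_1}p)\geq d(\pi_A(h^{-1}_1p),p)+d(p,\pi_A(h_2p))+D$ and (c) for the $g^{n_2}$-block becomes precisely the second, with $D=D(C)$ the constant furnished by Lemma~\ref{metriconcat}. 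Hence Lemma~\ref{metriconcat} applies and yields a contracting bi-infinite quasi-geodesic $A'=(m\mapsto x_m)$.

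Finally I would observe that $A'$ is $\langle w\rangle$-invariant (the whole construction is $w$-equivariant with $w$ shifting the indexing by $2$), that $w$ acts coboundedly on $A'$ (every point of $A'$ lies within half a period of the orbit of $p$), that $w$ has infinite order (it translates along the quasi-geodesic $A'$), and that $w$ is WPD: every long sub-segment of a $\langle w\rangle$-orbit quasi-geodesic lies uniformly close to a translate of a long sub-segment of the $g$-axis $A$ (as $A'$ is built out of such blocks and, being contracting, is Morse), so the coarse-stabilizer argument of Lemma~\ref{swpd} reduces WPD for $w$ to WPD for $g$. Therefore $w$ is weakly contracting, and so is every conjugate of $w$, since weak contractivity is conjugation-invariant (conjugate the axis $A'$ by the relevant element of $G$). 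I expect the main obstacle to be exactly the combinatorial bookkeeping hidden in the two middle paragraphs: arranging the identifications $C_m\cong A$ and the choice of block endpoints so that $\pi_{C_m}(C_{m-1})$ and $\pi_{C_m}(C_{m+1})$ land where the two displayed inequalities require. Everything else is a routine transcription of the proof of Lemma~\ref{hg}.
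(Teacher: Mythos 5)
Your proposal is correct and follows essentially the route the paper intends: the paper gives no explicit proof of Lemma~\ref{hghg} (it says ``can be proven in the same way [as Lemma~\ref{hg}]; we leave the details to the reader''), and your argument is a faithful transcription of the proof of Lemma~\ref{hg} with the two-block-per-period family of translates $w^jA,\,w^jh_1A$ in place of the single family $(hg)^ihA$. The sequence $C_m$ is the right one (noting $w^jA=w^{j-1}h_1g^{n_1}h_2A$ since $g^{n_2}A=A$), the verification of (a), (b), (d) is the same, and the WPD argument reduces to the WPD argument in Lemma~\ref{hg} exactly as you indicate.

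One small caveat on the crux step. Your choice of $x_m,y_m$ as points of $\pi_{C_m}(C_{m\mp1})$ departs from what the proof of Lemma~\ref{hg} actually does there: the paper takes $x_i,y_i$ to be translates of $p$ and $gp$, so the right-hand terms in (c) are \emph{not} negligible but coarsely equal to $d(p,\pi_A(h^{-1}p))$ and $d(p,\pi_A(hp))$, and (c) then matches the hypothesis on the nose. With your choice, the right-hand terms in (c) do vanish (up to $C$), but then the quantity one must lower-bound is $d(\pi_{C_m}(C_{m-1}),\pi_{C_m}(C_{m+1}))$, and saying this ``becomes precisely the first hypothesis'' is a bit loose: what one has is a lower bound, by two applications of the reverse triangle inequality together with equivariance,
$$d(\pi_{C_{2j}}(C_{2j-1}),\pi_{C_{2j}}(C_{2j+1}))\ \geq\ d(p,g^{n_1}p)-d(\pi_A(h_1^{-1}p),p)-d(p,\pi_A(h_2p))-O(C),$$
and the displayed hypothesis then gives (c) after absorbing the $O(C)$ into $D$. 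Both choices of $x_m,y_m$ work; the paper's is slightly cleaner for (d) (the jumps are exactly $d(p,h_1p)$, $d(p,h_2p)$) while yours is cleaner for (c). This is a wording issue, not a gap; the underlying argument is sound.
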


\emph{Proof of Lemma \ref{hg}.}
 Let $\pi_A$ be a $C$-contraction on $A$.
We apply Lemma \ref{metriconcat} with $x_i=(hg)^ihp$, $y_i=(hg)^{i+1}p$ and $A_i=(hg)^ihA$. We claim that all conditions in Lemma \ref{metriconcat} are satisfied, up to suitably increasing the constants. Denote $h_i=(hg)^ih$ for convenience. Set $K=\sup\{b\notin E(g):diam(\pi_A(bA))\}<+\infty$. Condition (d) clearly holds as $d(x_i,y_i)=d(p,gp)$ and $d(y_i,x_{i+1})=d(p,hp)$. Condition (a) is clear setting $\pi_{A_i}(x)=h_i\pi_A(h_i^{-1}x)$. Condition (b) holds for with constant $K$ instead of $C$. In order to show (c), first notice that there is a uniform bound on $d(\pi_A(gx),g\pi_A(x))$ for $x\in X$, as $\pi_A(y)$ coarsely coincides with the first point on any special path from $y$ to $A$ in a suitable neighbourhood of $A$. We can then make the computation:
$$\pi_{A_i}(A_{i+1})=h_i \pi_A(ghA)\subseteq h_i B_{K}(\pi_A(ghp))\subseteq (hg)^{i+1} B_{K'}(\pi_A(hp)),$$
and similarly we get $\pi_{A_i}(A_{i-1})\subseteq h_i B_{K'}(\pi_A(h^{-1}p))$. So, using that $G$ acts by isometries:
$$d(x_i,\pi_{A_i}(A_{i-1}))+d(y_i,\pi_{A_i}(A_{i+1}))\leq $$
$$d(p,\pi_A(h^{-1}p))+d(p,\pi_A(hp))+2K'\leq d(p,gp)-D+2K'\leq d(x_i,y_i)-D+2K'.$$

Now, we are only left to check the WPD property. For a suitable $B$, there are finitely many representatives $k_1,\dots, k_n$ of left cosets of $E(g)$ so that a special path $\alpha$ from $x_0$ to some $(hg)^ix_0$ intersects $N_C(x A)$ in a set of diameter at least $B$ (henceforth, fellow-travels for a long time) only if $x=(hg)^jk_i$ for some $j,i$.

To see this, we can first reduce to showing the analogous property for special paths $\alpha$ connecting a point $C$-close to $x_0$ to a point $C$-close to $hgx_0$ using the ``moreover'' part of Lemma \ref{metriconcat} and using the action of the appropriate $(hg)^jx_0$. Such special path will fellow-travel a translate of $A$ only if the projections of $x_0$, $hgx_0$ on such translate are far enough, and there are only finitely many such translates $xA$ as any special path connecting $x_0$ to $hgx_0$ contains long disjoint subpaths each fellow-travelling some $xA$.

Suppose that $k\in G$ is so that $d(kx_0,x_0),d(k(hg)^Nx_0,(hg)^Nx_0)\leq K$, for $N$ large compared to $K$, and let $\gamma$ be a special path from $x_0$ to $(hg)^Nx_0$. Then both $k\gamma$ and $\gamma$ fellow-travel some $A_j$, and $|j|$ bounded in terms of $K$ if we pick the smallest such $j$. This is because the projection of the endpoints of $k\gamma$ on $A_j$ are close to those of the endpoints of $\gamma$ whenever $A_j$ has distance at least $K+C$ from the endpoints of $\gamma$. 

Hence, $k$ has the form $(hg)^j E(g)((hg)^{j'}k_i)^{-1}$, because $kA'$ only fellow-travels for a long time translates of $A$ of the form $k(hg)^{j'}k_iA$. Also, $|j'|$ can again be bounded in terms of $K$.

\begin{figure}[h]
\centering
 \includegraphics[scale=0.8]{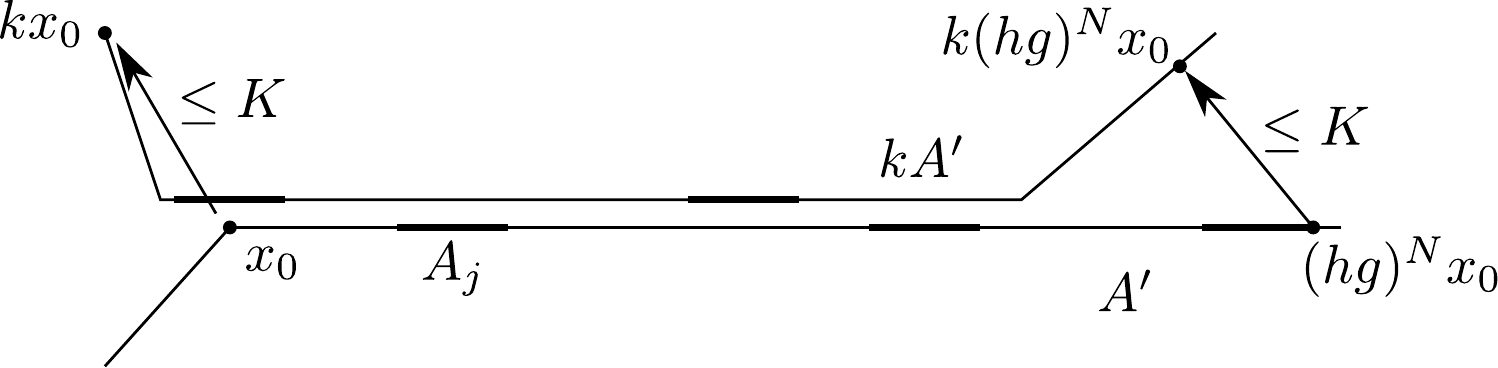}
\caption{$A'$, $kA'$ both intersect far away $K$-balls and fellow-travel in between.}
\end{figure}

Given any $a,b$, there are only finitely elements $g'$ of $E(g)$ so that $d(ag'b,x_0)\leq K$, so that we showed that $k$ lies in one of finitely many double cosets of $E(g)$ and in each of them only finitely many elements satisfy a property satisfied by $k$. Hence, there are finitely many choices of $k$ and we are done.\qedhere

\section{Random walks}
\label{mainproof}

Recall that our aim is to prove the following.

\begin{thm}\label{randomwalks}
  Let $G$ be a non-elementary group supporting the path system (resp. weak path system) $(X,\calPS)$ and containing a contracting (resp. weakly contracting) element. Then the probability that a simple random walk $\{X_n\}$ supported on $G$ gives rise to a non-contracting (resp. non-weakly-contracting) element decays exponentially in the length of the random walk.
\end{thm}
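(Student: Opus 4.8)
The plan is to reduce the statement to a statement about random walks on the subgroup $H$ (here $H=G$) and then exploit the key technical input, Lemma \ref{hg} (or its variant Lemma \ref{hghg}), which lets us manufacture a weakly contracting element out of a single one provided a pair of projection inequalities is satisfied. Concretely, fix a weakly contracting $g\in G$ with $C$-contracting axis $A$ and basepoint $p$, and let $D=D(C)$ be as in Lemma \ref{hghg}. Then any product of the form $w_1 g^{n} w_2 g^{n}$ with $w_1,w_2\notin E(g)$ and $n$ large enough (large compared to $D$ and to the projection quantities $d(\pi_A(w_i^{\pm1}p),p)$) is weakly contracting. The strategy is to show that a random word of length $m$, with overwhelming probability, can be written in the form $w_1 g^{n} w_2 g^{n}$ with the required properties, for a \emph{fixed} $n$, by conditioning on a suitable sub-window of the random walk.

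First I would replace the generating set $S$ of $G$ by $S\cup\{g^{\pm1}\}$; this only changes the random walk by a bounded ``speed-up'' factor and does not affect exponential decay, so we may assume $g\in S$. The heart of the matter is then a local move: among any $N$ consecutive steps of the random walk (for a fixed large $N$ depending on $C$, $D$, and the action), there is a positive-probability pattern, namely the concatenation $s\, g^{n}$ or $g^{n}\, s$ for a suitably chosen fixed block, that forces the bookkeeping in Lemma \ref{hghg} to go through once it occurs twice, at two well-separated times, in the word. The point is that the projections $\pi_A(w_i^{\pm 1}p)$ depend only on the portions of the word \emph{outside} the two inserted $g^{n}$-blocks together with the group elements those blocks are conjugated by; but by cutting the word at the two occurrences and reading off the middle, one arranges that the relevant projection distances are controlled by a bounded amount (the inserted blocks ``shield'' the rest), while the exponent is the fixed large $n$. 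So the event ``$X_m$ is weakly contracting'' contains the event ``the random word contains two disjoint occurrences of the prescribed pattern at the right relative positions, and the letters adjacent to those occurrences land outside $E(g)$''.

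Next I would estimate the probability of the complementary (bad) event. Partition $\{1,\dots,m\}$ into $\lfloor m/N\rfloor$ consecutive blocks of length $N$. In each block the prescribed pattern appears with some fixed probability $\epsilon=\epsilon(N,S)>0$, and these events are independent across blocks. Hence the probability that the pattern fails to appear in at least two (well-separated) blocks is at most $(1-\epsilon)^{\lfloor m/N\rfloor-1}+O(\cdot)$, which decays exponentially in $m$. One must also rule out the small-probability events that the adjacent letters happen to lie in $E(g)$ (a proper, in fact infinite-index-complement situation: since $G$ is non-elementary and $E(g)$ is virtually cyclic, $E(g)\neq G$, and by Kesten's theorem the random walk on $G$ is not supported on a single coset of $E(g)$ — more precisely the probability of staying in $E(g)$ decays exponentially); this is exactly where the hypothesis that $G$ is non-elementary, via Kesten, enters, and where the phrase ``modulo a classical result of Kesten'' in the introduction is used. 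Finally, the argument for the \emph{strict} (non-weak) path system case is identical, invoking the contracting — rather than weakly contracting — conclusion in the appropriate versions of Lemmas \ref{metriconcat} and \ref{hg}.

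The main obstacle I anticipate is the ``shielding'' bookkeeping: one has to verify carefully that inserting two copies of $g^{n}$ at controlled positions really does bound the projection quantities $d(\pi_A(h_i^{\pm1}p),p)$ appearing in Lemma \ref{hghg} by constants independent of $m$ and of the rest of the random word. This requires using coarse monotonicity of projections (the ``Projections are coarsely monotone'' lemma) and the cobounded action of $g$ on $A$ to see that, once a block $g^{n}p,\dots$ is long enough, the projection of everything lying ``behind'' it collapses to a bounded set near an endpoint — essentially the same computation as in the proof of Lemma \ref{hg}. A secondary technical point is handling the two well-separated occurrences so that the four conjugating elements $h_1,h_2$ (and exponents) are genuinely as in Lemma \ref{hghg}; this is a matter of choosing the block positions to lie, say, in the first and last thirds of the word, and absorbing the leftover prefix/suffix into $h_1,h_2$. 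Neither of these is conceptually hard, but they are the places where care is needed.
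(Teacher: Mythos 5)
Your high-level plan — combine Lemma \ref{hghg}, a Kesten-type estimate, and a search for two well-separated suitable positions in the random word — matches the paper's strategy, and you correctly flagged the ``shielding'' step as the obstacle. However, two of your steps are genuine gaps, and they happen to be exactly where the paper's actual work lies.

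First, you cannot ``replace the generating set $S$ by $S\cup\{g^{\pm1}\}$.'' The theorem asserts exponential decay for the simple random walk attached to a given generating set; changing $S$ changes the measure $\mu_n$ and hence the statement being proved. There is no general comparison principle that transfers an exponential estimate from $\mu'^{*n}$ (the walk for the larger generating set) back to $\mu^{*n}$ for a fixed target set. The paper circumvents this entirely by never assuming $g\in S$: it fixes a word $w_0\in W_m(S)$ representing $g$, and Lemma \ref{manywpd} shows that, with exponentially high probability, linearly many disjoint length-$l_k$ windows of a random word are concatenations of $w_0^{\pm1}$. That lemma is what lets you find honest $g^{n_i}$-blocks inside a random word drawn from the original $S$.

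Second, and more seriously, the ``shielding'' mechanism does not exist. To invoke Lemma \ref{hghg} on a cyclic permutation $h_1 g^{n_1}h_2 g^{n_2}$ of the random word you must bound all four projection quantities $d(\pi_A(h_i^{\pm1}p),p)$ from above by something smaller than $d(p,g^{n_j}p)-D$. But $h_1$ and $h_2$ are the full subwords \emph{between} the two $g^n$-blocks; their lengths grow linearly with $n$, and inserting a $g^n$-block at their boundary gives no a priori control on how far $h_i^{\pm1}p$ projects onto $A$. The coarse monotonicity / cobounded-action facts you invoke only control how translates of $A$ project onto each other and how special paths enter neighbourhoods of $A$; they do not collapse $\pi_A$ of an arbitrary random element to a bounded set. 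Indeed, $d(\pi_A(X_np),p)$ is typically unbounded in $n$; what is true — and this is the crux — is that it has an exponential tail. That is precisely the content of Lemma \ref{rndprojsmall}, proved by a swapping argument ($w\mapsto w'$ of the same length, same $\calI^k$ structure, smaller projection) that has no analogue in a shielding heuristic. Even granting a tail bound, a single position with small projection is not enough: one needs a pair $(\hat i_1,\hat i_2)$ for which \emph{all four} forward and backward projections are simultaneously small, and these events are heavily dependent since they refer to overlapping subwords of the same random word. The paper handles that dependence with the good-pair combinatorics of Lemma \ref{45} and Proposition \ref{jpmneqempty} (the claim inside Lemma \ref{45} is a conditional estimate, and the $\frac{4}{5}$-counting argument is what extracts a well-separated good pair). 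Your proposal replaces this entire mechanism with the assertion that it ``is not conceptually hard,'' but no argument is offered, and the naïve independence-across-blocks estimate you sketch computes the probability of finding the $g^n$-pattern, not of the projections being small. Those are the two ingredients you would have to supply to turn the sketch into a proof.
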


The reader may wish, on first reading, to take the short-cut suggested in Subsection \ref{shortcut}, where we indicate how to prove the slightly weaker result that the probability of ending up in a non-contracting element goes to $0$ without the extra information that it does so exponentially fast.

\subsection{Notation}
We fix the notation of the theorem throughout this section. Additionally, we let $W_n(S)$ be the set of words of length $n$ in the generating system of $G$ that defines the random walk $\{X_n\}$. For $w$ a word in $S$, we denote by $g(w)$ the corresponding element of $g$. Finally, $p$ is a fixed basepoint of $X$.
To simplify the notation, for any contracting set $A$,
$$d_A(\cdot,\cdot)=d(\pi_{A}(\cdot),\pi_{A}(\cdot)).$$

\subsection{Many contracting subwords}
In order to keep the proof as elementary and self-contained as possible, rather than using more refined estimates on sums of independent random variables we will only use the following well-known inequality: for each positive integers $n\geq m$, we have
 
 \begin{equation}
 \label{binom}
  \binom{n}{m}\leq \left(\frac{ne}{m}\right)^m.
 \end{equation}

Let $w_0\in W_m(S)$ be a word of length $m$ (we will be interested in a word representing a weakly contracting element).
For each $k$ denote $W_{w_0}^k$ the set of words obtained concatenating $k$ words of the form $w_0$ or $w^{-1}_0$. For a word $w$, denote by $w[j,k]$ the subword starting with the $j$-th letter and ending with the $(k-1)$-th letter. Let $l_k=km$ and denote $\calI_{w_0}^k(w)=\{i\in 2\N:w[il_k,(i+1)l_k]\in W_{w_0}^k\}$. In words, we split $w$ into subwords of length $l_k$, keep the even ones and count how many of those are concatenations of $w_0$ and $w_0^{-1}$.

The following lemma tells us that we expect the size of $\calI_{w_0}^k(w)$ to be linear in the length $n$.

\begin{lemma}
\label{manywpd}
For each $w_0\in W_m(S)$ and $k\geq 1$ there exists $C_0\geq 1$ so that
$$\matP[\#\calI_{w_0}^k(X_n)\leq n/C_0]\leq C_0e^{-n/C_0}$$
for each $n\geq 1$.
\end{lemma}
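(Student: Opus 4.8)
\textbf{Proof plan for Lemma \ref{manywpd}.}

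The plan is to reduce the statement to a concentration estimate for a sum of independent "success" indicators and then apply the elementary bound \eqref{binom}. Fix $w_0 \in W_m(S)$ and $k \geq 1$, and set $\ell_k = km$. Break a word $w$ of length $n$ into the consecutive blocks $B_i = w[i\ell_k, (i+1)\ell_k]$ of length $\ell_k$, and retain the even-indexed ones; there are on the order of $n/(2\ell_k)$ of them. The key point is that, under the random walk, the blocks $B_i$ are \emph{independent} of one another (each is determined by a disjoint set of the i.i.d.\ increments of the walk), and each even block $B_i$ has a fixed positive probability $q = q(w_0,k) := |W_{w_0}^k|/|W_{\ell_k}(S)| > 0$ of lying in $W_{w_0}^k$ (here one uses that $W_{w_0}^k$ is a nonempty set of words of length $\ell_k$, so $q \geq |W_{\ell_k}(S)|^{-1} > 0$). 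Thus $\#\calI_{w_0}^k(X_n)$ stochastically dominates a Binomial$(N,q)$ variable with $N = \lfloor n/(2\ell_k)\rfloor$.

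Next I would establish the large-deviation tail. Let $t = qN/2$, say; then $\matP[\#\calI_{w_0}^k(X_n) \leq t]$ is at most the probability that fewer than $t$ of the $N$ independent trials succeed, which is bounded by a sum of at most $N$ binomial terms, each of the form $\binom{N}{j}q^j(1-q)^{N-j}$ with $j \leq t$. Using \eqref{binom} in the form $\binom{N}{j} \leq (Ne/j)^j$ and the fact that $(1-q)^{N-j} \leq (1-q)^{N/2}$ for $j \leq N/2$, each term — and hence the whole sum, after absorbing the factor $N$ and a harmless polynomial — is bounded by $C' e^{-n/C'}$ for a constant $C' = C'(w_0,k)$. (The exponential rate survives because $(1-q)^{N/2}$ already decays like $e^{-cn}$ for $c = -\tfrac{1}{4\ell_k}\log(1-q) > 0$, and the $(Ne/j)^j$ factor is at most sub-exponential in $n$ for $j \leq qN/2$ with $q$ fixed.) Choosing $C_0$ large enough that simultaneously $C_0 \geq 1/t$ (so that $n/C_0 \leq t$ for all $n$, after possibly enlarging $C_0$ to handle small $n$), that the rate above is at least $1/C_0$, and that the leading constant is at most $C_0$, gives the claimed inequality $\matP[\#\calI_{w_0}^k(X_n) \leq n/C_0] \leq C_0 e^{-n/C_0}$ for all $n \geq 1$.

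The only genuinely delicate points are bookkeeping ones: first, that the even blocks are honestly independent with the stated per-block success probability — this is immediate from the definition of the simple random walk as a product of i.i.d.\ uniform increments on $S \cup S^{-1}$, since distinct blocks use disjoint coordinate windows; and second, that the edge cases (small $n$, and the rounding in $N = \lfloor n/(2\ell_k)\rfloor$ and in the threshold) do not spoil the clean form of the bound, which is handled simply by taking $C_0$ large. I expect no real obstacle here: this is a standard Chernoff-type estimate, and the lemma is deliberately phrased with an unspecified constant $C_0$ precisely so that one need not optimize. The main conceptual content — that a positive-density set of disjoint windows is eligible and that eligibility happens independently with fixed probability — is exactly what makes $\#\calI_{w_0}^k(X_n)$ grow linearly with overwhelming probability.
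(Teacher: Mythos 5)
Your proof is correct and follows essentially the same route as the paper: both exploit the independence of the even-indexed length-$l_k$ blocks, each landing in $W_{w_0}^k$ with a fixed positive probability, and then apply the crude binomial-coefficient bound \eqref{binom} to obtain exponential decay in $n$. The paper packages the estimate as a union bound over candidate index sets $I$ of size $\lfloor n_0/K\rfloor$ with $K$ chosen large enough that $(2Ke)^{1/K}\rho^{1-1/K}<1$, while you sum exact binomial tail terms against a threshold at half the mean; these are cosmetically different but equivalent instances of the same elementary Chernoff-type argument.
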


\begin{proof}
For notational convenience, we drop the ``$w_0$'' subscripts. Let $K$ be a large enough constant to be determined by the following argument.
Let $w$ be a word of length $n$ and set $n_0=\lfloor n/(2l_k)\rfloor$. We can assume that $n$ is large enough that $\lfloor n/K\rfloor\geq n_0/(2K)$.

If $\#\calI^k(w)\leq n_0/K$ then $\calI^k(w)$ is contained in a set $I\subseteq \{0,2,\dots,2n_0\}$ of cardinality $\lfloor n_0/K\rfloor$, and for each $i\in \{0,\dots,2n_0\}\backslash I$ we have that $w[il_k,(i+1)l_k]$ (is defined and) does not belong to $W^k$. The probability of a word of length $l_k$ not belonging to $W^k$ is some $\rho<1$ (depending on $k$ and the length of $w_0$).
We now use inequality \ref{binom}. Summing over all possible sets $I$ we can bound the probability in the statement by
$$\binom{n_0}{\lfloor n_0/K\rfloor}\rho^{n_0-\lfloor n_0/K\rfloor}\leq (2Ke)^{n_0/K}\rho^{(1-1/K)n_0}=\big((2Ke)^{1/K}\rho^{1-1/K}\big)^{n_0}.$$
If $K$ is large enough, depending on $\rho$ only, we have $(2Ke)^{1/K}\rho^{1-1/K}<1$ (because the first term tends to $1$ and the second one tends to $\rho$ for $K\to\infty$). This gives us the exponential decay we were looking for.
\end{proof}

\subsection{Avoiding $E(g)$}

\begin{lemma}
\label{avoideg}
For any weakly contracting element $g$ of $G$ there exists $C_1\geq 1$ so that
 $$\matP[X_n\in E(g)]\leq C_1e^{-n/C_1}.$$
\end{lemma}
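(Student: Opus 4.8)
The statement is that a simple random walk enters the virtually cyclic subgroup $E(g)$ with exponentially small probability. The natural strategy is to exhibit a non-elementary group that is ``transverse'' to $E(g)$ and to use the fact that $E(g)$ has infinite index in $G$ (indeed, $G$ is non-elementary, so $G$ cannot be contained in a virtually cyclic subgroup) together with a return-probability estimate for random walks on the Schreier (coset) graph $E(g)\backslash G$. Concretely, $\matP[X_n\in E(g)]$ is exactly the probability that the simple random walk on the Cayley graph of $G$, projected to the Schreier graph $E(g)\backslash G$, returns to the coset of the identity at time $n$. Since $G$ is non-elementary and $E(g)$ is virtually cyclic (hence amenable), the quotient action of $G$ on $\ell^2(E(g)\backslash G)$ has no invariant vectors and no almost-invariant vectors — this is where a classical theorem of Kesten (as promised in the introduction) comes in: the spectral radius of the simple random walk operator on $\ell^2(E(g)\backslash G)$ is strictly less than $1$ precisely because $E(g)$ is not co-amenable in $G$ (equivalently, because the Schreier graph is non-amenable, which in turn follows from $G$ being non-amenable — $G$ contains a (weakly) contracting element and is non-elementary, so by the Tits-alternative-type statement discussed in the Appendix, $G$ contains a free subgroup and is non-amenable — OR more directly because a virtually cyclic subgroup of a non-elementary $G$ is never co-amenable).

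\textbf{Key steps, in order.} First I would reduce the problem to a statement about the random walk on the coset space: writing $\bar\mu_n$ for the law of the image of $X_n$ in $E(g)\backslash G$ (the uniform measure on endpoints of length-$n$ words, pushed to cosets), we have $\matP[X_n\in E(g)] = \bar\mu_n(\{E(g)\})$, the return probability. Second, I would note that the simple random walk here is driven by the symmetric measure uniform on $S\cup S^{-1}$ on $\langle S\rangle = G$, so the relevant operator $M$ on $\ell^2(E(g)\backslash G)$ is self-adjoint, and $\bar\mu_{2n}(\{E(g)\}) = \langle M^{2n}\delta_{E(g)},\delta_{E(g)}\rangle \le \|M\|^{2n} = \lambda^{2n}$ where $\lambda$ is the spectral radius. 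Third — the crux — I would invoke Kesten's criterion: $\lambda < 1$ if and only if the action of $G$ on $E(g)\backslash G$ is not amenable (equivalently $E(g)$ is not co-amenable in $G$). Since $G$ is non-elementary and $E(g)$ is virtually cyclic, $E(g)$ has infinite index and, being amenable, cannot be co-amenable in the non-amenable group $G$; this gives $\lambda<1$. (One should be slightly careful: if $G$ itself were amenable the argument fails, but a non-elementary group carrying a weakly contracting element is non-amenable, since by Theorem \ref{hypemb} it contains a non-degenerate hyperbolically embedded subgroup, or directly since it contains a free subgroup — see the Appendix.) Finally, I would handle the parity issue: $\bar\mu_{2n+1}(\{E(g)\}) \le \bar\mu_{2n}(\{E(g)\})^{1/2}\,\bar\mu_{2n+2}(\{E(g)\})^{1/2}$ by Cauchy–Schwarz applied to $M^{2n+1} = M^n M M^n$ (or simply bound $\bar\mu_{2n+1}$ by a constant times $\bar\mu_{2n}$ using that one step changes the measure by a bounded factor), so the whole sequence decays at rate $\lambda$, and one absorbs constants to get the form $C_1 e^{-n/C_1}$ with $C_1 = \max\{1, 1/|\log\lambda|, \text{const}\}$ suitably large.

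\textbf{Main obstacle.} The genuinely substantive point is establishing $\lambda < 1$, i.e.\ non-amenability of the Schreier graph $E(g)\backslash G$. Two things feed into it: that $G$ is non-amenable, and that $E(g)$ — being virtually cyclic, hence amenable — is not co-amenable in $G$. The cleanest route is: (i) $G$ non-elementary with a weakly contracting element $\Rightarrow$ $G$ contains a non-abelian free subgroup (the Weak Tits Alternative of the Appendix, or alternatively ping-pong using Lemma \ref{hg} / Lemma \ref{hghg} to produce two weakly contracting elements with transverse axes), hence $G$ is non-amenable; (ii) a standard fact that an amenable subgroup of a non-amenable group is never co-amenable (if $E(g)$ were co-amenable, then $G$, being an extension-like amalgam of the amenable $E(g)$ by an amenable co-amenable quotient-action, would be amenable — more precisely, co-amenability of $E(g)$ plus amenability of $E(g)$ forces amenability of $G$). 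Everything else is bookkeeping: the translation from $\matP[X_n\in E(g)]$ to a return probability, the spectral-radius bound, and the parity/constant adjustments. I would present the argument by first recording the reduction to return probabilities, then stating and citing Kesten's theorem in the coset-graph form, then dispatching non-co-amenability in one short paragraph, and closing with the parity step.
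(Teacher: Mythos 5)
Your argument is correct, but it routes through a more sophisticated form of Kesten's theorem than the paper actually uses. You reduce $\matP[X_n\in E(g)]$ to a return probability on the Schreier coset graph $E(g)\backslash G$ and then invoke the Kesten-type equivalence between $\|M\|<1$ and non-co-amenability of $E(g)$, establishing the latter by noting that an amenable subgroup of a non-amenable group is never co-amenable. The paper instead uses only the most elementary consequence of Kesten: since the group driving the walk is non-amenable (by the Appendix's Weak Tits Alternative, exactly as you say), the uniform bound $\matP[X_n=h]\leq Ke^{-n/K}$ holds for \emph{every} $h$ (the off-diagonal transition probability is controlled by the spectral radius via Cauchy--Schwarz). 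It then observes that $E(g)$ is quasi-isometrically embedded (its orbit is a quasi-geodesic), so the ball of radius $n$ in the Cayley graph contains at most $Kn$ elements of $E(g)$, and a union bound gives $\matP[X_n\in E(g)]\leq (Kn)(Ke^{-n/K})$, still exponentially small. The two arguments are both valid; the paper's is shorter, stays entirely inside the group $G$ (avoiding the coset-graph machinery and the co-amenability discussion), and uses the quasi-geodesicity of $E(g)$ — a geometric input you did not need. Your approach has the virtue of not needing the polynomial growth bound on $E(g)$, and would work verbatim for any amenable (not necessarily quasi-isometrically embedded) subgroup, but it carries the extra bookkeeping you acknowledge (parity, co-amenability of coset spaces). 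One small point to watch in your version: if the walk is supported on a proper subgroup $H<G$, the relevant coset space is $(E(g)\cap H)\backslash H$ rather than $E(g)\backslash G$; this does not hurt the argument since $E(g)\cap H$ is still virtually cyclic and $H$ is still non-amenable, but it should be stated.
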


\begin{proof}
As we point out in the Appendix, since $H$ is non-elementary, it must contain a free group on two generators and so it is not amenable. Hence, by a result of Kesten, see e.g. \cite[Corollary 12.12]{Wo}, there exists $K\geq 1$ so that for each $g\in G$ and $n\geq 1$ we have 
 $$\matP[X_n=g]\leq Ke^{-n/K}.$$
Also, up to increasing $K$, in the ball of radius $n$ \emph{in the Cayley graph} of $G$ around $1$ there are at most $Kn$ elements of $E(g)$, because the inclusion of $E(g)$ in $G$ is a quasi-isometric embedding (if it was not, the orbit of $g$ could not be a quasi-geodesic in $X$). Hence,
$$\matP[X_n\in E(g)]\leq (Kn)(Ke^{-n/K}),$$
which decays exponentially.
\end{proof}

\subsection{One small projection}
In this section we show that we expect the projection of a random point on an axis of a fixed weakly contracting element to be close to $p$, the basepoint of $X$. This is the key fact we need to use Lemma \ref{hg} or \ref{hghg}, our methods for constructing weakly contracting elements. We use the standard notation $\matP[\cdot|\cdot]$ for conditional probabilities.

\begin{lemma}
\label{rndprojsmall}
Let $w_0$ be a word of even length representing a weakly contracting element $g\in G$, with axis $A$. 
For every $k$ large enough depending on $w_0$ the following holds. There exists $C_2\geq 1$ so that for each $j\geq 1$, $t\geq 0$ and $I\subseteq 2\N$ we have 
$$\matP\left[d(p,\pi_A(X_np))\geq t|\calI^k_{w_0^j}(X_n)=I\right]\leq C_2e^{-t/C_2}.$$
\end{lemma}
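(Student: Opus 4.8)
The statement controls the projection to $A$ of the endpoint of the random walk, conditioned on which blocks are powers of $w_0^j$. The key geometric input is that along the path traced by the random walk in $X$, whenever a block $w[il_k,(i+1)l_k]$ is of the form $(w_0^j)^{\pm l_k/\text{length}}$, the corresponding subpath is a long fellow-traveler of a translate $g_i A$ of the axis $A$ (here $g_i$ is the group element represented by the prefix $w[0,il_k]$), and this translate is far from $A$ with good probability (Lemma \ref{avoideg} says $X_{il_k}\in E(g)$ only with exponentially small probability, and more generally one controls $d(g_iA, A)$). So the plan is: first establish that, conditioned on $\calI^k_{w_0^j}(X_n)=I$, the projection $\pi_A(X_n p)$ can only be large if the \emph{first} translate $g_{i_0}A$ (for $i_0$ the smallest index in $I$, or more carefully the first block whose translate of $A$ is close to $A$) is actually close to $A$ — i.e. the walk must ``start near $A$'' before it can drift its projection far; by coarse monotonicity of projections (the ``Projections are coarsely monotone'' lemma) and the contracting property, once the walk moves onto a far translate $g_iA$, its projection to $A$ stabilizes.

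More precisely, I would argue as follows. Condition on $I = \calI^k_{w_0^j}(X_n)$ and let $i_1 < i_2 < \dots$ be the elements of $I$. On the event that $d(p, g_{i_1}A) \ge K'$ for the constant $K'$ of Lemma \ref{boundorcobound}, the projection $\pi_A(g_{i_1}A)$ has diameter $\le K$, and since the subpath over block $i_1$ fellow-travels $g_{i_1}A$ for a definite length (because that block is a power of $w_0^j$ and $k$ is large), the contracting property forces $\pi_A(X_n p)$ to lie within a bounded distance of $\pi_A(g_{i_1}A p)$ — hence within a bounded distance of $\pi_A$ of the walk \emph{at time} $i_1 l_k$. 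Iterating, or rather just using the single first such block, reduces the problem to bounding $\matP[d(p,\pi_A(X_{i_1 l_k} p)) \ge t - O(1)]$, i.e. to controlling the projection of the walk \emph{up to the first conditioned block}. But the walk up to time $i_1 l_k$ is itself a random walk of length $i_1 l_k$, and — crucially — $i_1$ is itself random but its distribution has exponential tails: $i_1 \ge s$ forces the first $s$ blocks to avoid $W^k_{w_0^j}$, an event of probability $\le \rho^s$ for some $\rho < 1$ (as in Lemma \ref{manywpd}). Combining the Kesten-type bound $\matP[X_m = g] \le K e^{-m/K}$ with the fact that there are at most polynomially many group elements at distance $\le t$ from $\langle g\rangle$ in the Cayley graph (hence projecting within distance $t$ of $p$, using that $A$ is quasi-isometric to $\langle g\rangle$ and the projection is coarsely Lipschitz), one gets that $\matP[d(p,\pi_A(X_m p)) \ge t] \le \text{poly}(t) \cdot (\#\{\text{relevant elements}\}) \cdot e^{-m/K}$, which is small when $m$ is large relative to $t$; summing over the exponentially-decaying distribution of $m = i_1 l_k$ yields the claimed $C_2 e^{-t/C_2}$ bound.

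The main obstacle, and the point requiring the most care, is the reduction step: making precise that conditioning on $\calI^k_{w_0^j}(X_n) = I$ does not destroy the independence/Markov structure one needs, since the conditioning is a global event about the whole word, not a prefix event. I would handle this by conditioning further on the entire word \emph{outside} some initial segment, or by the following trick: the event $d(p,\pi_A(X_n p)) \ge t$ together with $\calI = I$ and the geometry above implies a prefix event (the walk restricted to $[0, i_1 l_k]$ has large projection, AND block $i_1$ is a $w_0^j$-power, AND blocks before $i_1$ are not), and one bounds the probability of that prefix event directly — the suffix of the word is then free and contributes a factor $\le 1$ after summing. One must also deal with the degenerate case where $A$ \emph{is} close to all the $g_iA$ (i.e. the walk stays near $A$): here $X_n$ itself lies in a bounded neighborhood of $\langle g\rangle$ in the Cayley graph only with exponentially small probability once $n$ is large compared to $t$, so the same Kesten-plus-counting estimate applies. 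A final technical point is choosing $k$ large enough (depending on $w_0$ and the contracting and quasi-geodesic constants) so that a block which is a $w_0^j$-power genuinely fellow-travels the corresponding translate of $A$ for a length exceeding the contracting constant $C$; this is exactly the kind of ``$k$ large'' hypothesis already present in Lemma \ref{hg} and Lemma \ref{metriconcat}.
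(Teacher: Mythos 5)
The central probabilistic step of your plan — bounding $\matP[d(p,\pi_A(X_m p))\ge t]$ by combining Kesten's theorem with the claim that ``there are at most polynomially many group elements at distance $\le t$ from $\langle g\rangle$'' — does not hold up. Since $G$ is non-elementary and contains a weakly contracting element it is non-amenable, so already a single ball of radius $t$ in the Cayley graph contains exponentially many elements, and the $t$-neighbourhood of $\langle g\rangle$ contains infinitely many; the count is nowhere near polynomial. Worse, the implication in the parenthetical runs backwards: the event $d(p,\pi_A(X_m p))\ge t$ does not put $X_m$ near $\langle g\rangle$ in the Cayley graph (a typical $X_m p$ for $m\gg t$ lies far from $A$ and merely \emph{projects} far out), so you cannot reduce to counting elements near $\langle g\rangle$. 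The resulting estimate $\matP[d(p,\pi_A(X_m p))\ge t]\le \mathrm{poly}(t)\cdot e^{-m/K}$ therefore does not follow, and since this is exactly the statement that the hitting measure on the ``end'' of $A$ has exponential tails, it is the heart of the lemma and cannot be read off from Kesten. There is also a smaller confusion: in the statement $I$ is fixed, so $i_1=\min I$ is deterministic and ``summing over the exponentially-decaying distribution of $m=i_1 l_k$'' is not meaningful. The geometric observations you make (a $w_0^j$-block fellow-travels a translate of $A$; once the walk fellow-travels a far translate the projection to $A$ stabilises; $k$ must be large for the fellow-travelling to exceed the contraction constant) are sound, but they only reduce the problem, they do not solve it.

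The paper takes a genuinely different and Kesten-free route: a combinatorial surgery. For a suitable $D$ it exhibits, for every word $w$ with $\calI^k_{w_0^j}(w)=I$ and $d(p,\pi_A(g(w)p))\ge (i+1)D$, a word $w'$ of the same length and with $\calI^k_{w_0^j}(w')=I$ such that $d(p,\pi_A(g(w')p))\in[iD,(i+1)D)$, and the map $w\mapsto w'$ is uniformly bounded-to-one. The surgery replaces one block of length $l_k$ (the first one past the ``$(i+0.5)D$'' threshold, chosen at an odd multiple of $l_k$ so as not to disturb $\calI$) with one of four fixed words $v_1,\dots,v_4$ lying in two distinct conjugates $h_1E(g)h_1^{-1}$, $h_2E(g)h_2^{-1}$ and having large translation length on $h_iA$; Lemma~\ref{dichot} and the contraction property guarantee that at least one replacement lands the projection in the target window. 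The bounded-to-one property then yields $\matP[\cdot\ge(i+1)D\mid\calI=I]\le(1+1/N)^{-1}\matP[\cdot\ge iD\mid\calI=I]$ and hence exponential decay — and, crucially, the surgery preserves $\calI$, so the awkward global conditioning that you flagged as ``the main obstacle'' is handled for free. That design feature is precisely what your proposal is missing.
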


The reason why we require $w_0$ to be even is just to make the existence of certain words slightly easier to prove. The reason why we consider a fixed $\calI^k_{w_0^j}$ is merely technical: The final computation in the proof of the main theorem requires to consider a conditional probability, and this propagates backwards up to here.

We actually do not need the exponential decay, just a convergence to $0$, but this is what the proof gives anyway.

\begin{proof}
We want to find $D>0$ with the following property. Let $w$ be a word with $\calI^k_{w_0^j}(w)=I$ and $d(p,\pi_A(g(w)p))\geq (i+1)D$ for some integer $i\geq 2$. We want to associate to $w$ a new word $w'$, again with $\calI^k_{w_0^j}(w')=I$ and of the same length as $w$, so that $d(p,\pi_A(g(w')p))\in [iD, (i+1)D)$. Also, we will make sure that at most $N$ words $w$ are mapped to the same $w'$, where $N$ does not depend on $j$. This is enough for our purposes. In fact, once we manage to do so we have, setting $C_n=``\calI^k_{w_0^j}(X_n)=I''$,
$$\matP\left[d(p,\pi_A(X_np))\geq (i+1)D\; |\; C_n\right]\leq$$
$$N \left(\matP\left[d(p,\pi_A(X_np))\geq iD \;|\; C_n\right]-\matP\left[d(p,\pi_A(X_np))\geq (i+1)D\;|\; C_n\right]\right),$$
and hence
$$\matP\left[d(p,\pi_A(X_np))\geq (i+1)D\; |\; C_n\right]\leq (1+1/N)^{-1}\matP\left[d(p,\pi_A(X_np))\geq iD\; |\; C_n\right].$$
A straightforward inductive argument completes the proof.

We are left to show how to construct $w'$ with the required properties. Fix some word $s$ so that $g(s)\notin E(g)$ and let $C'$ be as in Lemma\ref{dichot}. For reasons that will be clearer later, we need $k$ large enough with the property that there exist words $v_1,v_2,v_3,v_4$ of length $l_k$ (recall that $l_k=km$ for $m$ the length of $w_0$) so that
\begin{enumerate}
\item $g(v_1),g(v_2)\in h_1 E(g)h_1^{-1}$, $g(v_3),g(v_4)\in h_2 E(g)h_2^{-1}$,
\item $h_1 E(g)\neq h_2 E(g)$,
\item $d(g(v_2)g(v_1)^{-1}x,x)$, $d(g(v_4)g(v_3)^{-1}y,y)$ are larger than a constant determined by the argument below for any $x\in h_1A,y\in h_2A$.
\end{enumerate}

It is easy to see that such words exist. For example, one can start choosing any $h_1,h_2$ as required (recall that $G$ is non-elementary so that $[G:E(g)]=\infty$) and words $u_1,u_2$ representing them. Then one finds words $t_1,t_2$ both of even length so that $g(t_i)\in E(g)$ and with long translation distance on $A$. One can then set $v'_1=u_1t_1u_1^{-1}$, etc. and prolong such words to get the $v_i$'s by adding final subwords (of even length) that represent the identity, making sure that the prolonged words have the same length and that such length is a multiple of the length of $w_0$. 

We will subdivide $w$ into three subwords $w=w_1w_2w_3$, and substitute the middle subword with one of the $v_i$'s. The length of $w_2$ will be $l_k$. Also, we let $w_1$ be the shortest initial subword of $w$ whose length is an \emph{odd} multiple of $l_k$ and so that $d(p,\pi_A(w))\geq (i+0.5)D$. First of all, it is now readily seen that the property that $w\mapsto w'$ is bounded-to-1 is satisfied. It is also readily seen that $\calI^k_{w_0^j}(w_1v_iw_3)=I$ for each $i$ (recall that $\calI^k_{w_0^j}(\cdot)$ is a set of even numbers).

We now only have to gain control on the projection on $A$. In view of (2) at least one of $g(w_1)h_1, g(w_1)h_2$ is not in $E(g)$, let us assume the former. Translating by $g(w_1)^{-1}$ we get
$$d_{g(w_1)h_1A}(g(w_1v_1w_3)p,g(w_1v_2w_3)p)=d_{h_1A}(g(v_1w_3)p,g(v_2w_3)p),$$
and we would like to show that this quantity is large. Any special path $\gamma$ from $g(v_1w_3)p$ to any point in $h_1A$ enters a suitable neighbourhood of $h_1A$ close to the projection point. The special path $g(v_2)g(v_1)^{-1}\gamma$ connects $g(v_2w_3)p$ to some point in $h_1A$ because $g(v_1),g(v_2)$ stabilize $h_1A$ in view of (1). Also, (3) implies that the entrance points in the neighbourhood of $h_1A$ are far, and hence so are the projection points of $g(v_1w_3)p$, $g(v_2w_3)p$.

The argument above shows that at most one of $g(w_1v_1w_3)p,g(w_1v_2w_3)p$ can project closer than $C'$ to $\pi_{g(w_1)h_1A}(A)$, where $C'$ is as in Lemma \ref{dichot}. In particular, if this holds for, say, $g(w_1v_1w_3)p$, we get a uniform bound on $d_A(g(w_1v_1w_3)p,(g(w_1)h_1p))$ and hence on $d_A(g(w_1v_1w_3)p,g(w_1)p)$. Therefore, for $D$ large enough the inequality $d_A(g(w_1v_1w_3)p,g(w_1)p)<D/2$ holds, and thus $d(\pi_A(g(w_1v_1w_3)p),p)\in [iD,(i+1)D)$, as required.
\end{proof}

\subsection{Many small projections}

Let $\calA=(A,w_0)$ be a pair where $A$ is the axis of the weakly contracting element $g(w_0)$ represented by the word $w_0\in W_m(S)$.

\medskip
\emph{Standing assumptions:} we assume that the length of $w_0$ is even. Also, we fix an odd $k\geq 10^9$ satisfying Lemma \ref{rndprojsmall} and so that $\matP[X_n\in E(g)]\leq 10^{-3}$ for each $n\geq k$ (see Lemma \ref{avoideg}).
\medskip

We denote $\calI_{w}=\calI^k_{w}$ for any word $w$. The reason why we fix an odd $k$ is that any subword counted by $\calI$ corresponds to a non-trivial power of $g(w_0^j)$, a fact that will be used later.
Slightly extending the notation we already used, for $i<j\leq n$ and a word of length $n$ we denote by $w[j,i]$ the concatenation of $w[j,n]$ and $w[n,i]$ ($w$ is hence considered as a cyclic word).
Given a pair as above, a word $w$ and an integer $\hat{i}$ so that $\hat{i}l$ is not larger than the length of $w$, denote
$$\calJ_{\calA,L}^+(w,\hat{i})=\left\{i\neq \hat{i}:d\left(p,\pi_A\Big(g(w[(i+1)l,\hat{i}l])p\Big)\right)\leq L\right\}\cap\calI_{w_0}(w),$$
for $l=km$.

\begin{figure}[h]
\centering
\includegraphics[scale=0.8]{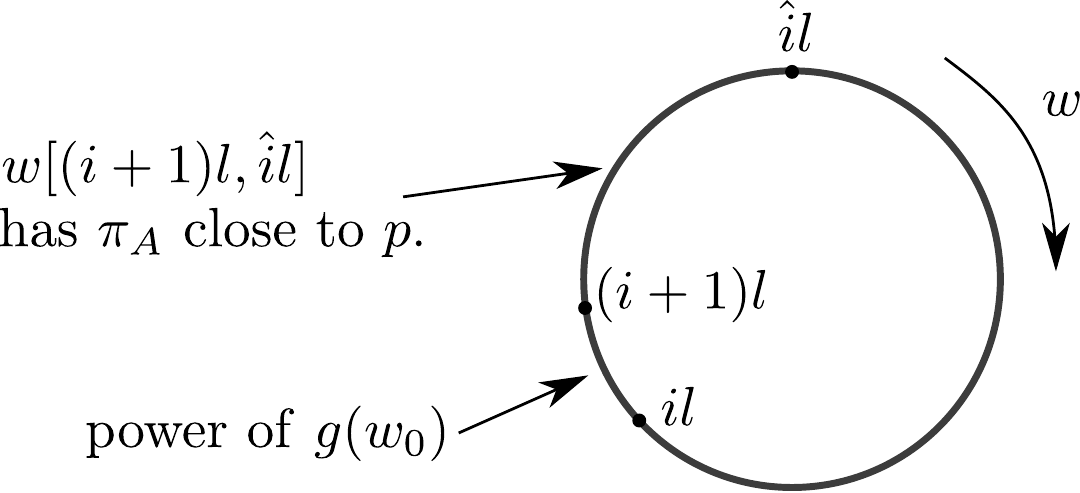}
\caption{$i$ is in $\calJ_{\calA,L}^+(w,\hat{i})$ if...} 
\end{figure}

In words, we replace $w$ by a cyclic permutation and we record the indices so that the projection of a final subword of $w$ following one of the occurrences of powers of $g(w_0)$ counted by $\calI_{w_0}$ has distance at most $L$ from the basepoint $p$. Similarly, reading $w$ backwards we set
$$\calJ_{\calA,L}^-(w,\hat{i})=\left\{i\neq \hat{i}:d\left(p,\pi_A\Big(g(w[(\hat{i}+1)l,il])^{-1}p\Big)\ \ \right)\leq L\right\}\cap\calI_{w_0}(w).$$
\medskip
Suppose that some $j$ is fixed and set $\calA_j=(A,w_0^j)$. Then, given a word $w$, we say that $(\hat{i}_1,\hat{i}_2)\in \left(\calI^k_{w_0^j}(w)\right)^2$ is a \emph{good} $L$-pair if $\hat{i}_1\in \calJ_{\calA_j,L}^+(w,\hat{i}_2)\cap \calJ_{\calA_j,L}^-(w,\hat{i}_2)$ and vice versa, i.e. if all projections suggested by the following picture are ``smaller'' than $L$.

\begin{figure}[h]
\centering
\includegraphics[scale=0.8]{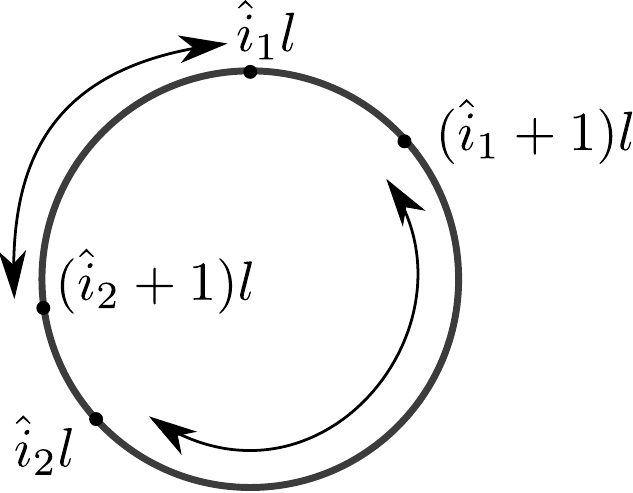}
\caption{A good pair. Arrows correspond to controlled projections} 
\end{figure}

The following proposition tells us that we expect to find a good pair of well-separated indices, once we replace $w_0$ by some large power to increase the translation distance on $A$.



\begin{prop}
\label{jpmneqempty}
There exists $L\geq 0$ with the following property. For each $\calA$ as above and $D\geq0$ there exists $j\geq 1$ so that $\calA_j=(A,w_0^j)$ satisfies the following property for some $C_3\geq 1$. For each $n\geq 1$, $I\subseteq \{0,2,\dots,2\lfloor n/(2lj)\rfloor\}$ of cardinality at least $100C_3$ we have
$$\matP\left[\forall \hat{i}_1,\hat{i}_2\in I,\, |\hat{i}_1-\hat{i}_2|\geq \frac{\#I}{C_3} \Rightarrow (\hat{i}_1,\hat{i}_2) {\mathrm\ is\ not\ } L{\mathrm -good}\; |\; \calI_{w_0^j}(X_n)= I\right]\leq C_3e^{-\#I/C_3}.$$
Moreover, $d(p,g^jp)>2L+D$.
\end{prop}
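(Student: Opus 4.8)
The plan is to reduce the statement about good pairs to an application of Lemma \ref{rndprojsmall} combined with Lemma \ref{manywpd}-style counting. First I would fix $L$: by Lemma \ref{rndprojsmall} (applied to $w_0$, hence with a uniform constant $C_2$) there is a threshold $L$ so that, conditioned on $\calI^k_{w_0^j}(X_n)=I$, the probability that a given ``final subword'' starting just after an index $i\in I$ projects farther than $L$ from $p$ is at most $1/2$, say; the key point is that Lemma \ref{rndprojsmall} gives this bound \emph{uniformly in $j$} (the constant $C_2$ depends on $w_0$ and $k$ only, not on the power $j$), which is exactly why the proposition can quantify over $D$ afterwards. Then, given $D$, choose $j$ large enough that the translation length of $g^j=g(w_0^j)$ on $A$ exceeds $2L+D$; this is possible since the orbit of $g$ on $A$ is a quasi-geodesic, and this secures the ``moreover'' clause.

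The combinatorial heart is then the following: conditioned on $\calI_{w_0^j}(X_n)=I$, for each ordered pair $(\hat i_1,\hat i_2)\in I^2$ with $|\hat i_1-\hat i_2|$ large, being an $L$-good pair requires four independent-enough projection events to each land within $L$ of $p$. The point is that, after conditioning on which blocks are powers of $g(w_0^j)$, the letters filling the \emph{gaps} between two far-apart indices $\hat i_1,\hat i_2$ (which determine the relevant final/initial subwords $w[(i+1)l,\hat i l]$ etc.) are still ``fresh'' randomness, so Lemma \ref{rndprojsmall} applies to each of the four projections and each fails to be $\leq L$ with probability bounded below by a fixed $\rho>0$. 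I would make this precise by noting that for a fixed $\hat i_2$, the events ``$i\in\calJ^+_{\calA_j,L}(X_n,\hat i_2)$'' for the various $i\in I$ that are far from $\hat i_2$ can be handled by a union bound / conditioning argument so that the probability that \emph{no} such $i$ works is at most (roughly) $\rho^{\#I/C_3}$ for a suitable $C_3$ — i.e. with $\#I/C_3$ essentially disjoint ``trials,'' each of which produces a good partner with probability bounded away from $0$. Doing this simultaneously for $\calJ^+$, $\calJ^-$ and for both orderings of the pair, one concludes by another union bound over $I$ that the probability that there is \emph{no} $L$-good pair of indices that are $\geq \#I/C_3$ apart is at most $C_3 e^{-\#I/C_3}$, as claimed (using, if needed, inequality \eqref{binom} to absorb the number of candidate pairs into the exponential).

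The step I expect to be the main obstacle is the independence bookkeeping: after conditioning on the event $\calI^k_{w_0^j}(X_n)=I$ one must argue carefully that the projections attached to different, well-separated indices $i\in I$ are ``independent enough'' — i.e. that conditioning on the value of the projection near $\hat i_2$ (and on the block structure $I$) does not destroy the estimate of Lemma \ref{rndprojsmall} for the projection near another index $i$. The cleanest way to arrange this is to expose the random letters block by block and apply Lemma \ref{rndprojsmall} in its stated conditional form (conditioned on the whole configuration $\calI^k_{w_0^j}=I$), to one gap at a time, using that the subword whose projection is being controlled is a genuinely ``new'' segment relative to what has already been revealed; the requirement in Lemma \ref{rndprojsmall} that the conditioning be on a \emph{fixed} $\calI^k_{w_0^j}$ is precisely what makes this iteration legitimate. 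Once that is set up, the rest is a routine geometric-series/union-bound computation of the sort already carried out in Lemmas \ref{manywpd} and \ref{rndprojsmall}.
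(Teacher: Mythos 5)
Your outline assembles the right ingredients — fix $L$ via Lemma \ref{rndprojsmall} (which is indeed uniform in $j$), then pick $j$ large so that $d(p,g^jp)>2L+D$, and run a conditional/binomial-count argument modeled on Lemma \ref{manywpd} — and the independence bookkeeping you flag (revealing blocks one at a time while holding $\calI^k_{w_0^j}=I$ fixed) is exactly the delicate point in the paper's proof of the intermediate Lemma \ref{45}. But there is a real gap at the step where you try to produce a \emph{mutually} good pair.

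What your argument gives, for each fixed $\hat{i}_2$, is that with probability $1-O(e^{-\#I/C})$ there is \emph{some} $i$ far from $\hat{i}_2$ with $i\in\calJ^+_{\calA_j,L}(X_n,\hat{i}_2)\cap\calJ^-_{\calA_j,L}(X_n,\hat{i}_2)$. That is a statement about $i$ being good for $\hat{i}_2$; it says nothing about $\hat{i}_2$ being good for that particular $i$, and the two events involve genuinely different subwords (one is a prefix ending before $\hat{i}_2$, the other a prefix ending before $i$). A union bound over orderings or over $I$ does not repair this: you can certify ``some partner exists for each index,'' but the partner you find for $\hat{i}_2$ and the partner that works in the reverse direction need not coincide, so no $L$-good \emph{pair} is produced. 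The paper closes this gap with a quantitatively stronger statement followed by a deterministic pigeonhole: Lemma \ref{45} shows that, except on an exponentially small set, for \emph{every} $\hat{i}\in I$ one has $\#\calJ^{\pm}(X_n,\hat{i})\geq \frac{4}{5}\#I$ (not merely $\geq 1$). Given this, for each $\hat{i}_0$ at least $3\#I/5$ indices are good for $\hat{i}_0$, so by double counting there is an $\hat{i}_1$ which is good for at least $3\#I/5$ indices; intersecting with the $\geq 3\#I/5$ indices good for $\hat{i}_1$ leaves $\geq\#I/5$ candidates, the furthest of which is far enough from $\hat{i}_1$. Your plan needs both the ``most indices, not just one'' strengthening and this counting step; neither is obtained by a union bound. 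A secondary, more technical point: the block-by-block application of Lemma \ref{rndprojsmall} you envisage does not plug in directly, because the event ``$i_1\notin\calJ^+(X_n,\hat{i})$'' is not literally of the form treated by that lemma; the paper's Claim inside Lemma \ref{45} instead splits $w=w_1w_2w_3$ and shows that failure forces one of four small-probability events about $w_1,w_2$ (large projection of $w_1$ or $w_1^{-1}$, $g(w_1)\in E(g)$, or $w_2$ hitting the one bad power of $g(w_0)^j$), each decoupled from the conditioning on $w_3$. That reduction, together with Lemma \ref{dichot}, is what makes the iteration legitimate.
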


The proof is (essentially) an induction on $|I|$, the base case being Lemma \ref{rndprojsmall}.

Let us now choose $j$ and $L$. Let $C$ be a contraction constant for $A$. We fix $L>C'+C$ so that $\matP[d(p,\pi_A(X_np))\geq L-C'-C|\calI_{w_0^j}(X_n)=J]\leq 10^{-3}$ for any $J\subseteq 2\N$ and $C'$ as in Lemma \ref{dichot} (Lemma \ref{rndprojsmall} guarantees that $L$ exists). Also, we pick $j$ so that $d(p,g(w_0)^jp)>2L+D$. We now drop all subscripts in $\calI,\calJ^{\pm}$ if they are the ones that can be inferred from the choices we just made.

Proposition \ref{jpmneqempty} will follow easily from the following lemma.

\begin{lemma}
\label{45}
There exists $K$ so that
 $$\matP\left[\exists \hat{i}\in I: \#\calJ^+(X_n,\hat{i})\leq \frac{4}{5}\#I\; |\; \calI(X_n)= I\right]\leq Ke^{-\#I/K},$$
and the same holds for $\calJ^-$.
\end{lemma}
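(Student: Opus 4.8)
The plan is to fix one index $\hat i\in I$ and estimate the conditional probability that $\#\calJ^+(X_n,\hat i)\le \tfrac45\#I$, then sum over the at most $\#I$ choices of $\hat i$. The key point is that, conditioned on $\calI(X_n)=I$, the word $X_n$ is a concatenation in which the blocks indexed by $I$ are (conditioned to be) powers of $g(w_0^j)$, while the blocks indexed by $\{0,2,\dots\}\setminus I$ and the odd blocks are still ``free'' in a suitable sense. So for a fixed target index $\hat i$, the events $\{i\in\calJ^+(X_n,\hat i)\}$ for different $i\in I$ depend on the random subword $X_n[(i+1)l,\hat il]$; cyclically rerooting $w$ at position $\hat il$ turns the family $\{X_n[(i+1)l,\hat il]:i\in I\}$ into a nested increasing family of prefixes of a single random word whose increments (the chunks between consecutive indices of $I$) are independent. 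Thus the value $d(p,\pi_A(g(w[(i+1)l,\hat il])p))$ is, as $i$ runs through $I$ in the cyclic order determined by $\hat i$, a process to which I can apply Lemma \ref{rndprojsmall} increment by increment.

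More concretely, the first step is to reduce to the following: conditioned on $\calI_{w_0^j}(X_n)=I$ and on the ``prefix'' up to some index $i'\in I$, the probability that $d(p,\pi_A(g(X_n[(i+1)l,\hat il])p))\le L$ for the next index $i\in I$ is at least some fixed $p_0>1/2$, uniformly. This is exactly the content of Lemma \ref{rndprojsmall}: the subword $X_n[(i+1)l,il]$ is a freshly sampled random word (of length a multiple of $l$) independent of everything read so far, and appending it to the already-fixed suffix $X_n[(il, \hat i l])$ only improves the projection estimate because one can run Lemma \ref{rndprojsmall} with basepoint $\pi_A(g(X_n[il,\hat il])p)$ — here I use that $A$ is contracting and that $g$ acts by isometries, so the relevant conditional law is again a simple random walk and Lemma \ref{rndprojsmall} applies with $p$ replaced by the translate of $p$ corresponding to that suffix. (One has to be a little careful that the conditioning $\calI(X_n)=I$ forces the intervening even-index non-$I$ blocks and odd blocks to avoid $W_{w_0^j}^k$, but this conditioning is on a positive-probability event that is ``local'' to each block and does not destroy the independence of increments, so the bound $p_0$ survives, possibly after shrinking.)

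Once this is in place, the second step is a standard large-deviation / Azuma-type estimate: $\#\calJ^+(X_n,\hat i)$ stochastically dominates a sum of independent Bernoulli$(p_0)$ variables, one per element of $I\setminus\{\hat i\}$, so $\matP[\#\calJ^+(X_n,\hat i)\le \tfrac45\#I\mid \calI(X_n)=I]\le e^{-c\#I}$ for a constant $c$ depending only on $p_0$ (using $p_0>1/2$ and $\tfrac45<p_0$ after choosing $k$, hence $j$, appropriately — or rather, since $p_0$ can be pushed close to $1$ by first taking $L$ large via Lemma \ref{rndprojsmall}, the threshold $\tfrac45$ is comfortably below $p_0$). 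Then a union bound over the $\le\#I\le n$ choices of $\hat i$ gives $\#I\cdot e^{-c\#I}\le K e^{-\#I/K}$ after adjusting $K$, which is the claimed inequality; the argument for $\calJ^-$ is identical, reading $X_n$ backwards (i.e. using inverses of subwords, which form the same kind of simple random walk).

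The main obstacle I expect is the bookkeeping around the conditioning $\calI_{w_0^j}(X_n)=I$: one must check carefully that conditioning on this event (a) leaves the blocks indexed by $I$ distributed as independent uniform words in $W_{w_0^j}^k$, (b) leaves the remaining blocks independent, distributed as uniform words conditioned to avoid $W_{w_0^j}^k$, and (c) therefore still allows the ``fresh increment'' argument feeding into Lemma \ref{rndprojsmall}, where the increment glued on between consecutive $I$-indices is a concatenation of one genuine $g(w_0^j)$-power block and several conditioned free blocks — this composite increment is still an independent random word, so Lemma \ref{rndprojsmall} (applied to a shifted basepoint) controls the new projection. A secondary subtlety is the cyclic rerooting at $\hat i l$: one should phrase everything in terms of the cyclic word $X_n$ so that ``prefixes ending at $\hat i l$'' genuinely nest, and note that the law of $X_n$ as a cyclic word of length $n$ is exchangeable enough that fixing the reroot point costs nothing. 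Neither issue is deep, but getting the independence statements stated cleanly is where the care is needed.
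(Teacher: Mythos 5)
Your high-level architecture (fix $\hat i$, induct along $I$ in the cyclic order from $\hat i$, show a uniform per-index success probability, feed it into a Chernoff-type bound, and union over $\hat i$) is exactly the paper's. The gap is in the single most important step: the per-index estimate. You assert that this is "exactly the content of Lemma \ref{rndprojsmall}" after a "basepoint shift", and that prepending the fresh random increment "only improves the projection estimate". Neither claim is correct. Lemma \ref{rndprojsmall} controls $\pi_A(X_n p)$ for the fixed basepoint $p$; it does not transfer to $\pi_A(g(w')q)$ for an arbitrary point $q = g(w_3)p$, which may be very far from $A$ and whose projection onto $A$ is, under your conditioning, forced to be far from $p$. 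Prepending a random word can move the projection either way, and by no means "only improves" it.

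What actually makes the per-index estimate work is a geometric dichotomy argument, not a basepoint shift. Writing $w_1$ for the segment strictly between blocks $i_1$ and $i_2$, $w_2$ for the constrained block at $i_2$, and $w_3$ for the rest, one shows that $i_1\notin\calJ^+$ forces at least one of four unlikely events: (1) the suffix $g(w_2w_3)p$ projects close to $p$, (2) $p$ projects far from $g(w_1)p$ on the translated axis $g(w_1)A$, (3) $g(w_1)p$ projects far from $p$ on $A$, or (4) $g(w_1)\in E(g)$. The heart of this is Lemma \ref{dichot}: once $d_{g(w_1)A}(p, g(w_1w_2w_3)p)$ is large, $\pi_A(g(w_1w_2w_3)p)$ is $C'$-close to $\pi_A(g(w_1)A)$, which reduces everything to the projection of the translate $g(w_1)p$ — and only there does (the choice of $L$ from) Lemma \ref{rndprojsmall} enter, bounding the probabilities of (2) and (3). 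Moreover, (1) is not about the fresh random increment at all but about the conditioned suffix together with the block $w_2$, which under $\calI(X_n)=I$ is forced to be a power of $g(w_0)^j$; the paper handles this by a separate counting observation (given $w_3$, at most one such power can bring the projection near $p$, and hitting it is unlikely since $k$ is large). Your proposal addresses none of this: it treats the step as a plug-in of Lemma \ref{rndprojsmall}, omitting both the dichotomy and the treatment of the constrained block, which together form the actual geometric content of the lemma.
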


\emph{Proof.}
We prove the lemma for $\calJ^+$, the proof for $\calJ^-$ is symmetric.

We can fix $\hat{i}\in I$ and just show the same conclusion with ``$\exists \hat{i}\in I$'' removed, the lemma will follow upon taking a sum over all $\hat{i}\in I$.
For distinct $i_1,i_2\in I$ we write $i_1<i_2$ if either $i_1< i_2\leq \hat{i}$, $\hat{i}< i_1< i_2$ or $i_2\leq \hat{i}< i_1$ (i.e., we read $w$ in cyclic order starting after $\hat{i}lj$ and determine the order in this way).

{\bf Claim:} There exists $\rho<10^{-2}$ so that for each $i_1<\dots<i_l$ contained in $I$ we have
$$\matP[i_1\notin \calJ^+(X_n,\hat{i}) |\calI(X_n)=I, i_2,\dots,i_l\notin \calJ^+(X_n,\hat{i})]\leq \rho.$$
\smallskip

\emph{Proof of claim.} If the conditional hypothesis holds for the word $w$ and $i_1\notin \calJ^+(w,\hat{i})$, then, as we are about to show, at least one of the conditions below hold.
Denote $w_1=w[(i_1+1)lj,i_2lj]$, $w_2=w[i_2lj,(i_2+1)lj]$, $w_3=w[(i_2+1)lj,\hat{i}lj]$.

\begin{figure}[h]
\centering
\includegraphics[scale=0.8]{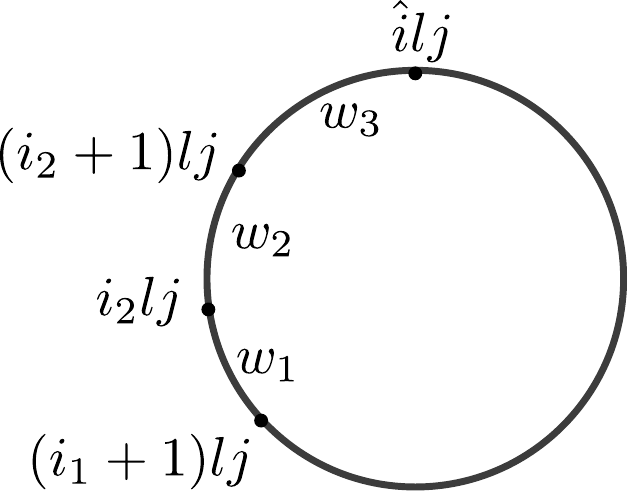} 
\end{figure}

\begin{enumerate}
 \item $d(p,\pi_A(g(w_2w_3)p))\leq L$. ($w_2w_3$ projects close to $p$.)
 \item $d(\pi_{g(w_1)A}(p), g(w_1)p)>L-C'-C$. ($w_1^{-1}$ projects far from $p$.)
 \item $d(p,\pi_A(g(w_1)p))> L-C'-C$. ($w_1$ projects far from $p$.)
 \item $g(w_1)\in E(g)$.
\end{enumerate}

 In fact, suppose that all conditions fail. Then the negation of (1) multiplied on the left by $g(w_1)$ and the negation of (2) imply 
$$d_{g(w_1)A}(p,g(w_1w_2w_3)p)\geq $$
$$d(g(w_1)p,\pi_A(g(w_1w_2w_3)p))-d(\pi_{g(w_1)A}(p), g(w_1)p)> C'+C.$$
Thus, by Lemma \ref{dichot} (and $g(w_1)\notin E(g)$), $\pi_A(g(w_1w_2w_3)p)$ is $C'$-close to $\pi_A(g(w_1)A)$. But $\pi_A(g(w_1)A)$ has diameter at most $C$ and contains a point at distance at most $L-C'-C$ from $p$ by the negation of (3). In particular, $\pi_A(g(w_1w_2w_3)p)$ is $L$-close to $p$.
As $w_1w_2w_3=w[(i_1+1)lj,\hat{i}lj]$, we see that by the very definition this means that $i_1$ is in $\calJ^+(w,\hat{i})$.

We can conclude the proof of the Claim. The point about the conditions stated above is that they are (or, in the case of (1), are easily converted into) properties of $w_1,w_2$, so that they are ``independent enough'' from the conditional hypothesis, which is mostly about $w_3$.

The probability that (2) holds is bounded by $10^{-3}$ by our choice of $L$, and similarly for (3). The probability that (4) holds is also at most $10^{-3}$, by our choice of $k$ (see Standing Assumptions). Given any possible $w_3$, there is at most one power of $g(w_0)^j$ so that $d(\pi_A(g(w_2w_3)p), p)\leq C'+C$. The probability that an element of $W^k$ represents such power of $g(w_0)^j$ is smaller than $10^{-3}$ (as can be seen for example using Inequality \ref{binom}, using $k\geq 10^9$). This observation proves that the probability that $1$ happens is smaller than $10^{-3}$ as well. Summing these probabilities we get the desired bound.\qed

\smallskip
Using the claim, it is not hard to complete the proof with an argument similar to Lemma \ref{manywpd}. For $w\in W_n(S)$ so that $\calI(w)=I$ denote $\hat{I}(w)=I\backslash \calJ^+(w, \hat{i})$. For each $I_0\subseteq I$ containing the elements $i_1<\dots<i_l$ we have
$$\matP[\hat{I}(X_n)\supseteq I_0|\calI(X_n)=I]=$$
$$\Pi_{t\geq1} \matP[i_t\notin \calJ^+(X_n,\hat{i}) |\calI(X_n)=I, i_{t+1},\dots,i_l\notin \calJ^+(X_n,\hat{i})]\leq \rho^l.$$
If the word $w$ is so that $\#\calJ^+(w)\leq \frac{4}{5}\#\calI(w)$ but $\calI(w)=I$, then there exists $I_0\subseteq \hat{I}(w)$ of cardinality $\lceil \#I/5\rceil$. The final, straightforward, computation is just summing over all possible $I_0$'s and using inequality \ref{binom}:
$$\matP\left[\#\calJ^+(X_n)\leq \frac{4}{5}\#\calI(X_n)|\calI(X_n)= I\right]\leq \binom{\# I}{\lceil \#I/5\rceil} \rho^{\lceil\#I/5\rceil}\leq$$
$$(100)^{\#I/5 +1}\rho^{\#I/5}\leq 100 (100\rho)^{\#I/5},$$
and the last term decays exponentially. (We used $(\#I e)/\lceil \#I/5\rceil\leq 100$, which follows from $\#I\geq 100 C_3$.)\qed
\medskip

\emph{Proof of Proposition \ref{jpmneqempty}.} Suppose we fixed a word $w$ of length $n$ so that $\calI(w)=I$. We say that $\hat{i}_1\in I$ is \emph{good} for $\hat{i}_2\in I$ if $\hat{i}_1\in \calJ_{\calA_j,L}^+(w,\hat{i}_2)\cap \calJ_{\calA_j,L}^-(w,\hat{i}_2)$. We want to show that if $w$ satisfies $\#\calJ^{\pm}(w,\hat{i})\leq \frac{4}{5}\#I$ for each $\hat{i}\in I$ then there exists a pair of indices $\hat{i}_1$, $\hat{i}_2$ so that $|\hat{i}_1-\hat{i}_2|\geq \#I/100$ but $\hat{i}_1$ is good for $\hat{i}_2$ and vice versa.
The fraction of words without this property is exponentially small in $\#I$ by Lemma \ref{45}.

Fix instead a word satisfying the property. Then we can make the following counting argument. Given any index $\hat{i}_0$, at least $3\# I/5$ indices are good for $\hat{i}_0$. In particular, there is an index $\hat{i}_1$ which is good for at least $3\# I/5$ indices. Out of these, there are at least $\#I/5$ indices which are good for $\hat{i}_1$. If $\hat{i}_2$ the furthest such index from $\hat{i}_1$, then the pair $\hat{i}_1$, $\hat{i}_2$ is the one we were looking for.

\subsection{Proof of main theorem} We are only left to put the pieces together. Then fix $w_0,j,k,L$ as in Proposition \ref{jpmneqempty} for $D$ as in Lemma \ref{hghg}. An exponentially small fraction $\leq C_4e^{-n/C_4}$ of the words $w$ of any given length is so that $\#\calI_{w_0^j}^k(w)\leq n/C_4$. Let $C_3$ be given by Proposition \ref{jpmneqempty}. Summing Lemma \ref{avoideg} over all subwords of length at least $n/(C_3C_4)$, we see that a fraction $\leq C_5e^{-n/C_5}$ of words contains a subword of length at least $n/(C_3C_4)$ representing an element of $E(g(w_0))$.

Given a word $w$ we say that $(\hat{i}_1,\hat{i}_2)\in \left(\calI^k_{w_0^j}(w)\right)^2$ is a \emph{very good} pair if it is a good pair and $|\hat{i}_1-\hat{i}_2|\geq \#\calI^k_{w_0^j}(w)/C_3$.

Recall that we chose $k$ to be odd so that any subword counted by $\calI^k_{w_0^j}$ corresponds to a non-trivial power of $g(w_0^j)$. Lemma \ref{hghg} is a criterion for elements (conjugate to an element) of the form $h_1g^{n_1}h_2g^{n_2}$ to represent a weakly contracting element. Unravelling the definitions we see that if a word $w$ has no subwords of length at least $n/(C_3C_4)$ representing an element in $E(g(w_0))$ and it admits a very good pair then $w$ has, up to cyclic permutation, the form prescribed by Lemma \ref{hghg} and in particular it represents a weakly contracting element.

Finally, keeping this and the estimates we gave above into account (which we use to get rid of words with few occurrences of power of $g(w_0^j)$ and long subwords in $E(g)$) we can make the following estimate for $n$ large:
$$\matP[g(X_n) {\mathrm\ not\ weakly\ contracting}]\leq C_4e^{-n/C_4}+C_5e^{-n/C_5}+$$
$$\sum_{|I|\geq n/C_1} \matP[\forall \hat{i_1},\hat{i_2}\in I, \, (\hat{i_1},\hat{i_2}) {\ is\ not\ very\ good}|\calI(X_n)= I]\;\matP[\calI(X_n)= I]\leq$$
$$C_6e^{-n/C_6}+C_3e^{-n/C_3}\sum \matP[\calI(X_n)= I]\leq C_7e^{-n/C_7},$$
for appropriate $C_6,C_7$.\qedhere

\subsection{Proof of the corollaries}
We now prove the following corollary of the main theorem.

\begin{cor}
Let $G_1,G_2$ be a non-elementary group supporting the path system (resp. weak path system) $(X,\calPS)$ and containing a contracting (resp. weakly contracting) element.
 \begin{itemize}
  \item For any isomorphism $\phi:G_1\to G_2$ there exists a (weakly) contracting $g\in G$ so that $\phi(g)$ is also (weakly) contracting.
  \item If $H$ is an amenable group and $\phi:G_1\to H$ is any homomorphism, for any $h\in \phi(G_1)$ there exists a (weakly) contracting $g\in \phi^{-1}(h)$.
 \end{itemize}
\end{cor}

\begin{proof}
 The proof of both points relies on considering a simple random walk $\{X_n\}$ on $G_1$ and pushing it forward to a random walk on the target group.

In the first case, one obtains a simple random walk $\{Y_n\}$ on $G_2$. For $n$ large enough, the probability that both $X_n$ and $Y_n$ represent a non-weakly contracting element are less than $1/2$, so that for such $n$ there exists a (weakly) contracting element of $G_1$ of word length at most $n$ that is mapped to a (weakly) contracting element of $G_2$. (A probability-free way of saying this is that, for $S$ a generating system for $G$ and $n$ large enough, the sets of words of length $n$ in both $S$ and $\phi(S)$ that do not represent a (weakly) contracting element have cardinality less than a half of the cardinality of $W_n(S)$.)

In the case of $2)$, the push-forward of $\{X_n\}$ is a symmetric finitely supported random walk on $H$. It is well-known that the probability that such a random walk ends up in a given element $h\in H$ decays slower than exponentially, provided that $h$ is in the group generated by the support of the random walk (which is in our case $\phi(G_1)$), see \cite[Theorem 12.5]{Wo}. Hence, for such $h$ and each large enough $n$, $\matP[X_n \text{\ (weakly)\ contracting}|\phi(X_n)=h]>0$, and we are done.
\end{proof}

\subsection{A simpler proof of a simpler theorem}
\label{shortcut}
There is a shortcut in the proof of genericity of weakly contracting elements if one wants to show that the probability of ending up in a non-weakly contracting element goes to 0, without the extra information that it does so exponentially fast.

This is because there exists $K$ so that, given a word $w$ of length at most $\log(n)/K$, the probability that a random word of length $n$ does not contain $w$ goes to $0$ for $n$ going to infinity. The idea is then to look for powers of a word representing a weakly contracting element and using (the appropriate versions of) Lemmas \ref{avoideg} and \ref{rndprojsmall} to show that the hypothesis of Lemma \ref{hg} are satisfied with high probability. More details are below.

Fix a word $w_0$ representing the weakly contracting element $g$, with axis $A$. We claim that for an appropriate $f(n)\in O(\log(n))$, ``most'' words $w$ of length $n$ (meaning all except for a fraction of size going to $0$ as $n$ goes to infinity) have a uniquely determined decomposition $w=w_1w_2w_3$ with the following properties:
\begin{enumerate}
\item $w_2$ is a power of $w_0$ with exponent $f(n)$,
\item either $w_1=\emptyset$ and $w_3$ is any word of length $n-f(n)$ or $w_3w_1$ is any word of length $n-f(n)$ not ending with $w_0$.
\item $g(w_3w_1)\notin E(g(w_0))$,
\item $d(p,\pi_A(g(w_3w_1)p))\leq f(n)/3$.
\end{enumerate}
 
We already commented on (1), and (2) just says that $w_2$ is the first occurrence of the $w_0^{f(n)}$.

Regard $w$ as a cyclic word. Then the probability that any given subword of $w$ of length $n-f(n)$ represents an element of $E(g(w_0))$ is exponentially small in $n$ by Lemma \ref{avoideg}, so that a stronger version of (3) can be shown summing over all possible such subwords.

Finally, using a small variation of Lemma \ref{rndprojsmall} that can be proven with the same techniques (and fewer technical details), one can show (4). More precisely, one considers separately the case when $w_1\neq\emptyset$ and $w_1= \emptyset$. To deal with the first case, one checks that Lemma \ref{rndprojsmall} still holds when replacing in the statement ``$\calI^k_{w_0^j}(X_n)=I$'' with the new conditioning event that $X_n$ does not terminate with $w_0$. To deal with the first case, one removes the conditioning event altogether and again checks that the proof goes through.

Lemma \ref{hg} applies to any element represented by a word satisfying the conditions above, so that all such elements are weakly contracting.

\section*{Appendix: Weak Tits Alternative}

In this appendix we sketch the proof of a result implied by Theorem \ref{hypemb} and results in \cite{DGO} (see, e.g., Theorem 2.23), that is to say that subgroups containing (weakly) contracting elements are either virtually cyclic or they contain a free group on two generators. We do so, as explained in the introduction, to show that our techniques can prove Theorem \ref{randomwalksintro} in an almost self-contained way.

\begin{thm}
 [Weak Tits Alternative] Let $G$ be a finitely generated group equipped with the weak path system $(X,\calPS)$. Suppose that $G$ contains a weakly $\calPS$-contracting element. Then either $G$ is virtually cyclic or it contains a free group on two generators.
\end{thm}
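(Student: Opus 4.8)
The plan is to read off the two alternatives from the elementary subgroup $E(g)$ of a weakly contracting element $g$, and, in the non--virtually cyclic case, to construct a free subgroup from $g$ together with one extra element by means of the concatenation tool Lemma~\ref{metriconcat}. Fix a weakly $\calPS$-contracting $g\in G$ with axis $A$ and contraction constant $C$. By Corollary~\ref{e(g)}, $E(g)$ is virtually cyclic; hence if $G$ itself is virtually cyclic we are in the first case, while otherwise $E(g)$ is a proper subgroup and we may fix some $h\in G\setminus E(g)$ (Lemma~\ref{farcosets} even supplies such an $h$ with $d(hA,A)$ as large as we like, but we will not need this). The geometric input I would record first is that for every $k\in\Z$ the elements $g^kh^{\pm1}$ again lie outside $E(g)$ --- since $E(g)$ is a subgroup containing $g$ --- so by Lemma~\ref{boundorcobound} together with the description of $E(g)$ in Corollary~\ref{e(g)}, the set $\pi_A(g^kh^{\pm1}A)$ has diameter at most a single constant $K$ independent of $k$; symmetrically $hA$ is a $C$-contracting axis of the weakly contracting element $hgh^{-1}$, with the analogous uniform bound on the projections $\pi_{hA}(g^kh^{\pm1}A)$.

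Next I would put $a=g^N$ and $b=hg^Nh^{-1}$ for a large integer $N$ to be fixed, and show that $\langle a,b\rangle$ is free on $\{a,b\}$ once $N$ is large enough, which yields $F_2\hookrightarrow G$. It suffices to prove that every nonempty cyclically reduced word $w$ in $a,b$ has infinite order, since a general nonempty reduced word is conjugate to such a $w$ and is trivial in $G$ exactly when $w$ is; the cases $w=a^k$, $w=b^k$ being obvious, assume $w=a^{k_1}b^{l_1}\cdots a^{k_r}b^{l_r}$ with all exponents nonzero. To $w$ I would attach the bi-infinite $2r$-periodic word $\cdots www\cdots$, let $\sigma_i$ enumerate its syllables and $P_i$ be the product of the syllables before $\sigma_i$, and take the $i$-th piece to be $A_i=P_iA$ when $\sigma_i$ is a power of $a$ and $A_i=P_ihA$ when it is a power of $b$, with seam points $x_i,y_i$ the $P_i$- (resp.\ $P_ih$-)translates of a fixed basepoint $p\in A$ and of $\sigma_i p$. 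One checks that the hypotheses of Lemma~\ref{metriconcat} then hold with constants independent of $w$: each $A_i$ is $C$-contracting (a); each $\pi_{A_i}(A_{i\pm1})$ is a translate of some $\pi_A(g^kh^{\pm1}A)$ or $\pi_{hA}(g^kh^{\pm1}A)$ and so has diameter $\le K$ (b); $d(x_i,y_i)$ and $d(y_i,x_{i+1})$ take only finitely many values by $2r$-periodicity (d); and for (c) one uses Lemma~\ref{loccnst} to see that $\pi_{A_i}(A_{i-1})$ and $\pi_{A_i}(A_{i+1})$ lie within a uniform distance of $x_i$ and $y_i$ respectively, while $d(x_i,y_i)$ --- essentially the translation length of $\sigma_i$ along its axis --- is at least $N/\mu$ minus a constant, so (c) holds as soon as $N$ exceeds a bound depending only on $C$, $K$, the quasi-isometry constants of $A$, and the constant $D$ produced by Lemma~\ref{metriconcat}. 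Lemma~\ref{metriconcat} then gives that $i\mapsto x_i$ is a genuine bi-infinite quasi-geodesic, and since $w$ carries $x_i$ to $x_{i+2r}$ (the word is $2r$-periodic and $P_{i+2r}=wP_i$), the orbit $\{w^n x_0\}$ is unbounded, so $w$ has infinite order and $\langle a,b\rangle\cong F_2$.

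The difficulty here is organizational rather than conceptual: one must set up the pieces $A_i$ and the seam points $x_i,y_i$ so that all four hypotheses of Lemma~\ref{metriconcat} hold simultaneously and with constants uniform over every cyclically reduced $w$. The two facts that carry the argument are that $g^kh^{\pm1}\notin E(g)$ forces the crossing projections in (b) to be bounded (far-apart translates of $A$ and $hA$ see each other only in a bounded set), and that passing from $g,\,hgh^{-1}$ to their large powers $a,b$ makes the length-versus-projection inequality (c) automatic. A small point requiring care is the reduction to cyclically reduced $w$: this is precisely what makes $\cdots www\cdots$ a reduced bi-infinite word, so that there is no cancellation at the seams between consecutive copies of $w$ and the $A_i$ genuinely form an infinite concatenation to which Lemma~\ref{metriconcat} applies.
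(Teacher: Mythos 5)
Your proof is correct, and it takes a route that differs from the paper's in two places worth noting. First, where the paper manufactures a second weakly contracting element via its construction tool (Lemma~\ref{hg} applied to $h_0g_0^N$) and then takes powers of both, you instead ping-pong between $g$ and its conjugate $hgh^{-1}$. This is a genuine simplification: the conjugate is automatically weakly contracting with axis $hA$, and all the projection bounds you need reduce, after translating, to $\pi_A(\gamma A)$ with $\gamma\in\{g^kh^{\pm1},h^{\pm1}\}$, which lie outside $E(g)$ simply because $E(g)$ is a subgroup containing $g$; so Lemma~\ref{hg} is bypassed entirely and only Lemma~\ref{boundorcobound}/Corollary~\ref{e(g)} are used. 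Second, where the paper only says it ``uses the strategy'' of Lemma~\ref{metriconcat} and re-runs the induction on projections for the sequence $x_i,y_i$ attached to a word $g^{a_1}h^{b_1}\cdots$, you set up the $2r$-periodic bi-infinite sequence of pieces and seam points so as to apply Lemma~\ref{metriconcat} literally, then read off unboundedness of $\{w^nx_0\}$ from periodicity. That is cleaner and makes the dependence of constants more transparent: hypothesis (c) is met once $N$ exceeds a bound depending only on $C$, $\mu$, $d(p,hp)$ and the constants $K, C', D$, while the constants in (d) are allowed to depend on $w$ (they do, through the exponents), which is harmless since you only need a quasi-geodesic orbit for that particular $w$. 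The reduction to cyclically reduced words, and the resulting reduced bi-infinite word, is exactly what keeps consecutive pieces genuinely distinct translates so that the projection bounds apply; you are right to flag this as the point needing care. The one cosmetic inaccuracy is your description of $y_i$ for a $b$-syllable: it should be $y_i=P_ihg^{Nl_i}p$ (equivalently $P_i\sigma_ihp$) rather than $P_ih\sigma_ip$, so that $y_i\in A_i=P_ihA$ and $d(y_i,x_{i+1})=d(p,hp)$; this does not affect the argument. Overall, your version and the paper's prove the same statement, with the paper reusing its own machinery (Lemma~\ref{hg}) while yours is slightly more self-contained.
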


The theorem above was known already in many cases. For relatively hyperbolic groups it essentially follows from the proofs in \cite{Xie}, for mapping class groups see \cite{Iv, Mc}, for $Out(F_n)$ see \cite{KLu} (the ``full'' Tits Alternative is also known \cite{BFH1,BFH2}), for right-angled Artin groups it follows from Tits' original result \cite{Ti} in view of the fact that they are linear \cite{HW, DJ} (and standard facts about $CAT(0)$ spaces), and for graph manifold groups one just needs to use some Bass-Serre theory.

\begin{proof}\emph{[Sketch.]}
Let $g_0\in G$ be a weakly contracting element. If $G$ is not virtually cyclic, then there exists $h_0\notin E(g)$. From Lemma \ref{hg} we see that for some appropriate $N$ we have that $h_1=h_0g_0^N$ is again weakly contracting. We claim that for any sufficiently large $m$, $h=h_1^m$ and $g=g_0^m$ generate a free group on two generators.

If $A_g, A_h$ are axes for $g_0,h_0$ (and hence also axes for $g,h$), then the diameters of $\pi_{A_g}(A_h), \pi_{A_h}(A_g)$ are finite. This follows, for example, from the proof of Lemma \ref{metriconcat}, in particular claim (1) and the description of the projection on $A$.

Suppose just for convenience that there exists $p\in A_g\cap A_h$, which can be arranged by taking neighbourhoods. Up to increasing $m$, the translation distance of any non-trivial power of $g$ on $A_g$ is much larger than the diameter of $\{p\}\cup\pi_{A_g}(A_h)$ and vice versa.

In order to show the non-triviality of any element of the form, e.g., $g^{a_1}h^{b_1}\dots g^{a_k}h^{b_k}$, where $k\geq 1$ and $a_i,b_i\neq 0$, one consider the sequence of points $x_i=g^{a_1}h^{b_1}\dots g^{a_i}h^{b_i}p$ and $y_i=g^{a_1}h^{b_1}\dots g^{a_i}h^{b_i}g^{a_{i+1}}p$. The non-triviality of such element (for $m$ large) follows from the fact that the concatenation of special paths from $x_i$ to $y_i$ and from $y_i$ to $x_{i+1}$ forms a quasi-geodesic (and that the constants do not depend on $m$). This fact can be proven using the same strategy as in Lemma \ref{metriconcat}: $x_i,y_i$ belong to an appropriate translate $A_i$ of $A_g$, and one can show that $\pi_{A_i}(x_j), \pi_{A_i}(y_j)$ are close to $y_i$ for $j>i$ and close to $x_i$ for $j<i$. One can show this inductively, also considering translates of $A_h$ containing $y_j, x_{j+1}$.
\end{proof}

\bibliographystyle{alpha}
\bibliography{contr.bib}

\end{document}